\documentclass[a4paper,12pt,leqno]{amsart}
\usepackage{amsmath,amsthm,amssymb}
\usepackage{mathdots}

\numberwithin{equation}{section}

\theoremstyle{plain}
\newtheorem{theorem}{Theorem}[section]
\newtheorem{proposition}[theorem]{Proposition}

\newtheorem{definition}[theorem]{Definition}

\theoremstyle{definition}

\newtheorem{remark}[theorem]{Remark}

\newcommand{\C}{\mathbb C}
\newcommand{\R}{\mathbb R}
\newcommand{\Z}{\mathbb Z}
\newcommand{\N}{\mathbb N}

\newcommand{\al}{\alpha}

\newcommand{\de}{\delta}

\newcommand{\si}{\sigma}

\newcommand{\eps}{\epsilon}
\newcommand{\Om}{\Omega}
\newcommand{\De}{\Delta}
\renewcommand{\th}{\theta}
\newcommand{\om}{\omega}

\newcommand{\ze}{\zeta}

\DeclareMathOperator{\diag}{diag}

\DeclareMathOperator{\Hom}{Hom}

\DeclareMathOperator{\Fix}{Fix}
\DeclareMathOperator{\id}{id}

\DeclareMathOperator{\Ad}{Ad}

\newcommand{\SL}{\textrm{SL}}
\newcommand{\SO}{\textrm{SO}}
\newcommand{\GL}{\textrm{GL}}
\renewcommand{\sl}{\frak s\frak l}

\newcommand{\m}{\mathfrak{m}}

\newcommand{\cR}{\mathcal{R}}
\newcommand{\cS}{\mathcal{S}}
\newcommand{\cM}{\mathcal{M}}

\newcommand{\cZ}{\mathcal{Z}}
\newcommand{\cT}{\mathcal{T}}
\newcommand{\cG}{\mathcal{G}}

\newcommand{\no}{\noindent}
\newcommand{\sub}{\subseteq}
\newcommand{\st}{\ \vert\ }
\renewcommand{\ll}{\lq\lq}
\newcommand{\rr}{\rq\rq\ }
\newcommand{\rrr}{\rq\rq}

\newcommand{\bp}{\begin{pmatrix}}
\newcommand{\ep}{\end{pmatrix}}
\newcommand{\bsp}{\left(\begin{smallmatrix}}
\newcommand{\esp}{\end{smallmatrix}\right)}

\newcommand{\ttb}{ {t\bar t}  }

\renewcommand{\i}{ {\scriptscriptstyle\sqrt{-1}}\, }
\newcommand{\ii}{ {\scriptstyle\sqrt{-1}}\, }

\newcommand{\Psiz}{  \Psi^{(0)}  }
\newcommand{\Psii}{  \Psi^{(\infty)}  }
\newcommand{\psiz}{  \psi^{(0)}  }
\newcommand{\psii}{  \psi^{(\infty)}  }
\newcommand{\Sz}{  S^{(0)}  }
\newcommand{\Si}{  S^{(\infty)}  }
\newcommand{\Qz}{  Q^{(0)}  }
\newcommand{\Qi}{  Q^{(\infty)}  }

\newcommand{\Omz}{  \Om^{(0)}  }
\newcommand{\Omi}{  \Om^{(\infty)}  }
\newcommand{\thz}{  \th^{(0)}  }

\newcommand{\Mz}{ M^{(0)}  }

\newcommand{\tPsiz}{  \tilde\Psi^{(0)}  }
\newcommand{\tPsii}{  \tilde\Psi^{(\infty)}  }

\newcommand{\tSz}{  \tilde S^{(0)}  }

\newcommand{\tQz}{  \tilde Q^{(0)}  }
\newcommand{\tQi}{  \tilde Q^{(\infty)}  }

\newcommand{\tE}{  \tilde E  }

\newcommand{\tMz}{  \tilde M^{(0)}  }

\newcommand{\tC}{  \tilde C }

\newcommand{\cRz}{ \cR^{(0)}}

\newcommand{\rra}{\rightrightarrows}

\newcommand{\bartQz}{  {\overline{\tilde Q}}^{(0)}  }

\newcommand{\bsi}{\boldsymbol\si}
\newcommand{\bth}{\boldsymbol\th}

\newcommand{\FA}{ F(A) }

\newcommand{\phn}{ \hat\Pi^{(n+1)/2}  }

\newcommand{\tQzr}{  \tilde Q^{(0),\text{rev}}  }
\newcommand{\tSzr}{  \tilde S^{(0),\text{rev}}  }

\usepackage{xcolor}

\newcommand{\cC}{\mathcal{C}}

\newcommand{\bC}{\mathbb{C}}

\newcommand{\fg}{\mathfrak{g}}
\newcommand{\fm}{\mathfrak{m}}

\newcommand{\pr}{\mathrm{pr}}

\newcommand{\Tr}{\mathrm{Tr}}

\newcommand{\Lie}{\mathrm{Lie}}
\newcommand{\reg}{\mathrm{reg}}
\newtheorem{thm}{Theorem}[section]

\newtheorem{lm}[thm]{Lemma}
\newtheorem{rem}[thm]{Remark}
\newtheorem{cor}[thm]{Corollary}
\newtheorem{pro}[thm]{Proposition}
\newtheorem{ex}[thm]{Example}
\newtheorem{df}[thm]{Definition}

\renewcommand{\sl}{\frak s\frak l}

\begin{document}

\title[tt*-Toda and symplectic groupoids]{Geometry of the tt*-Toda equations
\\
I: universal centralizer
\\
and symplectic groupoids
}

\author{Martin A. Guest}
\address{Department of Mathematics, Waseda University, 3-4-1 Okubo, Shinjuku, Tokyo 169-8555, Japan}
\email{martin@waseda.jp}

\author{Nan-Kuo Ho}
\address{Department of Mathematics, National Tsing Hua University, Hsinchu 300, and National Center for Theoretical Sciences, Taipei 106,  Taiwan}
\email{nankuo@math.nthu.edu.tw}

\date{}

\begin{abstract}
We investigate the geometry of a certain space of meromorphic connections with irregular singularities, and prove in particular that it is a (real) symplectic Lie groupoid.
The connections have a physical meaning: they correspond to certain solutions of the topological-antitopological fusion (tt*) equations of Cecotti and Vafa, and hence to deformations of supersymmetric quantum field theories. The groupoid structure
arises because we restrict ourselves to the tt* equations of Toda type, whose monodromy data has a Lie theoretic description. To obtain these results, we show first that the universal centralizer of a Lie group is a holomorphic symplectic groupoid over the Steinberg cross section.\end{abstract}

\subjclass[2000]{
Primary 53D05, 34M40;
Secondary 22E10, 53C43, 81T40
}

\maketitle

\section{Introduction}\label{intro}

We shall describe some geometric aspects of the space of isomonodromic data
of the tt*-Toda equations.  In this article we focus on the equations of $A_n$-type.
These are the system
\begin{equation}\label{ost}
 2(w_i)_{\ttb}=-e^{2(w_{i+1}-w_{i})} + e^{2(w_{i}-w_{i-1})}, \
 w_i:\C^\ast\to\R, \
 i\in\Z
\end{equation}
together with the conditions
$w_i=w_{i+n+1}$ (periodicity),
$w_i=w_i(\vert t\vert)$
(radial condition),
$w_i+w_{n-i}=0$
(\ll anti-symmetry\rrr).
They are a version of the 2D periodic Toda equations, a well known integrable system. They are also an
example of the 2D tt* equations (topological-antitopological fusion equations),  which were introduced by Cecotti and Vafa in \cite{CeVa91} and \cite{CeVa92a} to describe deformations of supersymmetric field theories.
This confluence of ideas makes the tt*-Toda equations a fruitful source of interaction between geometry and physics.

Let us review some of these ideas briefly.
First, it is well known (see \cite{GuLi14})
that the tt*-Toda equations (and indeed any 2D tt* equations) have a \ll zero curvature formulation\rrr, i.e.\ (\ref{ost}) is equivalent to the condition $d\al+\al\wedge\al=0$ for a certain connection $1$-form $\al$, and that this makes (\ref{ost})
also a special case of the Hitchin equations, i.e.\ the equations for a harmonic bundle --- locally, solutions are harmonic maps into the symmetric space $\SL_{n+1}\R/\SO_{n+1}$.
Corresponding to a solution, there is a
meromorphic
Higgs field, which, in our situation, can be described explicitly (see \cite{GIL3}).

Next, it was recognised by Dubrovin (see \cite{Du93}) that, thanks to the radial condition,  the tt*-Toda equations (and indeed any 2D tt* equations)  have, in addition, an \ll isomonodromy formulation\rrr, namely that
(\ref{ost}) is  equivalent to the condition that a certain {\em meromorphic} connection form $\hat\al$ is isomonodromic.  The term \ll monodromy data\rr refers to this formulation. The data consists of a collection of Stokes matrices, formal monodromy matrices, and connection matrices in the sense of o.d.e.\ theory, as described in \cite{GIL3},\cite{GILX}.

These two aspects of the tt* equations have been formalized in the concept of variation of TERP structure, in \cite{He03}.
The Toda aspect of the tt*-Toda equations gives rise to a third, \ll Lie theoretic\rrr, point of view. Namely, from the various symmetries of the Toda equations, it is known (from our previous work  \cite{GH1},\cite{GH2})
that the monodromy data can be expressed entirely in terms of two elements of the Lie group
$\SL_{n+1}\C$, for simplicity denoted in this introduction by $M,E$ (later on by $\tMz,\tE$).

The matrix $M$ represents all the data of the Stokes matrices, and
the matrix $E$ contains the additional data of the connection matrices.  They are constrained by the aforementioned symmetries, and have two important properties: $M$ is a regular element,
and $ME=EM$.
This means that the monodromy data $(E,M)$ is a subspace of the (group theoretic) \ll universal centralizer\rr of
$\SL_{n+1}\C$. Exploiting this fundamental fact is the basis of our approach.

Before stating our results, it is necessary to say a few words about properties of the solutions of (\ref{ost}).
(For more details we refer to \cite{GH17} and
\cite{GIL1}-\cite{GuLi14}.)
A (radial) solution defined on any nonempty open interval $a<\vert t\vert < b$ gives rise to a pair $(E,M)$. We refer to such solutions rather informally as \ll local solutions\rrr.  To use more precise terminology, we could remark that the equations
(\ref{ost})
have the Painlev\'e property, i.e.\ any local solution is the restriction of a \ll meromorphic\rr one.  Thus any local solution is, in a sense, globally defined for $0<\vert t\vert<\infty$, apart from at its poles --- of which there may be infinitely many. However, as we do not consider the location of such poles in this article, we shall not use this property.

We shall investigate the space of all local solutions, more concretely the space of all allowable pairs $(E,M)$, which we denote by
$\cS_{n+1}^{\text{local}}$.  For this purpose we consider the projection map to the second factor, which we denote by
\[
\pi: \cS_{n+1}^{\text{local}}\to \cM_{n+1}^{\text{local}},\quad (E,M) \mapsto M.
\]
Thus $\cM_{n+1}^{\text{local}}$ denotes the space of all allowable Stokes data.
We shall show that $\cM_{n+1}^{\text{local}}$ is a real affine space, and that
$\cS_{n+1}^{\text{local}}$ is a real (algebraic) manifold.

Although the map $\pi$ is not a fibre bundle, it exhibits well some of the analytic properties of solutions.
For example, it is known that for solutions which are smooth on regions of the form $0<\vert t\vert<\epsilon$ (\ll smooth near zero\rrr) the matrix $M$ is restricted to lie in a certain compact region, the subset where all eigenvalues of $M$ lie in $S^1$.
On the other hand, the solutions which are smooth on regions of the form $R<\vert t\vert<\infty$ (\ll smooth near infinity\rrr) all have $E=I$; they are a \ll cross section\rr of the map
$\pi: \cS_{n+1}^{\text{local}}\to \cM_{n+1}^{\text{local}}$.

The solutions of primary interest in \cite{CeVa91} are those which are globally smooth for $0<\vert t\vert<\infty$.  These global solutions have been studied in some detail in
\cite{GIL1}-\cite{GuLi14}, \cite{MoXX}-\cite{Mo14}. It is known that they are given precisely by the intersection of the previous two subspaces, i.e.\ they correspond to a \ll cross section\rr of $\pi$ over the compact region.
The space of such global solutions can, therefore,  be identified with that compact region,
which is in fact a convex polytope.
In the context of the
Hitchin equations these solutions constitute a certain space of harmonic metrics on the trivial bundle over $\C^\ast$ of rank $n+1$ with prescribed asymptotic behaviour at $t=0$ and $t=\infty$.
In the wider context of meromorphic connections and the Painlev\'e property, global solutions could be described as
meromorphic solutions with no poles on the positive real axis. We remark that the global solutions of the tt*-Toda equations for more general Lie algebras have been classified recently by Mochizuki (\cite{MoYY}).

Our first result (Theorem \ref{thm:universalsymplectic})
is that the universal centralizer is a holomorphic symplectic Lie groupoid (in particular this gives a new proof of the fact that it is a complex symplectic manifold).
Our main result (Corollary \ref{cor:result}) is that the space $\cS_{n+1}^{\text{local}}$ is
a real symplectic Lie subgroupoid of the universal centralizer; in particular it is a real
symplectic manifold. As a groupoid,
its space of units can be identified with
$\cM_{n+1}^{\text{local}}$, a real affine space.
As an essential step in the proof we shall show
that $\cS_{n+1}^{\text{local}}$ is the
set of common fixed points of two commuting groupoid involutions.

At the level of the Lie group $\SL_{n+1}\C$ these involutions are of the type used
to define symmetric spaces, and thus quite standard. However,
at the level of the monodromy data (see
Definition \ref{AdF}), they are essentially nonlinear, and we devote most of sections \ref{mon} and \ref{data} to deriving them in a suitable form.
We shall show that the
groupoid approach is ideally suited to dealing with them.

To place our results in context, we conclude by making brief comments on related results in the literature. We intend to discuss these in more detail in subsequent articles, along with generalizations and applications.

First and foremost, we remark that the definition of the
(group theoretic)
universal centralizer depends on a choice of Steinberg cross section of the set of regular conjugacy classes of the Lie group.  This has a Lie algebra theoretic, and perhaps more widely studied, cousin, the Kostant cross section of the set of regular adjoint orbits of the Lie algebra; it is not this, but the
group theoretic version that is needed to describe our monodromy data.
It was introduced by Lusztig, and is
an object of importance in representation theory (see \cite{Lu76}, \cite{Gi18}).
Although the Steinberg and Kostant cross sections are analogous, and have
analogous properties, they do not appear to be directly
related (cf.\ the remarks on this following Theorem 1.5 in \cite{St65}).

The complex symplectic structure of this group theoretic
universal centralizer
has nevertheless been studied by several authors. In \cite{BFM},
Bezrukavnikov, Finkelberg, and Mirkovic remarked that its symplectic structure is a result of
quasi-Hamiltonian reduction from the internal fusion double of \cite{AMM} --- we
comment further on this in Remark \ref{remarkX.X}.
The symplectic structure was also described
by Finkelberg and Tsymbaliuk in \cite{FiTs19}.
In \cite{Ba22}, Balibanu obtains
a symplectic structure
for the universal centralizer
from the point of view of Dirac structures in Poisson geometry.
Our Theorem \ref{thm:universalsymplectic}
gives a new proof, and provides the additional groupoid structure.

On the other hand, moduli spaces of meromorphic connections with irregular singularities over Riemann surfaces have been studied from the gauge theoretic point of view as \ll wild character varieties\rr by
Boalch
(see \cite{Bo01}, \cite{Bo07}),
building on fundamental work of Jimbo-Miwa-Ueno and Atiyah-Bott.  They, and
the corresponding spaces of monodromy data,  have a hyperK\"ahler structure (away from singular points), in particular a holomorphic symplectic structure.   There the monodromy data is described more conventionally in terms of Stokes matrices and connection matrices. Our data $M,E$ determine these matrices, but exist only in the tt*-Toda situation: $M$ is an $(n+1)$-th root of the monodromy, and $E$ is a normalized connection matrix, the normalization requiring knowledge of the global solutions of the tt*-Toda equations.

Concerning groupoids, we emphasize that our groupoid is not related to the \ll fundamental groupoid\rr whose representations were used in \cite{Bo01} to describe monodromy data.  It is closer to, but still quite distinct from, the symplectic double groupoid of \cite{Bo07}, Proposition 7.
We mention that our groupoid $\cS_{n+1}^{\text{local}}$ can be regarded as a subgroupoid of the symplectic groupoid discovered by Bondal (see \cite{Bo04}) in the context of
exceptional sequences of objects in triangulated categories
--- we explain this briefly in Remark \ref{remarkX.B}.

Acknowledgements:
The first author was partially supported by JSPS grants 18H03668 and 23H00083, and the second author was partially supported by MOST grants 109-2628-M-007-006-MY4 and 113-2115-M-007 -004 -MY3.
The authors thank (respectively) the
National Center for Theoretical Sciences and the Fields Institute
for support during their visits.
Both authors are very grateful to Eckhard Meinrenken for his advice regarding symplectic groupoids.
These results were first reported at
the workshop
\ll Hamiltonian Geometry and Quantization\rr
at the Fields Institute in July 2024
and the workshop
\ll Asymptotic Expansion of $\tau$-functions and Related Topics\rr
at the RIMS, Kyoto University, in February 2025.
The authors thank the organisers of these workshops for their hospitality.

\section{Monodromy data}\label{mon}

We begin by reviewing the definition and properties of the monodromy data.
Our primary reference for this is \cite{GH1}, although many of
the details can be found in \cite{GIL1}-\cite{GuLi14}.

\subsection{The connection form $\hat\al$.}\label{hatal}

We define below a connection form $\hat\al$, depending on a complex variable $\ze$,
whose matrix coefficients depend on functions
$w_0(\vert t\vert),\dots,w_n(\vert t\vert)$. We assume in this section that these functions satisfy equation (\ref{ost}) for
$\vert t\vert$ in
some nonempty open interval of the real line,
together with the periodicity, anti-symmetry, and radial conditions stated above.
Given such $w_0,\dots,w_n$, it is known that the monodromy data of the connection
does not depend on $\vert t\vert$ --- in this sense the
connection is isomonodromic.  This  monodromy data does depend on the particular solution $w_0,\dots,w_n$,
however, and it can be expected to contain important information about that solution.

It is also known that the map which assigns to a solution its
monodromy data is injective. The space of \ll all allowable monodromy data\rr is the
space of interest to us in this article --- it is the data which parametrizes solutions in the most intrinsic way.

Let us now review the definition of this data, beginning with the connection form $\hat\al$.
\begin{definition}\label{hatalpha-ze}
$\hat\al(\ze)=
\left[
-\tfrac{1}{\ze^2} W^T - \tfrac{1}{\ze} xw_x + x^2 W
\right]
d\ze$,
where
\[
w=
\bsp
 w_0  & &  \\
  &  \ddots& \\
  &  & w_n
\esp,
W=e^{-w} \Pi e^w,
\Pi=
{\tiny
\left(
\begin{array}{c|ccc}
 & \!1\! & &  \\
  &  &  \!\ddots\! & \\
  &  &   & \!1\!  \\
  \hline
 \!1\! &  &  &
\end{array}
\right)},
\ x=\vert t\vert,
\]
and $W^T$ denotes the transpose of $W$.
All matrices here are $n+1\times n+1$ matrices, and $\ze\in\C^\ast$.
\end{definition}
The equation for parallel sections of the connection $\nabla=d-\hat\al^T$ (the
dual connection to $d+\hat\al$) is
\begin{equation}\label{psizeta}
\Psi_\ze=
\left[
-\tfrac{1}{\ze^2} W - \tfrac{1}{\ze} xw_x + x^2 W^T
\right]
\Psi.
\end{equation}
This is a meromorphic o.d.e.\ which
has poles of order $2$ at $\ze=0$ and $\ze=\infty$.
We shall describe its monodromy data in the sense of classical o.d.e.\ theory (as
defined, for example, in \cite{FIKN06}).

\subsection{The Stokes matrices.}\label{stokes}

The Stokes matrices at $\ze=0$ are defined as in \cite{GH1} and \cite{GILX}, in the following way.
Equation (\ref{psizeta}) has a unique formal solution of the form
\[
\textstyle
\Psiz_f(\zeta) =  e^{-w}\, \Omega \left( I + \sum_{k\ge 1} \psiz_k \zeta^k \right) e^{\frac1\zeta  d_{n+1}},
\]
where
$
\Pi=(\de_{i,i+1})_{0\le i\le n}$ (as above) and
$\Omega=(\omega^{ij})_{0\le i,j\le n}$, $\om=e^{{2\pi \i}/{(n+1)}}$.
The formal monodromy is trivial --- see Appendix A of \cite{GH2}.
We refer to Appendix A of \cite{GIL2} for some properties of $\Pi$ and $\Om$,
the most important of which is that $\Om$ diagonalizes $\Pi$, i.e.\
$\Pi=\Om d_{n+1} \Om^{-1}$ where
$d_{n+1}=\diag(1,\om,\dots,\om^n)$.
We
have Stokes sectors
\[
\Omz_k
=\{ \zeta\in\tilde\C^\ast \st
\thz_k-\tfrac{\pi}{2}<\text{arg}\zeta <
\thz_{k-\scriptstyle\frac{1}{n+1}}+
\tfrac\pi2\}
\]
where
\[
\thz_k=
\begin{cases}
-\tfrac{1}{n+1}\pi - (k-1)\pi \quad\text{if $n$ is odd}
\\
-\tfrac{1}{2(n+1)}\pi - (k-1)\pi  \quad\text{if $n$ is even}
\end{cases}
\]
for $k\in\tfrac1{n+1}\Z$.
(The rays with angles $\thz_k$ are called singular directions.)
Each Stokes sector  $\Omz_k$ at $\ze=0$ in the $\ze$-plane supports a
holomorphic solution
$\Psiz_k$
which is uniquely characterized by the property
$\Psiz_k(\ze)\sim\Psiz_f(\ze)$
as $\zeta\to0$ in $\Omz_k$.

In terms of these choices, we define Stokes factors $\Qz_k$ by
\[
\Psiz_{k+\scriptstyle\frac1{n+1}} = \Psiz_k \Qz_k.
\]
The intersection
$
\Omz_k
\cap
\Omz_{k+\scriptstyle\frac1{n+1}}
$
of successive Stokes sectors
is the sector of width $\pi$ bisected by the singular direction $\thz_k$, so
one can regard
the Stokes factors as indexed by these $\thz_k$.
For $k\in\Z$ we define Stokes matrices
$\Sz_k$ by
\[
\Psiz_{k+1}=\Psiz_k \Sz_k.
\]
Thus
$
\Sz_k=\Qz_{k}\Qz_{k+\scriptstyle\frac1{n+1}}\Qz_{k+\scriptstyle\frac2{n+1}}\dots
\Qz_{k+\scriptstyle\frac{n}{n+1}}.
$
The monodromy of the solution $\Psiz_k$ is given by
$
\Psiz_k(e^{2\pi\i}\ze)=\Psiz_k(\ze) \Sz_k\Sz_{k+1}
$
(see section 4 of \cite{GILX}).

Similarly, at $\ze=\infty$, starting from the
formal solution
\[
\Psii_f(\zeta) =
\textstyle
e^w \Om^{-1} \left( I + \sum_{k\ge 1} \psii_k \zeta^{-k} \right) e^{x^2 \zeta  d_{n+1}},
\]
we obtain $\Psii_k$ and $\Qi_k$, $\Si_k$.

The Stokes factors satisfy various algebraic identities, which arise from the
various \ll symmetries\rr of the connection form $\hat\al$.
To express these, we shall use the following Lie algebra automorphisms of $\sl_{n+1}\C$
\[
\tau(X)=d_{n+1}^{-1} X d_{n+1},\ \
\si(X)=-\De\, X^T\De,\ \
c(X)=\De \bar X  \De,\ \
\th(X)=\bar X
\]
where
$\De=(\de_{i,n-i})$ is the anti-diagonal matrix with $1$ in
positions $(i,n-i)$, $0\le i\le n$, and $0$ elsewhere.
Thus $\si,c,\th$ are involutions, while $\tau$ has order $n+1$.

With this notation, the symmetries of the connection form $\hat\al$ are:

\noindent{\em Cyclic symmetry: }  $\tau(\hat\alpha(\zeta))=\hat\alpha(\om \zeta)$

\noindent{\em Anti-symmetry: }  $\sigma(\hat\alpha(\zeta))=\hat\alpha(-\ze)$

\noindent{\em $c$-reality: }  $c(\hat\alpha(\zeta))=\hat\alpha(1/(x^2\bar\ze\vphantom{\tfrac{a}{a}}))$

\noindent {\em $\theta$-reality: } $\th(\hat{\alpha}(\ze))=\hat{\alpha}(\bar{\ze})$.

\no These are easily verified from Definition \ref{hatalpha-ze}.

The $c$-reality condition allows us to express the matrices $\Qi_k$ in terms of the matrices $\Qz_k$ (section 4 of \cite{GILX}, Lemma 2.4 of \cite{GIL2}),
so we shall ignore $\Qi_k$.  The cyclic symmetry
leads to the following important formula:
\begin{proposition}\label{tauM}
(cf.\ section 2 of \cite{GIL2})
$\Qz_{k+\scriptstyle\frac2{n+1}} = \Pi\  \Qz_k \  \Pi^{-1}$.
\qed
\end{proposition}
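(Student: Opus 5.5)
Our plan is to transport solutions by the cyclic symmetry and then use uniqueness of the solutions characterized by their asymptotics. The cyclic symmetry $\tau(\hat\al(\ze))=\hat\al(\om\ze)$ means $d_{n+1}^{-1}\hat\al(\ze)d_{n+1}=\hat\al(\om\ze)$. Taking transposes, the coefficient $A(\ze)$ of the equation $\Psi_\ze=A(\ze)\Psi$ in (\ref{psizeta}) satisfies $d_{n+1}^{-1}A(\ze)d_{n+1}=\om A(\om\ze)$, since $d\ze$ is rescaled. Hence, if $\Psi(\ze)$ solves (\ref{psizeta}), so does
\[
\hat\Psi(\ze)=d_{n+1}\,\Psi(\om\ze).
\]
Because $d_{n+1}$ and $\Pi$ do not commute but are related through $\Om$, we expect the factor $\Pi$ to come from re-expressing the formal solution.

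\textbf{Step 1: the formal solution.} We compute $d_{n+1}\Psiz_f(\om\ze)$. The exponential factor becomes $e^{\frac{1}{\om\ze}d_{n+1}}=e^{\frac1\ze \om^{-1}d_{n+1}}$. Since $\om^{-1}d_{n+1}=\Pi^{-1}d_{n+1}\Pi$ up to the cyclic shift of diagonal entries, this equals $\Pi^{-1}e^{\frac1\ze d_{n+1}}\Pi$; the exact orientation of $\Pi^{\pm1}$ will be checked from $\Pi=(\de_{i,i+1})$. For the prefactor, $d_{n+1}$ commutes with $e^{-w}$, and one has $d_{n+1}\Om=\Om\,\Pi^{\mp1}$, since multiplying row $i$ by $\om^i$ shifts the columns of $\Om$. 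The series part becomes $I+\sum_k \om^k\psiz_k\ze^k$. We then use uniqueness of the formal solution of the prescribed shape. The transformed series, conjugated by $\Pi$, gives a formal solution of the normalized form, so
\[
d_{n+1}\Psiz_f(\om\ze)=\Psiz_f(\ze)\,\Pi^{\pm1}.
\]

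\textbf{Step 2: sectors.} Rotation $\ze\mapsto\om\ze$ shifts arguments by $2\pi/(n+1)$. From the formula for $\thz_k$, we have $\thz_{k+\frac2{n+1}}=\thz_k-\frac{2\pi}{n+1}$. Hence $\om\,\Omz_{k+\frac{2}{n+1}}=\Omz_k$. The function $d_{n+1}\Psiz_k(\om\ze)\Pi^{\mp1}$ is then holomorphic on $\Omz_{k+\frac2{n+1}}$ and asymptotic to $\Psiz_f$ there. By the uniqueness of $\Psiz_{k+\frac2{n+1}}$,
\[
\Psiz_{k+\frac2{n+1}}(\ze)=d_{n+1}\Psiz_k(\om\ze)\Pi^{\mp1}.
\]
Applying the same identity with $k$ replaced by $k+\frac1{n+1}$ and inserting both into the definition of the Stokes factors gives
\[
\Psiz_{k+\frac{3}{n+1}}=\Psiz_{k+\frac{2}{n+1}}\,\Pi^{\pm1}\Qz_k\Pi^{\mp1}.
\]
This yields the formula once the sign is confirmed to be $\Pi\,\Qz_k\,\Pi^{-1}$.

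The main obstacle is bookkeeping. We must fix the correct sign and direction of the rotation (whether $\om$ or $\om^{-1}$), the corresponding sector shift of $+\frac2{n+1}$, and the side of the conjugation by $\Pi$. We will do this by checking the explicit identity $\Pi=\Om d_{n+1}\Om^{-1}$ and the shift relation $d_{n+1}\Om=\Om\Pi^{-1}$ (or its inverse) entrywise.
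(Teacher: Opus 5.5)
Your strategy is the right one: transport solutions by the cyclic symmetry, match the formal solution, then use uniqueness of the sectorial solutions. It is the argument of section 2 of \cite{GIL2} that the paper quotes. However, your first concrete step is wrong, and it is exactly the step that decides the answer. Write $\hat\al(\ze)=a(\ze)\,d\ze$ with $A=a^T$. The cyclic symmetry reads $d_{n+1}^{-1}a(\ze)d_{n+1}=\om\,a(\om\ze)$, and transposing (with $d_{n+1}$ diagonal) gives
\[
d_{n+1}A(\ze)d_{n+1}^{-1}=\om A(\om\ze).
\]
It does not give $d_{n+1}^{-1}A(\ze)d_{n+1}=\om A(\om\ze)$. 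That relation is false once $n\ge2$: its $\ze^{-2}$-coefficients are $-\om W$ and $-\om^{-1}W$ respectively. So the solution-preserving map is $\Psi(\ze)\mapsto d_{n+1}^{-1}\Psi(\om\ze)$, and $d_{n+1}\Psi(\om\ze)$ is not a solution.

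Accordingly your Step 1 cannot close. Using $d_{n+1}\Om=\Om\Pi^{-1}$ and $e^{\frac1{\om\ze}d_{n+1}}=\Pi^{-1}e^{\frac1\ze d_{n+1}}\Pi$, you get $e^{-w}\Om\,\Pi^{-1}(I+\cdots)\Pi^{-1}e^{\frac1\ze d_{n+1}}\Pi$. Its leading factor $\Pi^{-2}$ cannot be absorbed into the normalized form. Nor is the unresolved $\pm$ mere bookkeeping: the two orientations give $\Pi\Qz_k\Pi^{-1}$ and $\Pi^{-1}\Qz_k\Pi$ respectively, which differ for $n\ge2$. As written, your last line presupposes the conclusion.

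The repair is short. Both $d_{n+1}^{-1}\Om$ and $\Om\Pi$ have $(i,j)$ entry $\om^{i(j-1)}$, so $d_{n+1}^{-1}\Om=\Om\Pi$. Also $\Pi^{-1}d_{n+1}\Pi=\om^{-1}d_{n+1}$ holds exactly. Hence
\[
d_{n+1}^{-1}\Psiz_f(\om\ze)=e^{-w}\Om\Bigl(I+\sum_{k\ge1}\om^k\,\Pi\psiz_k\Pi^{-1}\ze^k\Bigr)e^{\frac1\ze d_{n+1}}\,\Pi ,
\]
and uniqueness of the formal solution gives $d_{n+1}^{-1}\Psiz_f(\om\ze)=\Psiz_f(\ze)\Pi$. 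Your sector computation is correct: $\ze\in\Omz_{k+\frac2{n+1}}$ if and only if $\om\ze\in\Omz_k$, for either parity of $n$. Uniqueness of the sectorial solutions therefore gives $\Psiz_{k+\frac2{n+1}}(\ze)=d_{n+1}^{-1}\Psiz_k(\om\ze)\Pi^{-1}$ for all $k$, and hence
\[
\Psiz_{k+\frac3{n+1}}(\ze)=d_{n+1}^{-1}\Psiz_k(\om\ze)\,\Qz_k\,\Pi^{-1}=\Psiz_{k+\frac2{n+1}}(\ze)\,\Pi\,\Qz_k\,\Pi^{-1}.
\]
This is $\Qz_{k+\frac2{n+1}}=\Pi\Qz_k\Pi^{-1}$. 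Equivalently, you could use $\Psi(\ze)\mapsto d_{n+1}\Psi(\om^{-1}\ze)$, which shifts the sector index by $-\frac2{n+1}$.
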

Hence all Stokes factors are determined by any two consecutive
Stokes factors. It follows that
the monodromy matrix of $\Psiz_1$ can be rewritten as
\[
\Sz_1 \Sz_2 = (\Qz_1\Qz_{1+\scriptstyle\frac1{n+1}}\Pi)^{n+1}.
\]
In view of this, we introduce the following
notation:

\begin{definition}\label{matrixM}
$\Mz=\Qz_1\Qz_{1+\scriptstyle\frac1{n+1}}\Pi$.
\end{definition}

It is known (see section \ref{steinberg}) that the matrix $\Mz$ determines the individual matrices
$\Qz_1$ and $\Qz_{1+\scriptstyle\frac1{n+1}}$ (and hence, by the cyclic
symmetry formula, all $\Qz_k$). Thus the single matrix
$\Mz$ represents all the Stokes data at $\ze=0$.

A special feature of the
$A_n$-type tt*-Toda equations is that it is possible to
choose formal solutions so that all Stokes factors become
real matrices.
With this in mind
we define modified formal solutions at $\ze=0$ by
\[
\tPsiz_f(\zeta) =
\begin{cases}
\Psiz_f(\zeta)  \, d_{n+1}^{\frac12} \quad\text{if $n$ is odd}
\\
\Psiz_f(\ze) d_{n+1}^{-\frac12n} \quad\text{if $n$ is even}
\end{cases}
\]
where
$d_{n+1}^{\frac12}=\diag(1,\omega^{\frac12},\omega^{1},\dots,\omega^{\frac{n}2})$.
We obtain $\tPsiz_k$ and
$\tQz_k,\tSz_k$ as in the case of $\Psiz_k$.  (The reality property will be explained
later --- for the moment all Stokes factors are regarded as complex matrices).

From this definition we obtain
\begin{equation}\label{tQz}
\tQz_k=
\begin{cases}
d_{n+1}^{-\frac12} \Qz_k d_{n+1}^{\frac12} \quad\text{if $n$ is odd}
\\
 (d_{n+1})^{\frac n2} \Qz_k (d_{n+1})^{-\frac n2} \quad\text{if $n$ is even}
\end{cases}
\end{equation}
and then Proposition \ref{tauM} gives:

\begin{proposition}\label{tauMtilde} The cyclic symmetry property of $\tQz_k$ is
\[
\tQz_{k+\scriptstyle\frac2{n+1}} =
\begin{cases}
\hat\Pi \tQz_k \hat\Pi^{-1}
\quad\text{if $n$ is odd}
\\
\Pi \tQz_{k} \Pi^{-1}\quad\text{if $n$ is even}
\end{cases}
\]
where
$\hat\Pi
=\diag(1,\dots,1,-1)\Pi$.
\qed
\end{proposition}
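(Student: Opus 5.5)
The plan is to obtain the statement directly from Proposition \ref{tauM} by conjugating with the diagonal matrix that appears in (\ref{tQz}). Write $D=d_{n+1}^{-\frac12}$ when $n$ is odd and $D=d_{n+1}^{\frac n2}$ when $n$ is even, so that $\tQz_k=D\Qz_kD^{-1}$ for all $k$. Proposition \ref{tauM} then gives
\[
\tQz_{k+\scriptstyle\frac2{n+1}}=D\Pi\Qz_k\Pi^{-1}D^{-1}
=(D\Pi D^{-1})\,\tQz_k\,(D\Pi D^{-1})^{-1}.
\]
The whole proof therefore reduces to computing $D\Pi D^{-1}$. It is enough to identify this matrix up to a nonzero scalar, because a scalar factor cancels under conjugation.

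Write $D=\diag(\delta_0,\dots,\delta_n)$. Since $\Pi$ has entries $1$ in positions $(i,i+1)$ for $0\le i\le n-1$ and in position $(n,0)$, the matrix $D\Pi D^{-1}$ has the same shape. Its entries are $\delta_i\delta_{i+1}^{-1}$ in position $(i,i+1)$ and $\delta_n\delta_0^{-1}$ in position $(n,0)$.

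For $n$ odd we have $\delta_j=\om^{-j/2}$. Each superdiagonal entry is then $\om^{1/2}$, and the corner entry is $\om^{-n/2}$. The ratio of the corner entry to the superdiagonal entries is $\om^{-(n+1)/2}=e^{-\pi\i}=-1$. Hence $D\Pi D^{-1}=\om^{1/2}\diag(1,\dots,1,-1)\Pi=\om^{1/2}\hat\Pi$, and the scalar $\om^{1/2}$ cancels in the conjugation.

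For $n$ even we have $\delta_j=\om^{jn/2}$, where $n/2$ is an integer. Each superdiagonal entry is $\om^{-n/2}$, and the corner entry is $\om^{n^2/2}$. Their ratio is $\om^{n(n+1)/2}=e^{\pi\i n}=1$, since $n$ is even. Hence $D\Pi D^{-1}=\om^{-n/2}\Pi$, and the conjugation is by $\Pi$ itself.

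The only point needing care is the sign bookkeeping for the half-integer powers of $\om$ in the odd case. One must use the branch fixed in the definition $d_{n+1}^{\frac12}=\diag(1,\om^{\frac12},\dots,\om^{\frac n2})$, so that $\om^{-n/2}\cdot\om^{-1/2}$ really equals $e^{-\pi\i}$. This choice is exactly what produces the $-1$ in $\hat\Pi$. Everything else is routine.
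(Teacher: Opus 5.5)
Your proposal is correct and follows the paper's own route: the paper obtains the statement by conjugating Proposition~\ref{tauM} with the diagonal matrix in (\ref{tQz}). Your explicit computation, $D\Pi D^{-1}=\omega^{1/2}\hat\Pi$ for $n$ odd and $D\Pi D^{-1}=\omega^{-n/2}\Pi$ for $n$ even, fills in the step the paper leaves to the reader, and the scalars cancel under conjugation as you note.
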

As a tilde-analogue of $\Mz$ we introduce
\begin{equation}\label{tMz}
\tMz=
\begin{cases}
\tQz_1\tQz_{1+\scriptstyle\frac1{n+1}}\hat\Pi
\quad\text{if $n$ is odd}
\\
\tQz_1\tQz_{1+\scriptstyle\frac1{n+1}}
\Pi
\quad\text{if $n$ is even.}
\end{cases}
\end{equation}
It follows that
\begin{equation}\label{tMz^n+1}
(\tMz)^{n+1}=
\begin{cases}
 -\tSz_1 \tSz_2 \quad\text{if $n$ is odd}
\\
\ \ \tSz_1 \tSz_2 \quad\text{if $n$ is even.}
\end{cases}
\end{equation}
Let us write the characteristic polynomial of $\tMz$ as
\[
\sum_{i=0}^{n+1} s_i \mu^{i} \quad\text{if $n$ is odd},
\quad
\sum_{i=0}^{n+1} (-1)^is_i \mu^{i} \quad\text{if $n$ is even.}
\]
We refer to $s_1,\dots,s_n$ as the \ll Stokes parameters\rrr. Their role will be made
more explicit in the next section.

\subsection{The Steinberg cross section.}\label{steinberg}

We have obtained the Stokes parameters $s_1,\dots,s_n$ from $\tMz$.
Conversely, it is possible to recover $\tMz$ from  $s_1,\dots,s_n$.
The proof of this fact exploits the concept of \ll Steinberg cross section\rrr.
To explain this we need to quote further results from
\cite{GH1}.

First, it was shown in \cite{GH1} that $\tQz_k$ belongs to the unipotent subgroup of $\SL_{n+1}\C$ corresponding to a certain subset $\cRz_k\sub \De$ of the set of roots
$\De=\{ x_i-x_j\st 0\le i\ne j \le n \}$ (with respect to the standard Cartan subalgebra
consisting of matrices $\diag(x_0,\dots,x_n)$).  The roots in the subsets
$\cRz_1$, $\cRz_{1+\scriptstyle\frac1{n+1}}$
are listed in Proposition 3.4 and Proposition 3.9 of \cite{GH1} (we remark that
$\cRz_{k}$ was denoted by
$\cR  (\th_{k})$
in  \cite{GH1}).

The expression for $\tMz$ in formula (\ref{tMz}) motivates the next definition (Definitions 3.6 and 3.11 of \cite{GH1}). It is the space of all (complex) matrices having the shape of $\tMz$.

\begin{definition}\label{scriptM}
We define a subspace $\cM_{n+1}$ of $\SL_{n+1}\C$ by
\[
\cM_{n+1}=
\begin{cases}
\Bigl(
\Pi_{\al\in\cRz_1} U_\al
\Bigr)
\Bigl(
\Pi_{\al\in\cRz_{1+\scriptstyle\frac 1{n+1}}} U_\al
\Bigr)
\hat\Pi \quad\text{if $n$ is odd}
\\
\Bigl(
\Pi_{\al\in\cRz_1} U_\al
\Bigr)
\Bigl(
\Pi_{\al\in\cRz_{1+\scriptstyle\frac 1{n+1}}} U_\al
\Bigr)
\Pi \quad\text{if $n$ is even}
\end{cases}
\]
where $U_{x_i-x_j}= \exp(\C E_{i,j})$,
and $E_{i,j}$ is the matrix with $1$ in position $(i,j)$ and $0$ elsewhere.
\end{definition}

The following result was proved in section 4 of \cite{GH1},
based on work of Steinberg (\cite{St65}, \cite{St74}).

\begin{proposition}\label{stein}
$\cM_{n+1}$ is a Steinberg
cross section of the set of regular conjugacy classes of $\SL_{n+1}\C$.
The map which assigns to an element of $\cM_{n+1}$ the coefficients of
its characteristic polynomial
gives an isomorphism $\cM_{n+1}\cong \C^n$.
\qed
\end{proposition}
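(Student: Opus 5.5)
The plan is to reduce the statement to Steinberg's theorem on cross sections (\cite{St65}, Theorem 1.4 and Section 7). Steinberg shows the following for a simply connected semisimple group $G$ with simple roots $\al_1,\dots,\al_n$, where $n_i$ denotes a representative in $N(T)$ of the simple reflection $s_i$. For any ordering of the simple roots, the set $N=\prod_i U_{\al_i} n_i$ meets each regular conjugacy class exactly once, transversally, and the map $G\to \C^n$ given by the fundamental characters restricts to an isomorphism $N\cong\C^n$. More generally, Steinberg's argument allows the following. Take any Coxeter element $w=s_{i_1}\cdots s_{i_n}$ with representative $\dot w$, and let $U_w=U\cap \dot w U^-\dot w^{-1}$, the product of the root subgroups $U_\al$ for $\al>0$ with $w^{-1}\al<0$. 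Then $U_w\dot w$ is again a cross section. For $\SL_{n+1}\C$ the fundamental characters are the traces on $\Lambda^k\C^{n+1}$, i.e.\ up to sign the coefficients of the characteristic polynomial. So the second assertion of Proposition~\ref{stein} follows from the first.

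The main step is therefore to exhibit $\cM_{n+1}$ in the form $g\,(U_w\dot w)\,g^{-1}$, or as a variant of it, for a suitable Coxeter element $w$ and some fixed $g\in\SL_{n+1}\C$ (possibly a permutation matrix, or a diagonal matrix built from powers of $d_{n+1}^{1/2}$). The steps are as follows.

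First, I would note that $\Pi$ (for $n$ even) and $\hat\Pi$ (for $n$ odd) are monomial matrices of determinant $1$. Both represent the $(n+1)$-cycle in $S_{n+1}=W$, which is a Coxeter element. This is the reason for the sign in $\hat\Pi$.

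Second, from the explicit root lists of $\cRz_1$ and $\cRz_{1+\frac1{n+1}}$ (Propositions 3.4 and 3.9 of \cite{GH1}), I would check three things: their union consists of exactly $n$ roots; the two sets are each "closed" so that the products are groups in any order; and after conjugating by a suitable permutation $g$ (chosen so that the cycle becomes a standard Coxeter element $s_{i_1}\cdots s_{i_n}$), the union becomes exactly the inversion set $\{\al>0: w^{-1}\al<0\}$. Here I expect the roots to split into two families $\{x_i-x_j\}$ of "anti-diagonal" type, mirroring the bipartite Coxeter element $w=w_{\text{even}}w_{\text{odd}}$, each factor being a product of commuting reflections. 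I would therefore take $w$ to be this bipartite Coxeter element, so that $\cRz_1$ and $\cRz_{1+\frac1{n+1}}$ correspond to the inversion sets of the two commuting halves. The condition that the product of root groups, followed by $\dot w$, equals $U_w\dot w$ then becomes the standard factorization $U_w=U_{w_1}\cdot w_1U_{w_2}w_1^{-1}$.

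Finally, I would invoke Steinberg's theorem for $U_w\dot w$. Conjugation by $g$ preserves regular conjugacy classes and characteristic polynomials, so transversality and the isomorphism with $\C^n$ both transfer to $\cM_{n+1}$.

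The main obstacle is the combinatorial identification in the second step, where the specific root sets from \cite{GH1}, which depend on the parity of $n$, must be matched to a Coxeter inversion set. If the matching proves awkward, the fallback is direct. I would show that the map $\cM_{n+1}\to\C^n$ sending an element to its characteristic polynomial coefficients is a bijective polynomial map with polynomial inverse. This can be done by writing the element as a companion-like matrix, namely a cyclic monomial matrix perturbed by the $n$ free parameters, and computing its characteristic polynomial triangularly, with each coefficient $s_k$ depending affinely on one new parameter. Every element is then regular, because a matrix of companion type has a cyclic vector. Together with bijectivity onto characteristic polynomials, this gives exactly one point in each regular class.
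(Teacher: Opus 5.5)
Your proposal is correct and follows the same route as the paper, which cites \cite{GH1}, Section 4, and Remark \ref{coxeter}: $\cM_{n+1}$ is recognized as Steinberg's section $U_w\dot w$ for the Coxeter element represented by $\hat\Pi$ or $\Pi$, with $\cRz_1\cup\cRz_{1+\frac{1}{n+1}}$ as the relevant inversion set, and the isomorphism with $\C^n$ then comes from Steinberg's theorem via the fundamental characters. The only cosmetic difference is that the paper works directly with the positive system $\cRz_1\cup\dots\cup\cRz_{1+\frac{n}{n+1}}$, whose simple roots $\cRz_1\cup\cRz_{1+\frac{n}{n+1}}$ give exactly the bipartite splitting you anticipated, so no conjugation by a permutation $g$ is needed.
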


That is, each element of $\cM_{n+1}$ is regular (its minimal polynomial and characteristic polynomial coincide), and each conjugacy class in $\SL_{n+1}\C$ contains precisely one element of $\cM_{n+1}$.
More
precisely, it is the  Steinberg
cross section corresponding to the choice of simple roots
$\cRz_1\cup \cRz_{1+\scriptstyle\frac n{n+1}}$.  The  set of positive roots corresponding to these simple roots
is $\cRz_1\cup \dots\cup \cRz_{1+\scriptstyle\frac n{n+1}}$.

\begin{remark}\label{coxeter}
The set $\cRz_1\cup \cRz_{1+\scriptstyle\frac 1{n+1}}$ which
appears in Definition \ref{scriptM} may then be described
(see Corollary 5.6 of \cite{GH2})  as the subset of
the positive roots
$\cRz_1\cup \dots\cup \cRz_{1+\scriptstyle\frac n{n+1}}$
consisting of those
which become negative under the action of the
Coxeter element, the latter being represented by $\hat\Pi$ (if $n$ is odd) or $\Pi$  (if $n$ is even).
This action is given on roots by $x_i-x_j\mapsto x_{\de\cdot i}-x_{\de\cdot j}$
where $\de$ is the cyclic permutation $(n \, n-1 \dots 1\, 0)$.
All these facts were established (for any complex simple Lie group) in
\cite{GH2}, where it was shown that the Coxeter Plane provides a
convenient way of visualizing them.
\qed
\end{remark}

By construction, we have $\tMz\in\cM_{n+1}$. By Proposition \ref{stein} any
element of $\cM_{n+1}$ is of the form $\tMz$, for some $s_1,\dots,s_n\in\C$.
In particular the map
\[
\cM_{n+1}\to \C^n, \quad \tMz\mapsto (s_1,\dots,s_n)
\]
is a
bijection, so $\cM_{n+1}$ is an affine space isomorphic to $\C^n$.
It follows from the root theoretic description of $\tQz_k$
(and Definition \ref{matrixM})  that
$\tMz$ determines the individual matrices
$\Qz_1$ and $\Qz_{1+\scriptstyle\frac1{n+1}}$,
as asserted earlier.

In fact (section 6 of \cite{GILX})
there is an
explicit formula for the $\tQz_k$(and hence for $\tMz$) in terms
of $s_1,\dots,s_n$:

\begin{proposition}\label{Qfroms}
Let $E_{i,j}$ be the matrix with $1$ in position $(i,j)$ and $0$
elsewhere, where $0\le i,j\le n$.
Then
$
\tQz_k=
I + \sum_{x_i-x_j\in \cRz_k}  \, s_{i,j} E_{i,j}$,
where
\[
s_{i,j}=
\begin{cases}
s_{j-i}  \quad\text{if $i<j$}
\\
-s_{j-i}  \quad\text{if $i>j$}
\end{cases}
\quad\text{if $n$ is odd,}
\]
and
\[
s_{i,j}=
\begin{cases}
(-1)^{j-i} s_{j-i} \ \text{if $i<j$}
\\
-(-1)^{j-i} s_{j-i} \ \text{if $i>j$}
\end{cases}
\quad\text{if $n$ is even.}
\]
Here the $j-i$ in $s_{j-i}$ is interpreted mod $n+1$.
\qed
\end{proposition}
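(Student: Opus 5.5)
The strategy is to reduce the formula to a statement about only two consecutive Stokes factors, $\tQz_1$ and $\tQz_{1+\scriptstyle\frac1{n+1}}$, and then use the Steinberg cross section to identify their entries with the Stokes parameters.

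\emph{Reduction to two factors.} Proposition \ref{tauMtilde} gives
$\tQz_{k+\scriptstyle\frac2{n+1}}=P\tQz_kP^{-1}$, with $P=\hat\Pi$ ($n$ odd) or $P=\Pi$ ($n$ even). Conjugation by $\Pi$ sends $E_{i,j}$ to $E_{i-1,j-1}$, with indices mod $n+1$. Conjugation by $\hat\Pi$ does the same, but introduces a sign $-1$ exactly when the shift makes the index pair cross the boundary between $0$ and $n$.

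So it suffices to do two things. First, check that the claimed formula is stable under this conjugation, given that $\cRz_{k+\scriptstyle\frac2{n+1}}$ is the shift of $\cRz_k$ (which is how the root sets were described in \cite{GH1}). Second, prove the formula for $k=1$ and $k=1+\tfrac1{n+1}$.

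The stability check works as follows. $s_{j-i}$ depends only on $j-i$ mod $n+1$, so the shift preserves it. The sign rule ($+$ for $i<j$, $-$ for $i>j$) changes precisely when the pair wraps around the boundary. For $n$ odd, that flip matches the $-1$ coming from $\hat\Pi$.

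For $n$ even, $\Pi$ gives no sign, and I expect this to be the main obstacle. Here the extra factor $(-1)^{j-i}$ has to absorb the flip. When $i<j$ wraps to $i'>j'$, we have $j'-i'=j-i-(n+1)$. Since $n+1$ is odd, $(-1)^{j'-i'}=-(-1)^{j-i}$, and this cancels the change in the $\pm$ prefactor. The bookkeeping of how exactly $j-i$ is read mod $n+1$ inside the exponent needs care, and I would fix the convention so that $j-i$ is taken as an integer difference in that exponent.

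\emph{The two base cases.} By Proposition \ref{stein}, an element of $\cM_{n+1}$ is determined by its characteristic polynomial. So I would write
$$\tQz_1=I+\sum_{\cRz_1} a_{i,j}E_{i,j},\qquad \tQz_{1+\scriptstyle\frac1{n+1}}=I+\sum_{\cRz_{1+\scriptstyle\frac1{n+1}}} a_{i,j}E_{i,j},$$
noting that within each $\cRz_k$ the root groups commute, which I would verify from the explicit lists. I would then compute the characteristic polynomial of $\tMz=\tQz_1\tQz_{1+\scriptstyle\frac1{n+1}}\hat\Pi$ (or with $\Pi$).

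The roots in $\cRz_1\cup\cRz_{1+\scriptstyle\frac1{n+1}}$ have distinct differences $j-i$ mod $n+1$. Each coefficient of the characteristic polynomial should therefore pick up, linearly and up to sign, exactly one $a_{i,j}$, namely the one with $j-i$ equal to the index of that coefficient. This linearity is a companion-matrix type phenomenon for Steinberg/Coxeter cross sections: the product $\tQz_1\tQz_{1+\scriptstyle\frac1{n+1}}P$ is conjugate, via a permutation, to a companion-like matrix. The cleanest route is to exhibit that permutation explicitly and read off $\det(\mu I-\tMz)$ by cofactor expansion along the cyclic structure.

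Matching against the defining normalisation $\sum s_i\mu^i$ (or $\sum(-1)^is_i\mu^i$) then gives $a_{i,j}=s_{i,j}$. The alternative would be to invoke the reality/symmetry relations of \cite{GILX}, section 6, directly, but I would attempt the self-contained determinant computation.
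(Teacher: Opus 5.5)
Your reduction to $k=1$ and $k=1+\frac1{n+1}$ is correct, including the sign bookkeeping. Conjugation by $\hat\Pi$ produces a $-1$ exactly when one of $i,j$ is $0$, which is exactly when the order of $i,j$ flips. For $n$ even the formula just says $s_{i,j}=(-1)^m s_m$, with $m\in\{1,\dots,n\}$ the residue of $j-i$, and this is visibly invariant under $E_{i,j}\mapsto E_{i-1,j-1}$. (The paper gives no argument here; it quotes the formula from \cite{GILX}, \S6.)

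The gap is the base case, which carries the whole content of the proposition. The inference ``the differences $j-i$ are distinct mod $n+1$, so each coefficient of the characteristic polynomial is linear in exactly one $a_{i,j}$'' does not follow. Conjugating by $\diag(1,\lambda,\dots,\lambda^n)$ with $\lambda^{n+1}=1$ multiplies $\Pi$ and $\hat\Pi$ by $\lambda^{-1}$, so distinctness only gives a $\Z/(n+1)$-grading: each monomial in the coefficient of $\mu^m$ has total weight $\equiv m$, where $a_{i,j}$ has weight $j-i$. That does not exclude products. For $n=3$ the weights are $1,2,3$, and the product of the weight-$1$ and weight-$2$ entries has the same weight as the weight-$3$ entry. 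To exclude such terms you need to know where the entries actually sit in $\tMz$:
\begin{itemize}
\item that $\tQz_1\tQz_{1+\frac1{n+1}}$ has no cross terms;
\item which root entries share a row, or share a column after right multiplication by $\Pi$ or $\hat\Pi$.
\end{itemize}
That is a general-$n$ computation from the root lists of \cite{GH1}, and the proposal promises it without doing it.

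The mechanism you suggest does not supply this computation either. In general $\tMz$ is a Fiedler-type companion matrix, not a permutation conjugate of a Frobenius one. Already for $n=3$ and generic $s$ it is not permutation-similar to any Frobenius companion matrix. Linearity for such matrices needs its own argument. One option is to reorder the factors, using commutation of orthogonal root groups and cyclic conjugation, until you reach Frobenius form. Another is a determinant expansion that controls which products of root entries can occur.

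Moreover, the signs are exactly what the statement asserts, and they come only out of that computation. Saying ``up to sign \dots\ matching gives $a_{i,j}=s_{i,j}$'' assumes the answer. Watch the normalization too. The stated signs for $n$ even require $s_0=s_{n+1}=1$, i.e.\ reading off the coefficients of $\det(\tMz-\mu I)$. If you expand $\det(\mu I-\tMz)$ as you propose, every $s_i$ changes sign when $n+1$ is odd. For $n=2$ you would find the weight-$m$ entry equal to $-(-1)^m s_m$ rather than $(-1)^m s_m$.
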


As this Stokes data arises from a local solution of (\ref{ost}), the connection form
$\hat\al$ has the symmetries listed in section \ref{hatal}. We have already described the effect of $\tau$ in Proposition \ref{tauM}. The effect of $\si$ and $\th$ is as follows:

\begin{proposition}\label{sithM} (cf.\ Lemma 2.3 of \cite{GIL2})

\noindent (i) Anti-symmetry:     $\tQz_{k+1} = (\tQz_k){}^{-T}$

\noindent (ii) $\theta$-reality:   $\tQz_k=
\begin{cases}
\tC
\left(
\bartQz_{ \scriptstyle\frac{2n}{n+1}-k}
\right)^{-1}
\tC
\quad\text{if $n$ is odd}
\\
C
\left(
\bartQz_{ \scriptstyle\frac{2n+1}{n+1}-k}
\right)^{-1}
C
\quad\text{if $n$ is even}
\end{cases}
$
\newline
where
$\tC
= \diag(1,-1,\dots,-1)C$ and
$
C=
\tiny
\left(
\begin{array}{c|ccc}
1 & & &  \\
\hline
  &  &  & 1 \\
  &  &  \iddots &  \\
  & 1 &  &
\end{array}
\right).
$
\qed
\end{proposition}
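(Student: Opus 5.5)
The statement to prove is Proposition \ref{sithM}: how the anti-symmetry $\si$ and the $\th$-reality condition act on the modified Stokes factors $\tQz_k$. The strategy is the same as for Proposition \ref{tauM}. Each symmetry of $\hat\al$ turns a solution of (\ref{psizeta}) into another solution, with the variable $\ze$ replaced by $-\ze$ or $\bar\ze$. By uniqueness of the canonical solutions on Stokes sectors, it must send $\Psiz_k$ to some other $\Psiz_{k'}$, multiplied by a constant matrix. Comparing the two sides of the definition $\Psiz_{k+\frac1{n+1}}=\Psiz_k\Qz_k$ then gives the stated relations. Finally we conjugate by the diagonal matrix from (\ref{tQz}) to pass to $\tQz_k$.

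\textbf{Anti-symmetry.} First we convert $\si(\hat\al(\ze))=\hat\al(-\ze)$ into a statement about the dual connection. For $\Psi$ solving (\ref{psizeta}), the function $\ze\mapsto \De\,\Psi(-\ze)^{-T}$ is again a solution, because the coefficient matrix transforms by $X\mapsto -\De X^T\De$.

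Next we apply this transformation to the formal solution $\Psiz_f$. We use $w\mapsto -\De w\De$ (anti-symmetry of $w$), together with $\Om^{-T}=\Om^{-1}$ up to a scalar and $\Om^{-1}=\frac1{n+1}\bar\Om$. The result should be $\Psiz_f(\ze)$ multiplied on the right by a fixed permutation-type matrix. This matrix conjugates $e^{-\frac1\ze d_{n+1}}$ back to $e^{\frac1\ze d_{n+1}}$, essentially via $\om^j\mapsto\om^{-j}$.

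Under $\ze\mapsto-\ze$, the sector $\Omz_k$ goes to $\Omz_{k+1}$, since $\thz_k-\pi=\thz_{k+1}$. Hence $\De\Psiz_k(-\ze)^{-T}=\Psiz_{k+1}(\ze)P$ for that constant matrix $P$. Substituting into the definition of the Stokes factors gives $\Qz_{k+1}=P(\Qz_k)^{-T}P^{-1}$. After the $d_{n+1}^{\pm1/2}$ or $d_{n+1}^{-n/2}$ normalization of (\ref{tQz}), $P$ should be absorbed. This leaves (i), $\tQz_{k+1}=(\tQz_k)^{-T}$; the choice of normalization was made precisely so that this happens.

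\textbf{$\th$-reality.} Since the $w_i$ are real, the function $\overline{\Psi(\bar\ze)}$ is again a solution. Applying this to $\Psiz_f$ gives $\overline{\Om}$ in place of $\Om$, and $\overline{\Om}=\Om C$, since $C$ is the permutation $j\mapsto -j \bmod n+1$. The exponential becomes $e^{\frac1\ze \bar d_{n+1}}=C e^{\frac1\ze d_{n+1}}C$. So $\overline{\Psiz_f(\bar\ze)}=\Psiz_f(\ze)C$.

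Complex conjugation reflects the sectors. It reverses the ordering, so that $\overline{\Omz_{k}}=\Omz_{k'}$ with the index reversed: $\thz_k\mapsto -\thz_k$. The resulting relation $\Psiz_{\cdot}\Qz$ read backwards produces an inverse, giving $\Qz_k=C\bigl(\overline{\Qz_{k''}}\bigr)^{-1}C$ for the appropriate index $k''$. The value of $k''$ is to be computed from the formula for $\thz_k$ in each parity of $n$. Finally we conjugate by the $d_{n+1}$-powers of (\ref{tQz}). For $n$ odd, the half-powers satisfy $C\,\overline{d_{n+1}^{1/2}}\,C=d_{n+1}^{1/2}$ only up to signs, and these signs produce $\tC=\diag(1,-1,\dots,-1)C$.

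\textbf{Main obstacle.} The hard part is purely bookkeeping: identifying exactly which sector is the image under $\ze\mapsto\bar\ze$, which yields the shifts $\frac{2n}{n+1}-k$ versus $\frac{2n+1}{n+1}-k$. The other delicate point is tracking the branch choices of $d_{n+1}^{1/2}$ so that the sign matrix in $\tC$ comes out correctly. I would first verify both on the case $n=1$ and then do the general index computation.
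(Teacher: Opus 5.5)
Your overall strategy is the right one. The paper gives no proof here and only cites Lemma 2.3 of \cite{GIL2}, but its sketch for Proposition \ref{fibresymmetries} uses exactly your method: transform the canonical solutions by the symmetry, identify the image by uniqueness of the solution with prescribed asymptotics on the image sector, and compare Stokes factors.

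\textbf{The $\th$-reality part is correct.} The bookkeeping you defer does give the stated indices:
\begin{itemize}
\item $\overline{\Omz_k}=\Omz_{k'}$, with $k'=\frac{2n+1}{n+1}-k$ for $n$ odd and $k'=2-k$ for $n$ even.
\item Hence $\overline{\Psiz_k(\bar\ze)}=\Psiz_{k'}(\ze)C$ and $\Qz_{k'-\frac1{n+1}}=C(\overline{\Qz_k})^{-1}C$. These are exactly the shifts $\frac{2n}{n+1}-k$ and $\frac{2n+1}{n+1}-k$.
\item For $n$ odd, $C\,\overline{d_{n+1}^{1/2}}\,C=\diag(1,-1,\dots,-1)\,d_{n+1}^{1/2}$ because $\om^{(n+1)/2}=-1$. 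This is where $\tC$ comes from.
\item For $n$ even, $n/2$ is an integer, and conjugation by $d_{n+1}^{n/2}$ leaves $C$ unchanged.
\end{itemize}

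\textbf{The anti-symmetry part contains a wrong claim, and with it the absorption step would fail.}
\begin{itemize}
\item The map $\Psi\mapsto\De\,\Psi(-\ze)^{-T}$ already restores the factor $e^{\frac1\ze d_{n+1}}$: the sign change from $\ze\mapsto-\ze$ is undone by the inverse. So no permutation $\om^j\mapsto\om^{-j}$ is needed or produced.
\item $\Om$ is symmetric, so $\Om^{-T}=\Om^{-1}=\frac1{n+1}\bar\Om$ exactly, not just up to a scalar.
\item $\De\bar\Om=\Om\, d_{n+1}$, since $\om^{-(n-i)j}=\om^{ij}\om^{j}$.
\item Hence $\De\,\Psiz_f(-\ze)^{-T}=\Psiz_f(\ze)P$ with $P=\frac1{n+1}d_{n+1}$, which is diagonal. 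This gives $\Qz_{k+1}=d_{n+1}(\Qz_k)^{-T}d_{n+1}^{-1}$.
\end{itemize}

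The normalization (\ref{tQz}) removes $P$ precisely because $P$ is a power of $d_{n+1}$:
\begin{itemize}
\item For $n$ odd, $d_{n+1}^{-1/2}\,d_{n+1}(\Qz_k)^{-T}d_{n+1}^{-1}\,d_{n+1}^{1/2}=(d_{n+1}^{-1/2}\Qz_k d_{n+1}^{1/2})^{-T}$.
\item For $n$ even, use $d_{n+1}^{n+1}=I$, so $d_{n+1}^{n/2+1}=d_{n+1}^{-n/2}$.
\end{itemize}
A permutation-type $P$ could not be absorbed by conjugating with diagonal matrices. Replace your guess with this computation and the argument is complete.
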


It follows from section 6 of \cite{GILX}
--- we shall give a direct proof later, in Theorem \ref{preservesM} ---
that these two conditions correspond, respectively, to the conditions
(a) $s_i=s_{n-i+1}$, (b) $s_1,\dots,s_n\in\R$.  Hence the space
obtained by imposing the symmetry conditions given by $\si$ and $\th$
can be described as follows:

\begin{definition}\label{scriptMlocal}
$\cM_{n+1}^{\text{\em local}}=
\{ \tMz \in \cM_{n+1} \st s_i=s_{n-i+1} \text{ and } s_1,\dots,s_n\in\R\}$.
\end{definition}

Thus $\cM_{n+1}^{\text{local}}$
may be identified with
$\R^{ \tfrac12(n+1)}$ if $n$ is odd, and
$\R^{ \tfrac12n}$ if $n$ is even.

\begin{remark}\label{localzero}
As the notation suggests (see \cite{GH1}), $\cM_{n+1}^{\text{local}}$ consists of the values of $\tMz$
which actually arise from local solutions of (\ref{ost}).
More precisely,  the natural map
from the space of local solutions to  $\cM_{n+1}^{\text{local}}$ is surjective (the fibres
consist of connection matrices, which we
define in the next section).  To see this (\cite{GH1}, \cite{GILX})
it suffices to consider solutions which are
smooth near $t=0$, i.e.\ on intervals of the form $(0,\eps)$.
\qed
\end{remark}

\subsection{The connection matrices.}\label{conn}

We define connection matrices  $E_k$ by
\[
\Psii_k=\Psiz_k E_k.
\]
It follows that
\begin{equation*}
E_k = \left(\Qz_{k-\scriptstyle\frac1{n+1}}\right)^{-1}  E_{k-\scriptstyle\frac1{n+1}}
\ \Qi_{k-\scriptstyle\frac1{n+1}}.
\end{equation*}
Thus the connection matrix $E_1$ determines
all other $E_k$.

It is shown in \cite{GIL3}, \cite{GILX}, that, when the monodromy data
arises from a solution $w$ which is
globally smooth on $\C^\ast$, the
corresponding value of $E_1$ must be
\[
E_1^{\id}=
\begin{cases}
\tfrac1{n+1} C \Qi_{\frac n{n+1}} \quad\text{if $n$ is odd}
\\
\tfrac1{n+1} C  \quad\text{if $n$ is even}
\end{cases}
\]
where $C$ was defined in Proposition \ref{sithM}.
Thus this special value of $E_1$
plays an important role, and we shall use it to \ll normalize\rr  $E_1$ in
the following way:
\begin{definition}\label{E}
$E=E_1 (E_1^{\id})^{-1}$.
\end{definition}
We can now say that all of the connection matrix data (beyond the Stokes data) is represented by the single matrix $E$.

Similarly, we define $\tE_k$ by
$\tPsii_k=\tPsiz_k\tE_k$.
We have
\[
\tE_k=
\begin{cases}
d_{n+1}^{-\frac12} E_k d_{n+1}^{-\frac12} \quad\text{if $n$ is odd}
\\
 (d_{n+1})^{\frac n2} E_k (d_{n+1})^{\frac n2} \quad\text{if $n$ is even.}
\end{cases}
\]
This gives
\[
\tE_1^{\id}=
\begin{cases}
\tfrac1{n+1} \tC \tQi_{\frac n{n+1}} \quad\text{if $n$ is odd}
\\
\tfrac1{n+1} C  \quad\text{if $n$ is even}
\end{cases}
\]
where $\tC$ is as in Proposition \ref{sithM}.
We normalize $\tE_1$ in the same way:
\begin{definition}\label{Etilde}
$\tE=\tE_1 (\tE_1^{\id})^{-1}$.
\end{definition}

To summarize: starting with a solution of (\ref{ost}) on some
nonempty open interval of the real line,
we have defined two matrices $\Mz,E$ (or $\tMz,\tE$) which represent the
monodromy data of the corresponding connection form $\hat\al$.
We have seen that the symmetries of $\tMz$ amount to
saying that $\tMz\in \cM_{n+1}^{\text{local}}$.
We shall now consider the analogous
symmetries of $\tE$.

\begin{proposition}\label{Ecyclic} The cyclic symmetry property of $\tE$ is $\tE\tMz=\tMz \tE$
(which is equivalent to $E\Mz=\Mz E$).
\end{proposition}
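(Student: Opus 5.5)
The plan is to derive a functional identity for the canonical solutions $\Psiz_k,\Psii_k$ from the cyclic symmetry $\tau(\hat\al(\ze))=\hat\al(\om\ze)$. This is the same mechanism that gives Proposition \ref{tauM}, so we first recall how that works. Transposing the symmetry shows that if $\Psi(\ze)$ solves (\ref{psizeta}), then so does $d_{n+1}^{\pm1}\Psi(\om\ze)$, with the sign fixed by the convention $\tau(X)=d_{n+1}^{-1}Xd_{n+1}$. Next compare formal solutions. Using $\Pi=\Om d_{n+1}\Om^{-1}$, one checks that $\Om(\ldots)$ conjugated by $d_{n+1}$ equals $\Om$ times a permutation, and that $e^{\frac1{\om\ze}d_{n+1}}$ is a $\Pi$-conjugate of $e^{\frac1\ze d_{n+1}}$. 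Hence $d_{n+1}^{\pm1}\Psiz_f(\om\ze)=\Psiz_f(\ze)\,\Pi^{\pm1}$. Rotation by $\om$ moves the singular directions by $2\pi/(n+1)$, i.e.\ by two steps in the index $k$. Uniqueness of the solution with prescribed asymptotics in a Stokes sector then gives $d_{n+1}^{\pm1}\Psiz_k(\om\ze)=\Psiz_{k+\frac2{n+1}}(\ze)\,\Pi^{\pm1}$, which is how Proposition \ref{tauM} arises. Exactly the same argument applies at $\ze=\infty$, with $e^{x^2\ze d_{n+1}}$ and $\Om^{-1}$ replacing $e^{\frac1\ze d_{n+1}}$ and $\Om$. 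This gives $d_{n+1}^{\pm1}\Psii_k(\om\ze)=\Psii_{k+\frac2{n+1}}(\ze)\,P$ for a permutation matrix $P$, where $P$ is $\Pi$ or $\Pi^{-1}$ and is determined by the same computation.

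Substituting both identities into $\Psii_k=\Psiz_kE_k$ and cancelling the common left factor $d_{n+1}^{\pm1}$ yields the twisted periodicity $E_{k+\frac2{n+1}}=\Pi^{\pm1}E_kP^{-1}$. On the other hand, iterating the recursion $E_k=(\Qz_{k-\frac1{n+1}})^{-1}E_{k-\frac1{n+1}}\Qi_{k-\frac1{n+1}}$ twice gives
\[
E_{1+\frac2{n+1}}=(\Qz_1\Qz_{1+\frac1{n+1}})^{-1}E_1\,\Qi_1\Qi_{1+\frac1{n+1}}.
\]
Equating the two expressions and using Definition \ref{matrixM}, we expect
\[
\Mz E_1=E_1 N,\qquad N:=\Qi_1\Qi_{1+\frac1{n+1}}P ,
\]
possibly after some bookkeeping of $\Pi$ versus $\Pi^{-1}$. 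Here $N$ is the analogue of $\Mz$ at $\ze=\infty$.

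By Definition \ref{E}, $E=E_1(E_1^{\id})^{-1}$. Then
\[
\Mz E=E_1N(E_1^{\id})^{-1},
\]
so $\Mz E=E\Mz$ is equivalent to the single identity $\Mz E_1^{\id}=E_1^{\id}N$. This identity is the main obstacle. It is not tautological, because $E_1^{\id}$ is given by an explicit formula ($\tfrac1{n+1}C$, or $\tfrac1{n+1}C\Qi_{\frac n{n+1}}$ when $n$ is odd), whereas $N$ depends on the $\Qi_k$.

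My first attempt would be a direct algebraic verification. Use $c$-reality (Lemma 2.4 of \cite{GIL2}) to express the $\Qi_k$, and hence $N$, in terms of the $\Qz_k$, and therefore in terms of $\Mz$. Then check $\Mz C=CN$ (respectively the odd-$n$ version) using Proposition \ref{Qfroms} and the conjugation properties of $C$, $\Pi$ and $\De$. I expect $c$-reality to give $N$ as a conjugate of a suitable transform of $\Mz$ by $C$, which would make the check short. As a fallback, for data coming from global solutions $E_1=E_1^{\id}$, so the identity holds on the region of global solutions. Both sides are polynomial in $s_1,\dots,s_n$ once $N$ is expressed through $c$-reality, so one could try to extend the identity from that region, which has nonempty interior in $\cM_{n+1}^{\text{local}}$, by an identity-theorem argument. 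The weak point of this fallback is that it only propagates within the real symmetric slice, which is why I prefer the direct computation.

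Finally, the tilde version follows by conjugating everything by the diagonal matrices $d_{n+1}^{\pm\frac12}$ (or $d_{n+1}^{\pm\frac n2}$) relating $\Qz_k,E_k$ to $\tQz_k,\tE_k$, together with the replacement of $\Pi$ by $\hat\Pi$ as in Proposition \ref{tauMtilde}. Since $\tMz$ and $\tE$ are obtained from $\Mz$ and $E$ by compatible diagonal conjugations, the two commutation relations are equivalent.
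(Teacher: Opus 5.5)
Your architecture matches the paper's. You obtain from the cyclic symmetry a relation (a), $\Mz E_1=E_1N$, valid for every solution, check the same relation (b) for $E_1^{\id}$, and divide. But the step at $\ze=\infty$ is wrong, and the error is not a $\Pi$ versus $\Pi^{-1}$ issue. The sectors at infinity are labelled in the opposite rotational sense: by (\ref{cforPsi}), $\Omi_k$ has the same range of $\arg\ze$ as $\Omz_{\frac{2n+1}{n+1}-k}$. At zero you correctly get $d_{n+1}^{-1}\Psiz_k(\om\ze)=\Psiz_{k+\frac2{n+1}}(\ze)\,\Pi$. At infinity the same computation instead gives $d_{n+1}^{-1}\Psii_k(\om\ze)=\Psii_{k-\frac2{n+1}}(\ze)\,\Pi^{-1}$, and hence $\Qi_{k-\frac2{n+1}}=\Pi^{-1}\Qi_k\Pi$. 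So the twisted periodicity $E_{k+\frac2{n+1}}=\Pi^{\pm1}E_kP^{-1}$ does not hold. Substituting into $\Psii_k=\Psiz_kE_k$ gives instead
\[
E_{k-\frac2{n+1}}\,\Pi^{-1}=\Qz_{k-\frac2{n+1}}\Qz_{k-\frac1{n+1}}\Qz_k\Qz_{k+\frac1{n+1}}\,\Pi\,E_k .
\]
Combining this with the two-step recursion yields exactly the paper's relation
\[
E_k=(\Qz_k\Qz_{k+\frac1{n+1}}\Pi)\,E_k\,(\Qi_k\Qi_{k+\frac1{n+1}}\Pi).
\]
So the correct $N$ is $(\Qi_1\Qi_{1+\frac1{n+1}}\Pi)^{-1}$, which in general differs from your $\Qi_1\Qi_{1+\frac1{n+1}}P$. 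With your $N$, relation (a) is false, and so is the identity $\Mz E_1^{\id}=E_1^{\id}N$ that you propose to verify by direct computation.

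Once this is corrected, your reduction is exactly the paper's. The result follows from (b), namely $E_1^{\id}=(\Qz_1\Qz_{1+\frac1{n+1}}\Pi)\,E_1^{\id}\,(\Qi_1\Qi_{1+\frac1{n+1}}\Pi)$, which the paper simply asserts follows from the definition of $E_1^{\id}$.

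You leave (b) open, but your fallback closes it, and your stated worry about it is misplaced. The proposition concerns data coming from solutions, whose Stokes data lies in $\cM_{n+1}^{\text{local}}$ anyway. The global solutions, for which $E_1=E_1^{\id}$, fill a compact polytope with nonempty interior in that real slice. So polynomial continuation of (b) within the real symmetric slice is all that is needed. That route imports the theorem $E_1=E_1^{\id}$ for global solutions from \cite{GIL3}, \cite{GILX}, whereas the paper treats (b) as an algebraic consequence of the formula for $E_1^{\id}$.
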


\begin{proof} The cyclic symmetry, applied to the definition of $E_k$, produces the relation
$d_{n+1}^{-1} E_k= \om \ (\Qi_k \Qi_{k+\frac1{n+1}} \Pi)
\
d_{n+1}^{-1} E_k\  (\Qi_k \Qi_{k+\frac1{n+1}} \Pi)$
(cf.\ Lemma 2.5 of \cite{GIL2}).  This is equivalent to
\[
E_k = (\Qz_k \Qz_{k+\frac1{n+1}} \Pi) E_k (\Qi_k \Qi_{k+\frac1{n+1}} \Pi).
\]
We obtain
(a) $E_1 = \Qz_1\Qz_{1+\scriptstyle\frac1{n+1}}\Pi \  E_1\
\Qi_1\Qi_{1+\scriptstyle\frac1{n+1}}\Pi$
and
(b) $E_1^{\id} = \Qz_1\Qz_{1+\scriptstyle\frac1{n+1}}\Pi \  E_1^{\id}\
\Qi_1\Qi_{1+\scriptstyle\frac1{n+1}}\Pi$.
In fact, (b) follows directly from the definition of $E_1^{\id}$ above.
Multiplication of (a) by the inverse of (b) then gives
\[
E_1 (E_1^{\id})^{-1}=
 \Qz_1\Qz_{1+\scriptstyle\frac1{n+1}}\Pi \
 E_1 (E_1^{\id})^{-1}
(\Qz_1\Qz_{1+\scriptstyle\frac1{n+1}}\Pi)^{-1},
\]
i.e.\ $E \Mz=\Mz E$. The tilde version $\tE\tMz=\tMz \tE$ follows immediately.
\end{proof}

The remaining symmetries are as follows. We
just present the results for the tilde versions:

\begin{proposition}\label{fibresymmetries}
The matrix $\tilde E$ has the following symmetries:

\noindent (i) Anti-symmetry:
$
\tilde E=
\begin{cases}
\hat\Pi^{(n+1)/2}  \tilde E^{-T}  \hat\Pi^{-(n+1)/2} \quad\text{if $n$ is odd}
\\
(\tSz_1)^T  \tilde E^{-T}  (\tSz_1)^{-T}  \quad\text{if $n$ is even}
\end{cases}
$

\noindent (ii) $\theta$-reality:
$
\tilde E=
\begin{cases}

\left(
\overline{\tilde C\tQz_{ {\scriptstyle\frac{n}{n+1}}  } }
\right)
\bar {\tilde E}
\left(
\overline{\tilde C\tQz_{ {\scriptstyle\frac{n}{n+1}}  } }
\right)^{-1}
\quad\text{if $n$ is odd}
\\
\ C
\bar {\tilde E}
C^{-1}
\quad\text{if $n$ is even}
\end{cases}
$

\noindent (iii) $c$-reality:  $\tilde E = \left( \bar{\tilde E}\right)^{-1}$.
\newline
\end{proposition}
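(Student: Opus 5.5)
The plan mirrors the proof of Proposition \ref{Ecyclic}: each symmetry of $\hat\al$ gives an identity for $E_1$; the same identity holds for $E_1^{\id}$; dividing one by the other yields the statement for $E$, hence for $\tE$.

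\emph{Step 1 (transformation of solutions).} For each involution $\si,\th,c$, write down how it acts on the parallel sections of (\ref{psizeta}). For anti-symmetry, $\ze\mapsto-\ze$ together with $\si$ sends $\Psi(\ze)$ to $\Delta\Psi(-\ze)^{-T}$, up to constant right factors. For $\th$-reality the map is $\overline{\Psi(\bar\ze)}$. For $c$-reality it is $\Delta\overline{\Psi(1/(x^2\bar\ze))}$, which exchanges $0$ and $\infty$. Applying each map to the formal solutions $\Psiz_f,\Psii_f$ gives a formal solution again, times an explicit constant matrix built from $\Om$, $d_{n+1}$, $C$ and $\De$. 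By uniqueness, the holomorphic solutions on Stokes sectors are permuted as follows:
\begin{itemize}
\item $k\mapsto k+1$ under $\si$, compatibly with Proposition \ref{sithM}(i);
\item $k\mapsto \frac{2n}{n+1}-k$ (or $\frac{2n+1}{n+1}-k$) under $\th$, compatibly with Proposition \ref{sithM}(ii);
\item $\Psiz_k\leftrightarrow\Psii_{k'}$ under $c$, as in the $c$-reality relation of \cite{GILX}.
\end{itemize}

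\emph{Step 2 (identities for $E_k$).} Substitute these transformations into $\Psii_k=\Psiz_k E_k$. This expresses $E_1$ in terms of $E_{2}^{-T}$ under $\si$, $\bar E_{k'}$ under $\th$, and $\bar E_1^{-1}$ under $c$, each conjugated by constant matrices. Use the recursion $E_k=(\Qz_{k-\frac1{n+1}})^{-1}E_{k-\frac1{n+1}}\Qi_{k-\frac1{n+1}}$, together with the $c$-relation between $\Qi$ and $\Qz$, to return everything to index $1$. This produces the Stokes factors that appear in the statement: $\tSz_1$ for even $n$, $\tQz_{\frac n{n+1}}$ for odd $n$, and powers of $\hat\Pi$ coming from Proposition \ref{tauMtilde}.

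\emph{Step 3 (normalize).} The element $E_1^{\id}$ arises from global solutions, so it satisfies the same identities. This can also be checked directly from its explicit formula. Forming $E=E_1(E_1^{\id})^{-1}$ cancels the right-hand factors. For instance, under $c$ we get $E_1=A\bar E_1^{-1}B$ and $E_1^{\id}=A(\bar E_1^{\id})^{-1}B$, hence $E=\overline{E_1^{\id}}\,\bar E_1^{-1}=\bar E^{-1}$. Finally, convert to tilde quantities using the formulas for $\tE_k$ and (\ref{tQz}).

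\emph{Main obstacle.} The difficulty is the bookkeeping of constant matrices and index shifts, especially for (i) and (ii) with odd $n$. There the normalization $d_{n+1}^{1/2}$ interacts with $\De$ and complex conjugation, producing signs that turn $\Pi$ into $\hat\Pi$ and $C$ into $\tC$. I would first verify each identity for $E_1^{\id}$ explicitly, so that only the cancellation has to be checked. I would then confirm the signs in the small cases $n=1,2$ before writing the general argument.
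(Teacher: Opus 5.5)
\textbf{Verdict.} Your route is the paper's: transform $\Psii_k=\Psiz_kE_k$ by the symmetries, reduce to index $1$, normalize by $E_1^{\id}$, and pass to tildes. The gap is in Step 3, which is where the content of the proposition lies.

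\textbf{Step 3 drops the left factor.} Normalizing cancels only the right-hand factor; the left-hand factor survives as a conjugation. In your own $c$-example, the relations $E_1=A\bar E_1^{-1}B$ and $E_1^{\id}=A(\overline{E_1^{\id}})^{-1}B$ give
\[
E=E_1(E_1^{\id})^{-1}=A\,\bar E_1^{-1}\,\overline{E_1^{\id}}\,A^{-1}=P\,\bar E^{-1}P^{-1},\qquad P=A\,(\overline{E_1^{\id}})^{-1}=E_1^{\id}B^{-1}.
\]
This is not $\overline{E_1^{\id}}\,\bar E_1^{-1}$.

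For (i) and (ii) the relevant maps $X\mapsto X^{-T}$ and $X\mapsto\bar X$ are homomorphisms. So the same computation gives $E=A\,E^{-T}A^{-1}$ and $E=A\,\bar E\,A^{-1}$. The surviving $A$, once the powers of $d_{n+1}^{1/2}$ from the tilde normalization are absorbed, is exactly $F_\si$ or $F_\th$. It must be computed, not cancelled. If every factor really dropped out, the $s$-dependent conjugators $(\tSz_1)^T$ and $\overline{\tC\tQz_{\frac n{n+1}}}$ in (i) and (ii) could never appear.

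\textbf{Your shortcut gives a false identity in (iii).} Take $n$ odd. The paper's relation $(n+1)^2d_{n+1}^{-1}E_kCd_{n+1}\overline{E_{\frac{2n+1}{n+1}-k}}\,C=I$ at $k=1$ involves $E_{\frac n{n+1}}$, which you rewrite via the recursion. Combined with $E_1^{\id}=\tfrac1{n+1}C\Qi_{\frac n{n+1}}$, this makes $P$ a scalar multiple of $d_{n+1}$. So the untilded identity is
\[
(d_{n+1}^{-1}E)\overline{(d_{n+1}^{-1}E)}=I,
\]
not $E\bar E=I$. It becomes $\tE\bar{\tE}=I$ only because of two facts:
\begin{itemize}
\item although $\tE_1=d_{n+1}^{-1/2}E_1d_{n+1}^{-1/2}$, the normalized $\tE=d_{n+1}^{-1/2}E\,d_{n+1}^{1/2}$ is a conjugate of $E$;
\item $\overline{d_{n+1}^{1/2}}=d_{n+1}^{-1/2}$.
\end{itemize}
Had you started from $E\bar E=I$, converting would give $\tE\bar{\tE}=d_{n+1}^{-1/2}E\,d_{n+1}\bar E\,d_{n+1}^{-1/2}$, which is not $I$ in general.

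\textbf{A smaller point about $E_1^{\id}$.} Saying that $E_1^{\id}$ comes from global solutions only establishes the needed identities for Stokes data in the compact polytope. Rely instead on the direct check from its explicit formula, as the paper does in Proposition \ref{Ecyclic}. In the $c$ case, that check is precisely the $c$-reality relation between $\Qi_{\frac n{n+1}}$ and $\Qz_{\frac n{n+1}}$. This is also what makes your right-hand factor $B$ cancel.
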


\begin{proof}
All three can be obtained by applying
the symmetries of the functions $\Psiz_k,\Psii_k$ to the definition of $E_k$
(cf.\ section 2 of \cite{GIL2} for the case $n=3$).
As a typical example we sketch the proof of $\tilde E = (\bar{\tilde E})^{-1}$ in the case where $n$ is odd.
First, the $c$-reality condition gives
\[
\De \overline{\Psiz_f(1/(x^2\bar\ze)})(n+1)^{-1}d_{n+1}^{-1}C  = \Psii_f(\ze),
\]
because both sides are formal solutions of (\ref{psizeta}) of the same type,
hence must be equal.
It follows that
\begin{equation}\label{cforPsi}
\De \overline{\Psiz_{\scriptstyle\frac{2n+1}{n+1}-k}(1/(x^2\bar\ze)})(n+1)^{-1}d_{n+1}^{-1}C  = \Psii_k(\ze)
\end{equation}
for $\ze\in\Omi_k$ (i.e.\ for $1/(x^2\bar\ze)\in\Omz_{\scriptstyle\frac{2n+1}{n+1}-k}$),
because both sides are solutions of (\ref{psizeta}) which are asymptotic as $\ze\to\infty$ to
the same formal solution,
hence must be equal.  Next, we apply this to $\Psii_l=\Psiz_l E_l$ with
$l=k, l=\scriptstyle\frac{2n+1}{n+1}-k$.
Comparing the resulting formula with (\ref{cforPsi}), we conclude that
$(n+1)^2 d_{n+1}^{-1} E_k C d_{n+1} \overline{ E_{\scriptstyle\frac{2n+1}{n+1}-k} } C = I$.
Finally, putting $k=1$, and using the definition $E=E_1 (E_1^{\id})^{-1}$,
we obtain
$(d_{n+1}^{-1}E) \overline{(d_{n+1}^{-1}E)}=I$. The tilde version of this is
the required formula
$\tilde E \bar{\tilde E} =I$.
\end{proof}

As we have seen, the symmetries of $\tMz$ imply that it is a real matrix.
Although $\tE$ is
not in general real, (ii) shows that its characteristic polynomial is real.

In the next section we shall consider the space of all {\em pairs} $(\tE,\tMz)$. In
order to put the symmetries of $\tE$ and $\tMz$ on the same footing we shall express
the symmetries of $\tMz$ in a similar way to those of $\tE$:

\begin{proposition}\label{basesymmetries}
The matrix $\tMz$ has the following symmetries:

\noindent (i) Anti-symmetry:
$
\tMz=
\begin{cases}
\hat\Pi^{(n+1)/2}  (\tMz)^{-T}  \hat\Pi^{-(n+1)/2} \quad\text{if $n$ is odd}
\\
(\tSz_1)^T  (\tMz)^{-T}  (\tSz_1)^{-T}  \quad\text{if $n$ is even}
\end{cases}
$

\noindent (ii) $\theta$-reality:
$
\tMz=
\begin{cases}
\left(
\overline{\tilde C\tQz_{ {\scriptstyle\frac{n}{n+1}}  } }
\right)
(\bar {\tilde M}^{(0)})^{-1}
\left(
\overline{\tilde C\tQz_{ {\scriptstyle\frac{n}{n+1}}  } }
\right)^{-1}
\quad\text{if $n$ is odd}
\\
\vphantom{  \dfrac12  }
\ C
(\bar {\tilde M}^{(0)})^{-1}
C^{-1}
\quad\text{if $n$ is even.}
\end{cases}
$
\end{proposition}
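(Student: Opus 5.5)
The plan is to derive both identities from Proposition \ref{sithM}, combined with the cyclic symmetry (Proposition \ref{tauMtilde}) and the definition (\ref{tMz}) $\tMz=\tQz_1\tQz_{1+\frac1{n+1}}\hat\Pi$ (or $\Pi$ when $n$ is even). I will write the details for $n$ odd; the even case is the same with $\hat\Pi$ replaced by $\Pi$ and $\tC$ by $C$.

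For (i) I start from $(\tMz)^{-T}=\hat\Pi^{-T}(\tQz_1)^{-T}(\tQz_{1+\frac1{n+1}})^{-T}$. Proposition \ref{sithM}(i) gives $(\tQz_k)^{-T}=\tQz_{k+1}$, and $\hat\Pi$ is orthogonal, so $\hat\Pi^{-T}=\hat\Pi$. Hence
\[
(\tMz)^{-T}=\hat\Pi\,\tQz_2\tQz_{2+\frac1{n+1}}.
\]
Applying Proposition \ref{tauMtilde} $(n+1)/2$ times, we get $\tQz_{k+1}=\hat\Pi^{(n+1)/2}\tQz_k\hat\Pi^{-(n+1)/2}$. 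Substituting and using that $\hat\Pi$ commutes with its own powers,
\[
(\tMz)^{-T}=\hat\Pi^{(n+1)/2}\,\hat\Pi\,\tQz_1\tQz_{1+\frac1{n+1}}\,\hat\Pi^{-(n+1)/2}.
\]
This is $\hat\Pi^{(n+1)/2}\,\hat\Pi\,\tMz\,\hat\Pi^{-1}\,\hat\Pi^{-(n+1)/2}$. Conjugating $\tMz$ by $\hat\Pi$ is a cyclic shift of the factors $\tQz_k$, and I must check that it does not alter the identity.

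That check is where the care lies. I would rather write $\tMz=\hat\Pi\,\tQz_{1-\frac1{n+1}}\tQz_1$ and redo the computation so the $\hat\Pi$ lands in the right place. Alternatively I would use the transpose directly, $(\tMz)^{-T}=\hat\Pi\,(\tQz_1)^{-T}(\tQz_{1+\frac1{n+1}})^{-T}$, which gives $\hat\Pi\,\tQz_2\tQz_{2+\frac1{n+1}}$. After conjugation this equals $\hat\Pi^{(n+1)/2}(\hat\Pi\tQz_1\tQz_{1+\frac1{n+1}})\hat\Pi^{-(n+1)/2}$. Equating $\hat\Pi\tQz_1\tQz_{1+\frac1{n+1}}$ with $\tMz$ up to conjugation by $\hat\Pi$ then reduces the claimed identity to the relation between $\tQz_{1-\frac1{n+1}}$ and $\tQz_{1+\frac1{n+1}}$ from Proposition \ref{tauMtilde}.

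For $n$ even, $(n+1)/2$ is not an integer, so I would use $\tSz_1$ instead. Shifting the index by $1$ means conjugating by the product $\tSz_1=\tQz_1\cdots\tQz_{1+\frac n{n+1}}$. Concretely, $\tQz_{k+1}=(\tSz_k)^{-1}\tQz_k\tSz_k$ is not immediate, so I would use $\tPsiz_{k+1}=\tPsiz_k\tSz_k$ together with the index-shift structure. Expressing $\tQz_2\tQz_{2+\frac1{n+1}}\Pi$ through $\tSz_1$ and using $(\tSz_1)^{-T}=\tSz_2$ should give the stated form $(\tSz_1)^T(\tMz)^{-T}(\tSz_1)^{-T}$.

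For (ii), Proposition \ref{sithM}(ii) gives
\[
\tQz_1\tQz_{1+\frac1{n+1}}=\tC\,\bigl(\overline{\tQz_{\frac{2n}{n+1}-1}}\bigr)^{-1}\bigl(\overline{\tQz_{\frac{2n}{n+1}-1-\frac1{n+1}}}\bigr)^{-1}\tC .
\]
I would then move the product past $\hat\Pi$ using Proposition \ref{tauMtilde} and the relation between $\tC$ and $\hat\Pi$ (namely $\tC\hat\Pi\tC=\hat\Pi^{-1}$, which I will verify). This should rearrange the right-hand side into $(\overline{\tC\tQz_{\frac n{n+1}}})(\overline{\tMz})^{-1}(\overline{\tC\tQz_{\frac n{n+1}}})^{-1}$, where the extra factor $\tQz_{\frac n{n+1}}$ accounts for the index mismatch.

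The main obstacle is the index bookkeeping: getting the shifts by $\frac1{n+1}$ to line up, and pinning down the signs in $\hat\Pi$ and $\tC$. I would check the case $n=1$ explicitly first to fix conventions.
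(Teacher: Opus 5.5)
Your strategy (deduce both identities from Proposition \ref{sithM} together with the cyclic symmetry) is the paper's own; it is the ``furthermore'' direction of Theorem \ref{preservesM}. Your outline of (ii) is sound. For $n$ odd, two applications of cyclic symmetry turn $\Ad_{F_\th}(\overline{\tMz})^{-1}$ into $\tC(\overline{\tQz_{\frac{n-1}{n+1}}})^{-1}(\overline{\tQz_{\frac{n-2}{n+1}}})^{-1}\hat\Pi^{-1}\tC$, and $\tC\hat\Pi\tC=\hat\Pi^{-1}$ identifies this with your expression for $\tMz$. (For $n$ even the index in Proposition \ref{sithM}(ii) is $\frac{2n+1}{n+1}-k$, so there is no mismatch and $F_\th=C$ suffices.)

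Part (i), however, does not go through as written. In the odd case your starting formula is wrong. Inverse and transpose each reverse the order of a product, so $(XYZ)^{-T}=X^{-T}Y^{-T}Z^{-T}$ and $(\tMz)^{-T}=(\tQz_1)^{-T}(\tQz_{1+\frac1{n+1}})^{-T}\hat\Pi$, with $\hat\Pi$ on the right. The ``cyclic shift'' you then try to absorb is an artifact of this error: your expression differs from the true one by conjugation by $\hat\Pi$. The check you propose therefore amounts to $\hat\Pi\tMz\hat\Pi^{-1}=\tMz$, i.e.\ $\tQz_{1+\frac2{n+1}}\tQz_{1+\frac3{n+1}}=\tQz_1\tQz_{1+\frac1{n+1}}$. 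This is false in general (for $n=1$ it forces $s_1=0$), so no index bookkeeping can rescue it. With the correct order the computation closes at once: $(\tMz)^{-T}=\tQz_2\tQz_{2+\frac1{n+1}}\hat\Pi=\hat\Pi^{l}\tMz\hat\Pi^{-l}$ with $l=\frac{n+1}2$. Since $\hat\Pi^{n+1}=-I$ is central, conjugation by $\hat\Pi^{-l}$ agrees with conjugation by $\hat\Pi^{l}$, which gives (i). This last fact is also needed and is missing from your proposal.

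For $n$ even there is no argument yet. The relation $\tQz_{k+1}=(\tSz_k)^{-1}\tQz_k\tSz_k$ you float is false in general: after cancelling $\tQz_k$ it says $\tQz_{k+\frac1{n+1}}\cdots\tQz_{k+1}=\tQz_k\cdots\tQz_{k+\frac n{n+1}}$. Appealing to $\tPsiz_{k+1}=\tPsiz_k\tSz_k$ does not replace it. The missing observation is that $\tSz_k$ shifts the index of the whole product $\tQz_k\tQz_{k+\frac1{n+1}}\Pi$, not of a single factor. Write $\tSz_k=\tQz_k\tQz_{k+\frac1{n+1}}X_k$ with $X_k=\tQz_{k+\frac2{n+1}}\cdots\tQz_{k+\frac n{n+1}}$. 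Cyclic symmetry gives $\Pi\tSz_k\Pi^{-1}=X_k\tQz_{k+1}\tQz_{k+1+\frac1{n+1}}$, hence
\[
(\tSz_k)^{-1}\,\tQz_k\tQz_{k+\frac1{n+1}}\Pi\,\tSz_k=X_k^{-1}\,\Pi\,\tSz_k=\tQz_{k+1}\tQz_{k+1+\frac1{n+1}}\Pi .
\]
Anti-symmetry gives $(\tSz_1)^{-T}=\tSz_2$ and $(\tSz_1)^{T}=(\tSz_2)^{-1}$, and, with the correct order, $(\tMz)^{-T}=\tQz_2\tQz_{2+\frac1{n+1}}\Pi$. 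So the right-hand side of (i) is $\tQz_3\tQz_{3+\frac1{n+1}}\Pi$, which is $\tMz$ because anti-symmetry applied twice gives $\tQz_{k+2}=\tQz_k$. (The paper runs the same telescoping on the sequence $(\tQz_k)^{-T}$, which obeys the same cyclic symmetry. It reaches $(\tQz_2)^{-T}(\tQz_{2+\frac1{n+1}})^{-T}\Pi$ before invoking anti-symmetry.)
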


\begin{proof}
As in the case of $\tE$ these properties follow from
the definition of $\tQz_k,\tMz_k$
and the symmetries of the functions $\Psiz_k,\Psii_k$.
The next theorem will give an independent proof,
together with a reformulation that will be useful in the next section.
\end{proof}

\begin{definition}\label{AdF}
Given $X\in\SL_{n+1}\C$ and a map
\[
F:\C^n\to \SL_{n+1}\C, \quad s=(s_1,\dots,s_n)\mapsto F(s)=F(s_1,\dots,s_n),
\]
we write $\Ad_{F(s)}X = F(s)XF(s)^{-1}$.
We shall take (in the next theorem)
$F=F_\si$ or $F=F_\th$ where
\[
F_\si(s)=
\begin{cases}
\hat\Pi^{(n+1)/2}  \quad\text{if $n$ is odd}
\\
(\tSz_1)^T   \quad\text{if $n$ is even}
\end{cases}
\quad
F_\th(s)=
\begin{cases}
\overline{\tilde C\tQz_{ {\scriptstyle\frac{n}{n+1}}  } }  \quad\text{if $n$ is odd}
\\
C   \quad\text{if $n$ is even.}
\end{cases}
\]
Note that $F_\si$ and $F_\th$ are indeed functions of $s$ (constant in two of the above four cases).
By Proposition \ref{stein},  points $s\in\C^n$ correspond bijectively to points $A\in \cM_{n+1}$, so we may, alternatively,  regard $F$ as a map
$F:\cM_{n+1}\to \SL_{n+1}\C$.
\end{definition}

\begin{theorem}\label{preservesM}  Consider the Stokes data $\tQz_k,\tMz$
defined in this section, assuming the cyclic symmetry condition
in Proposition \ref{tauMtilde}, but without assuming the anti-symmetry or $\theta$-reality
conditions in Proposition \ref{sithM}.

\no(i) For any $\tMz\in\cM_{n+1}$, we have $\Ad_{F_\si(s)} (\tMz)^{-T} \in \cM_{n+1}$.
If $\tMz$ corresponds to $(s_1,\dots,s_n)\in\C^n$, then $\Ad_{F_\si(s)} (\tMz)^{-T}$
corresponds to $(t_1,\dots,t_n)\in\C^n$,
where $t_i=s_{n-i+1}$.
Furthermore, $\Ad_{F_\si(s)} (\tMz)^{-T}=\tMz$ if and only if
the anti-symmetry condition of Proposition \ref{sithM} holds.

\no(ii)  For any $\tMz\in\cM_{n+1}$, we have
$\Ad_{F_\th(s)} (\overline{\tMz  })^{-1} \in \cM_{n+1}$.
If $\tMz$ corresponds to $(s_1,\dots,s_n)\in\C^n$, then $\Ad_{F_\th(s)} (\overline{\tMz  })^{-1}$
corresponds to $(t_1,\dots,t_n)\in\C^n$,
where $t_i=\bar s_{n-i+1}$.
Furthermore, $\Ad_{F_\th(s)} (\overline{\tMz  })^{-1}=\tMz$ if and only if
the $\th$-reality condition of Proposition \ref{sithM} holds.
\end{theorem}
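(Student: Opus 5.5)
Write $A=\tMz=\tQz_1\tQz_{1+\frac1{n+1}}P$, where $P=\hat\Pi$ ($n$ odd) or $P=\Pi$ ($n$ even). By Proposition \ref{tauMtilde}, cyclic symmetry lets us express every $\tQz_k$ as a $P$-conjugate of $\tQz_1$ or $\tQz_{1+\frac1{n+1}}$. The plan is to rewrite each of the two transformed matrices as a product of two consecutive Stokes factors times $P$, with the factors lying in the correct root subgroups. Then Definition \ref{scriptM} places the result in $\cM_{n+1}$. Reading off the entries via Proposition \ref{Qfroms} identifies the new parameters $t_i$. The ``if and only if'' statements then follow from Proposition \ref{stein}: two elements of $\cM_{n+1}$ coincide exactly when their parameters coincide. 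On the other side, the conditions of Proposition \ref{sithM} are equivalent (through Proposition \ref{Qfroms} and cyclic symmetry) to $s_i=s_{n-i+1}$ and $s_i=\bar s_{n-i+1}$ respectively.

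For (i), set $B_k=(\tQz_k)^{-T}$ and compute
\[
A^{-T}=B_1B_{1+\frac1{n+1}}P^{-T}.
\]
Since $P$ is a signed permutation matrix, $P^{-T}=P$. Conjugating by $F_\si$, I would show that $\Ad_{F_\si}A^{-T}=\tQz_1{}'\,\tQz_{1+\frac1{n+1}}{}'\,P$, where the primes denote the Stokes factors built from $t$ by Proposition \ref{Qfroms}.

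The case $n$ odd is the simpler one. Here $\hat\Pi^{(n+1)/2}$ commutes with $P$, and by cyclic symmetry conjugation by $\hat\Pi^{(n+1)/2}$ shifts indices by $1$. So $\Ad_{F_\si}B_k$ is the transpose-inverse of $\tQz_{k+1}$ written in terms of $s$. The root set of $(\tQz_{k+1})^{-T}$ is $-\cRz_{k+1}$, and this is $\cRz_k$ because $\cRz_{k+1}=-\cRz_k$ (opposite singular directions). A direct entry check with Proposition \ref{Qfroms} then shows the $(i,j)$ entry becomes $s_{n+1-(j-i)}$ with the correct sign. Hence $t_i=s_{n-i+1}$.

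For $n$ even, $F_\si=(\tSz_1)^T$, and I would use
\[
A^{n+1}=\tSz_1\tSz_2
\]
together with the identity $\tSz_1=\tQz_1\cdots\tQz_{1+\frac n{n+1}}$. Conjugation by $(\tSz_1)^T$ telescopes: $(\tSz_1)^T B_1B_{1+\frac1{n+1}} P(\tSz_1)^{-T}$ can be reorganized, using $P\tQz_kP^{-1}=\tQz_{k+\frac2{n+1}}$, into a product of two factors of the form $(\tQz_{k})^{T}$ followed by $P$. I would carry out this bookkeeping explicitly; it is the main obstacle.

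For (ii), the argument runs in parallel. We have
\[
\bar A^{-1}=P^{-1}\,\overline{\tQz_{1+\frac1{n+1}}}^{\,-1}\,\overline{\tQz_1}^{\,-1}.
\]
Conjugating by $C$ or $\tC$ reverses the indices $1,\dots,n$ (it fixes index $0$). The extra factor $\overline{\tQz_{\frac n{n+1}}}$ for $n$ odd is exactly what is needed to move $P^{-1}$ from the left to the right, via $P^{-1}X=(P^{-1}XP)P^{-1}$ and cyclic symmetry. One checks that $CP^{-1}C=P$, up to the sign matrix in the odd case, which accounts for $\tC$. Then Proposition \ref{Qfroms} gives $t_i=\bar s_{n-i+1}$.

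The main technical step I expect to be hardest is the sign and index bookkeeping, particularly in the even-$n$ anti-symmetry case with the non-constant $F_\si$ and in the odd-$n$ $\th$ case. I would first verify these on small cases ($n=1,2,3$) to fix conventions, and then argue in general through the root-set description $\cRz_{k+\frac2{n+1}}=\de\cdot\cRz_k$ of Remark \ref{coxeter}.
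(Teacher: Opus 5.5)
Your part (i) is essentially the paper's argument. Conjugation by $\hat\Pi^{(n+1)/2}$, or the telescoping against $(\tSz_1)^T$, turns $\Ad_{F_\si}(\tMz)^{-T}$ into $(\tQz_2)^{-T}(\tQz_{2+\frac1{n+1}})^{-T}P$. These are inverse transposes, not transposes, and the identity $\tSz_1\tSz_2=(\tMz)^{n+1}$ is not needed. Then $\cRz_{k+1}=-\cRz_k$ puts the factors in the correct root groups.

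\textbf{The gap is in (ii).} After the rearrangement you sketch (for $n$ odd, the extra factor in $F_\th$ cancels and $\hat\Pi^{-1}$ is moved across twice), you arrive at
\[
\tC(\overline{\tQz_{\frac{n-1}{n+1}}})^{-1}\tC\cdot\tC(\overline{\tQz_{\frac{n-2}{n+1}}})^{-1}\tC\cdot\hat\Pi\quad(n\text{ odd}),\qquad C(\overline{\tQz_{\frac{n}{n+1}}})^{-1}C\cdot C(\overline{\tQz_{\frac{n-1}{n+1}}})^{-1}C\cdot\Pi\quad(n\text{ even}).
\]
For this to lie in $\cM_{n+1}$, conjugation by $\tC$ (resp.\ $C$) must carry the right root sets onto the right ones. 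On root spaces this conjugation acts through the index reflection $i\mapsto n+1-i$ fixing $0$. It must send $\cRz_{\frac{n-1}{n+1}}$ onto $\cRz_1$ and $\cRz_{\frac{n-2}{n+1}}$ onto $\cRz_{1+\frac1{n+1}}$ for $n$ odd. For $n$ even it must send $\cRz_{\frac{n}{n+1}}$ onto $\cRz_1$ and $\cRz_{\frac{n-1}{n+1}}$ onto $\cRz_{1+\frac1{n+1}}$.

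You go straight from the rearrangement to ``Proposition \ref{Qfroms} gives $t_i=\bar s_{n-i+1}$''. But Proposition \ref{Qfroms} takes the sets $\cRz_k$ as given. Neither $\cRz_{k+1}=-\cRz_k$ nor the $\de$-equivariance of Remark \ref{coxeter} says anything about how the $\cRz_k$ behave under this reflection; those facts only relate the $\cRz_k$ to one another. The needed compatibility is a specific property of the particular root sets. The paper verifies it, in the form $\de^{-1}\cdot\cRz_1=\tC\,\cRz_1\tC$ and its analogues, from the explicit lists in Propositions 3.4 and 3.9 of \cite{GH1}.

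Relatedly, you misdescribe the role of the extra factor $\overline{\tQz_{\frac n{n+1}}}$ in $F_\th$ for $n$ odd. Cyclic symmetry moves $\hat\Pi^{-1}$ across with or without it. What the extra conjugation actually does is shift the Stokes indices by one more step, and without it the root sets would not line up.

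The same identity is silently needed in your route to the ``if and only if'' in (ii). Deriving ``$\th$-reality $\iff s_i=\bar s_{n-i+1}$'' from Proposition \ref{Qfroms} requires $\tC\,\cRz_{\frac{2n}{n+1}-k}\,\tC=\cRz_k$ (resp.\ $C\,\cRz_{\frac{2n+1}{n+1}-k}\,C=\cRz_k$).

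Once membership is established, the rest is simpler than you plan:
\begin{itemize}
\item The new parameters come from eigenvalues. Those of $\Ad_{F_\th}(\overline{\tMz})^{-1}$ are the conjugate reciprocals of those of $\tMz$, so the characteristic polynomial coefficients are reversed and conjugated, and Proposition \ref{stein} identifies the element.
\item The ``furthermore'' follows from uniqueness of the factorization in Definition \ref{scriptM}. Equality with $\tMz$ holds iff the two factors equal $\tQz_1$ and $\tQz_{1+\frac1{n+1}}$, and by cyclic symmetry this is the $\th$-reality condition for all $k$.
\end{itemize}
No entry-by-entry bookkeeping with Proposition \ref{Qfroms} is needed.
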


\begin{proof} (i) Let $n+1=2l$, $l\in\N$.  As $\tMz=
\tQz_1\tQz_{1+\scriptstyle\frac1{n+1}}\hat\Pi$, we have
\[
\Ad_{F_\si} (\tMz)^{-T}=
\hat\Pi^l (\tQz_1)^{-T} (\tQz_{1+\scriptstyle\frac1{n+1}})^{-T} \hat\Pi\ \hat\Pi^{-l}
\]
(note that $\hat\Pi^{-T}=\hat\Pi$).  The cyclic symmetry for $\tQz_k$
is
$\hat\Pi (\tQz_k)^{-T} \hat\Pi^{-1} = (\tQz_{k+\scriptstyle\frac1{l}})^{-T}$, so we obtain
\[
\Ad_{F_\si} (\tMz)^{-T}=
(\tQz_2)^{-T} (\tQz_{2+\scriptstyle\frac1{n+1}})^{-T} \hat\Pi.
\]
This belongs to $\cM_{n+1}$ because \ll transpose\rr sends positive roots to their negatives, i.e.\
it sends $\cR_k$ to $\cR_{k+1}$ (=$\cR_{k-1}$), as does the map
$\tQz_k\mapsto \tQz_{k+1}$.  Thus,
$(\tQz_2)^{-T}$ has the same root spaces as $\tQz_1$, and
$(\tQz_{2+\scriptstyle\frac1{n+1}})^{-T}$ has the same root spaces as
$\tQz_{1+\scriptstyle\frac1{n+1}}$.
(Note that \ll inverse\rr reverses the signs of root vectors, but does not change the root spaces.)

To determine the effect on $(s_1,\dots,s_n)$ it suffices (by Proposition \ref{stein}) to consider the
effect on the eigenvalues.  Evidently the eigenvalues of $\Ad_{F_\si} (\tMz)^{-T}$ are the reciprocals
of the eigenvalues of $\tMz$, so the coefficients of the characteristic polynomial are reversed.
Thus $\Ad_{F_\si} (\tMz)^{-T}$ corresponds to $(s_n,\dots,s_1)$, as required.

Let $n+1=2l+1$, $l\in\N$.   This time
$\tMz=
\tQz_1\tQz_{1+\scriptstyle\frac1{n+1}}\Pi$
and we have
$\Ad_{F_\si} (\tMz)^{-T}=$
\[
\left(
\tQz_1 \cdots \tQz_{1+\scriptstyle\frac n{n+1}}
\right)^T
\
(\tQz_1)^{-T} (\tQz_{1+\scriptstyle\frac1{n+1}})^{-T} \Pi
\
(\tQz_1)^{-T} \cdots (\tQz_{1+\scriptstyle\frac n{n+1}})^{-T}.
\]
Using the cyclic symmetry
$\Pi (\tQz_k)^{-T} \Pi^{-1} = (\tQz_{k+\scriptstyle\frac1{l}})^{-T}$
this reduces to
\[
\Ad_{F_\si} (\tMz)^{-T}=
(\tQz_2)^{-T} (\tQz_{2+\scriptstyle\frac1{n+1}})^{-T} \Pi
\]
which is again in $\cM_{n+1}$. Again the effect on  $(s_1,\dots,s_n)$ is
to reverse their order.

The \ll furthermore\rr statement follows from this calculation.
Namely, the proof shows that
$\Ad_{F_\si} (\tMz)^{-T}=\tMz$ if and only if
$(\tQz_2)^{-T}=\tQz_1$ and
$(\tQz_{2+\scriptstyle\frac1{n+1}})^{-T} = \tQz_{1+\scriptstyle\frac1{n+1}}$.
By the cyclic symmetry, this is equivalent to having
$\tQz_{k+1}=(\tQz_k)^{-T}$.
This completes the proof of (i).

(ii) Let $n+1=2l$, $l\in\N$. Then
\begin{align*}
\Ad_{F_\th} (\overline{\tMz  })^{-1}
&=
\overline{\tilde C\tQz_{ {\scriptstyle\frac{n}{n+1}}  } }
(\overline{\tQz_1\tQz_{1+ {\scriptstyle\frac{1}{n+1}}  }\hat\Pi}  )^{-1}
(\overline{\tilde C\tQz_{ {\scriptstyle\frac{n}{n+1}}  } })^{-1}
\\
&=
\overline{\tilde C\tQz_{ {\scriptstyle\frac{n}{n+1}}  } }
\hat\Pi^{-1}
(\overline{\tQz_{1+ {\scriptstyle\frac{1}{n+1}} } })^{-1}
(\overline{\tQz_1 })^{-1}
\overline{\tilde C\tQz_{ {\scriptstyle\frac{n}{n+1}}  } }^{-1}
\\
&=
\overline{\tilde C\tQz_{ {\scriptstyle\frac{n}{n+1}}  } }
\
(\overline{\tQz_{{\scriptstyle\frac{n}{n+1}} } })^{-1}
(\overline{\tQz_{{\scriptstyle\frac{n-1}{n+1}} } })^{-1} \hat\Pi^{-1}
\
(\overline{\tilde C\tQz_{ {\scriptstyle\frac{n}{n+1}}  } })^{-1}
\end{align*}
(using the cyclic symmetry at the last step).   This gives
\begin{align*}
\Ad_{F_\th} (\overline{\tMz  })^{-1}
&=
\tC
\
(\overline{\tQz_{{\scriptstyle\frac{n-1}{n+1}} } })^{-1} \hat\Pi^{-1}
\
(\overline{\tQz_{ {\scriptstyle\frac{n}{n+1}}  } })^{-1} \tilde C
\\
&=
\tC
\
(\overline{\tQz_{{\scriptstyle\frac{n-1}{n+1}} } })^{-1}
\
(\overline{\tQz_{ {\scriptstyle\frac{n-2}{n+1}}  } })^{-1}\hat\Pi^{-1} \tilde C
\end{align*}
(using the cyclic symmetry again).  This may be written
\[
\Ad_{F_\th} (\overline{\tMz  })^{-1}=
\tC (\overline{\tQz_{{\scriptstyle\frac{n-1}{n+1}} } })^{-1}\tC
\
\tC(\overline{\tQz_{{\scriptstyle\frac{n-2}{n+1}} } })^{-1}\tC
\
\hat\Pi
\]
using the identity $\tC \hat\Pi^{-1}\tC=\hat\Pi$.  It will follow that this belongs to $\cM_{n+1}$ if we show
(a) that $\tC (\overline{\tQz_{{\scriptstyle\frac{n-1}{n+1}} } })^{-1}\tC$
has the same root spaces as $\tQz_1$, and
(b) that $\tC(\overline{\tQz_{{\scriptstyle\frac{n-2}{n+1}} } })^{-1}\tC$
has the same root spaces as
$\tQz_{1+\scriptstyle\frac1{n+1}}$.

To prove (a) and (b) we use the fact --- easily established by direct calculation --- that the effect of $X\mapsto \tC X \tC$
on root spaces is
\[
(i,j)\mapsto
\begin{cases}
(n+1-i,n+1-j)\quad i,j\ne 0
\\
(0,n+1-j)\quad i= 0
\\
(n+1-i,0)\quad j= 0.
\end{cases}
\]
Here we use the convention of \cite{GH1} that $(i,j)$ denotes the root, or root space, of
$x_i-x_j$. (Note that \ll inverse\rr and \ll conjugate\rr do not change the root spaces.). We also use the fact (see Remark \ref{coxeter} above, and Corollary 5.5 of \cite{GH2}) that the
cyclic permutation $\de=(n \, n-1 \dots 1\, 0)$ sends
$\cRz_{k}$ to $\de\cdot \cRz_{k}=\cRz_{k+\scriptstyle\frac 2{n+1}}$.
Let us write $\tC \, \cRz_{k}\tC$ for the result of applying $X\mapsto \tC X \tC$
to (the root spaces of) $\cRz_{k}$.

To prove (a), we must show that
$\tC \, \cRz_{{\scriptstyle\frac{n-1}{n+1}} } \tC$ coincides
with $\de\cdot \cRz_{{\scriptstyle\frac{n-1}{n+1}} } (=  \cRz_{1})$.
Equivalently, we must show that
$\de^{-1}\cdot \cRz_{1} = \tC \, \cRz_{1} \tC$.
This condition involves only the set $\cRz_{1}$, which is given explicitly in Proposition 3.4 of \cite{GH1}.  Using this, it is easy to verify that the required condition holds.

To prove (b), we must show that
$\tC \, \cRz_{{\scriptstyle\frac{n-2}{n+1}} } \tC$ coincides
with $\de^2\cdot \cRz_{{\scriptstyle\frac{n-2}{n+1}} } (=  \cRz_{1+{\scriptstyle\frac{1}{n+1}}})$.
Equivalently, we must show that
$\de^{-2}\cdot \cRz_{1+{\scriptstyle\frac{1}{n+1}}}
= \tC \, \cRz_{1} \tC$. Again, this can be verified using the description of $\cRz_{1+{\scriptstyle\frac{1}{n+1}}}$
in Proposition 3.4 of \cite{GH1}.

The eigenvalues of $\Ad_{F_\th} (\overline{\tMz  })^{-1}$ are the reciprocals of the complex conjugates
of the eigenvalues of $\tMz$, so the coefficients of the characteristic polynomial are reversed and conjugated. Thus $\Ad_{F_\th} (\overline{\tMz  })^{-1}$
corresponds to $(\bar s_n,\dots,\bar s_1)$, as required.

Let $n+1=2l+1$, $l\in\N$. Then
\begin{align*}
\Ad_{F_\th} (\overline{\tMz  })^{-1}
&=
C(\overline{\tQz_1\tQz_{1+ {\scriptstyle\frac{1}{n+1}}  }\Pi}  )^{-1} C
\\
&=
C \Pi^{-1}
(\overline{\tQz_{1+ {\scriptstyle\frac{1}{n+1}} } })^{-1}
(\overline{\tQz_1 })^{-1} C
\\
&=
C
(\overline{\tQz_{{\scriptstyle\frac{n}{n+1}} } })^{-1}
(\overline{\tQz_{{\scriptstyle\frac{n-1}{n+1}} } })^{-1} \Pi^{-1}C
\end{align*}
(using the cyclic symmetry at the last step).  This may be written
\[
\Ad_{F_\th} (\overline{\tMz  })^{-1}=
C (\overline{\tQz_{{\scriptstyle\frac{n}{n+1}} } })^{-1}C
\
C(\overline{\tQz_{{\scriptstyle\frac{n-1}{n+1}} } })^{-1}C
\
\Pi
\]
using the fact that $C \Pi^{-1}C=\Pi$.  This belongs to $\cM_{n+1}$ by a similar argument
to that used in (i), i.e.\ by establishing analogues of (a) and (b) above.
Using analogous notation, this amounts to establishing
(a) $C \, \cRz_{1} C = \de^{-1}\cdot \cRz_{1+ {\scriptstyle\frac{1}{n+1}} }$
and
(b) $C \, \cRz_{1+ {\scriptstyle\frac{1}{n+1}} } C=
\de^{-1}\cdot \cRz_{1}$.
These can be verified by using the explicit expressions for
$\cRz_{1}$, $\cRz_{1+ {\scriptstyle\frac{1}{n+1}} }$
in Proposition 3.9 of \cite{GH1}.
Again the effect on  $(s_1,\dots,s_n)$ is
to reverse and take complex conjugates.

As in the case of (i), the \ll furthermore\rr statement follows from this calculation.
\end{proof}

In particular this theorem shows that the anti-symmetry condition of
Proposition \ref{sithM} is {\em equivalent} to the anti-symmetry condition of
Proposition \ref{basesymmetries}, and similarly for the $\th$-reality conditions.

\section{The space of monodromy data.}\label{data}

In this section we introduce the space of \ll all allowable monodromy data\rrr, modelled on the properties
of $\tMz$ and $\tE$ given in the previous section, and establish its relation with
the universal centralizer.

\subsection{The space $\cS_{n+1}^{\text{local}}$.}\label{locsol}

\begin{definition}\label{thespace}
Let
$
\cS_{n+1}^{\text{\em local}}
$
be the set of
$(B,A)$ in $\SL_{n+1}\C \times
\cM_{n+1}$
such that
$BA=AB$, and
such that
$B$ satisfies conditions (i) and (ii) in Proposition \ref{fibresymmetries}
with $B=\tE$, and
$A$ satisfies conditions (i) and (ii) in Proposition \ref{basesymmetries}
with $A=\tMz$.
We call $\cS_{n+1}^{\text{\em local}}$
the {\em space of  monodromy data} (of the
tt*-Toda equations).
\end{definition}

The conditions on $\tE$ and $\tMz$
in Proposition \ref{fibresymmetries} and Proposition \ref{basesymmetries}
involve additional matrices
$\tQz_{ {\scriptstyle\frac{n}{n+1}}  }$ and
$\tSz_1 = \tQz_1 \cdots \tQz_{1+\scriptstyle\frac n{n+1}}$, but
these matrices depend on $s$, so we can interpret both of them
as functions of $s$, or, equally, of  $A=\tMz$. Thus
the above definition makes sense.

We have (other than in Remark \ref{localzero}) not yet given any explicit relation between
local solutions of (\ref{ost}) and their corresponding monodromy data.
To justify the notation $\cS_{n+1}^{\text{local}}$,
we make some comments on this, below. From now on, however, we shall
study the space $\cS_{n+1}^{\text{local}}$ in its own right, without
using any properties of the corresponding solutions.

\begin{remark}\label{remarkX.1}
(i) As the notation suggests, $\cS_{n+1}^{\text{local}}$ can
be regarded as the set of values of $(\tE,\tMz)$
which actually arise from local solutions of (\ref{ost}), or rather, from meromorphic solutions (possibly
interpreted as meromorphic sections of holomorphic bundles) together with a reality condition.
This is a consequence of the
well-posedness of the Riemann-Hilbert problem (as formulated in
\cite{GILX}, section 7).  To give a precise statement is
beyond the scope of this article, but we note that
a Riemann-Hilbert correspondence for general meromorphic connections
(without any reality conditions)
was given by Boalch \cite{Bo01} (in
the analytic context) and by Inaba and Saito \cite{IS13} (in
the algebraic context).

To write the equivalence in more explicit terms
and give a precise description of $\cS_{n+1}^{\text{local}}$
it would be necessary to relate all solutions
explicitly with their monodromy data. To our knowledge, this has been
carried out (for the tt*-Toda equations)
only in the case $n=1$, in \cite{GH17}. There, a stratification
of $\cS_{n+1}^{\text{local}}$ was given, each stratum corresponding
to a particular form of asymptotic behaviour of the solution at $t=0$ and $t=\infty$.
For $n=1$ this analysis shows that there is one stratum which does
{\em not} correspond to {\em any} local (real) solutions of (\ref{ost}),
namely the stratum given by $E_1=- E_1^{\id}$ and $s_1\in [-2,2]$.
Such monodromy data corresponds to solutions which take
values in $\frac12\pi\ii + \R$. We shall accept such solutions as \ll local solutions
of (\ref{ost})\rr (also when $n>1$).

(ii) The
solutions which are smooth near $t=0$ have been studied in detail, in
\cite{GIL1}-\cite{GuLi14} and \cite{GH1}-\cite{GH2}.  The \ll generic\rr solutions
of this kind form an open subset of $\cS_{n+1}^{\text{local}}$,  and for these
solutions all monodromy data (in particular, the matrices $\tMz$ and $\tE$) was computed
explicitly in terms of the asymptotics of the solutions at $t=0$.
The
solutions which are smooth near $t=\infty$ were treated similarly,
although these do not contain any open subset of $\cS_{n+1}^{\text{local}}$.
Although we are leaving open the problem of characterizing all points of $\cS_{n+1}^{\text{local}}$
in terms of solutions of (\ref{ost}) when
$n>1$, we would like to emphasize that $\cS_{n+1}^{\text{local}}$ is the natural space
to investigate geometrically.
\qed
\end{remark}

\begin{remark}\label{remarkX.2}
We have omitted $c$-reality (condition (iii) of Proposition \ref{fibresymmetries})
from the definition of $\cS_{n+1}^{\text{local}}$.
This is partly because the $c$-reality condition is not relevant to $\tMz$ (it was used
to reduce the Stokes data at $\ze=\infty$ to that at $\zeta=0$), and partly because the
$c$-reality condition for $\tE$ turns out to be automatically satisfied, at least in the case of solutions which are smooth near $t=0$.
This follows from the explicit formula for $E_1$ given in section 3 of \cite{GILX}.
Such solutions
form an open subset of $\cS_{n+1}^{\text{local}}$.
Furthermore, we shall prove in the next section that $\cS_{n+1}^{\text{local}}$ is a smooth
(real) algebraic manifold. If (as we expect, and have verified in low dimensions) it turns
out to be connected, then it would follow 
that the $c$-reality condition holds automatically on the whole of $\cS_{n+1}^{\text{local}}$.
\qed
\end{remark}

\subsection{The universal centralizer.}\label{uc}

\begin{definition}\label{ucdef} (\cite{Lu76}) Let $\Sigma$ be any Steinberg cross section of
$\SL_{n+1}\C$.
The universal centralizer with respect to $\Sigma$ is the algebraic variety
$
\cZ_{n+1}^\Sigma=
\{ (B,A) \st
B\in  \SL_{n+1}\C,
A\in\Sigma,
BA=AB
\}
$.
\end{definition}

Thus  $\cZ_{n+1}^\Sigma$ is the union of the centralizers of the elements
of $\Sigma$.
The standard example of  a Steinberg cross section of $\SL_{n+1}\C$ is
the space of \ll companion matrices\rrr.

\begin{remark}\label{remarkX.X} The authors of \cite{BFM} (2024 version)
remarked that the natural map from the universal centralizer to the quasi-Hamiltonian reduction of the internal fusion double is a birational isomorphism. 
This is the character variety $\Hom(\pi_1(\Sigma_1),G)/G$, where $\Sigma_1$ denotes here the genus $1$ torus. 
In fact, more can be said. The natural map identifies the universal centralizer with the non-Hausdorff space $\{ (A,B)\in G^\reg\times G ~|~AB=BA\}/G$. 
However, the image is not (fully) contained in the smooth part of the quasi-Hamiltonian reduction, which requires the representations to be reductive in order to have a Hausdorff quotient(cf.\ \cite{Go84}). This requires  $A$ and $B$ to be semisimple, but in the monodromy data of the tt* equations the non-semisimple points play an important role.
\qed
\end{remark}

By Proposition \ref{stein},
the space $\cM_{n+1}$ (in Definition \ref{scriptM})
is a Steinberg cross section, so
our space $\cS_{n+1}^{\text{local}}$ is a subspace of
$\cZ_{n+1}^{\cM_{n+1}}$. To simplify notation, from now on we shall just write
$\cZ_{n+1}$ for $\cZ_{n+1}^{\cM_{n+1}}$.
We shall describe the subspace precisely,  in terms of the following maps.

\begin{definition}\label{morphism} Let $\cZ_{n+1}$ be the
universal centralizer with respect to $\Sigma=\cM_{n+1}$.
We define maps $\bsi,\bth:\cZ_{n+1}\to \cZ_{n+1}$ by
\[
\bsi(B,A)= (\Ad_{F_\si(A)}  B^{-T}, \Ad_{F_\si(A)}  A^{-T})
\]
and
\[
\bth(B,A)= (\Ad_{F_\th(A)} \bar B, \Ad_{F_\th(A)} \bar A^{-1}),
\]
where
$F_\si$ and $F_\th$ are as in Definition \ref{AdF} (and we use the convention explained there
of writing $F(A)$ instead of $F(s)$).
Restricting to the second components, we define maps $\bsi_0,\bth_0:\cM_{n+1}\to \cM_{n+1}$ by
$\bsi_0(A)=\Ad_{F_\si(A)}  A^{-T}$,
$\bth_0(A)=\Ad_{F_\th(A)} \bar A^{-1}$.
\end{definition}

The maps $\bsi,\bth$ are well-defined because
(a)  if $B,A$ commute,
then so do  $B^{-T},A^{-T}$ and  $\bar B,\bar A^{-1}$, and
(b) by Theorem \ref{preservesM}
their second components $\bsi_0,\bth_0$ preserve $\cM_{n+1}$.

Let us denote the fixed point sets of $\bsi,\bth$ by
\[
\Fix \bsi = \cZ_{n+1}^{\bsi},\quad
\Fix \bth = \cZ_{n+1}^{\bth}.
\]
Then our space of monodromy data is
\[
\cS_{n+1}^{\text{local}}=\cZ_{n+1}^{\bsi} \cap \cZ_{n+1}^{\bth}
\quad (=\cZ_{n+1}^{\bsi,\bth}, \ \text{for brevity} ).
\]
This formulation will be used in the next section.

It should be noted that, although the original maps $\si,\th$
are induced by standard Lie group/Lie algebra involutions,
the maps $\bsi,\bth$ on monodromy data (in two of the four cases) are not. It is not immediate
--- in fact, it may at first seem unlikely --- that they are involutions, but in fact they are, and
we end this section by giving a proof.

\begin{theorem}\label{comminv}
The maps $\bsi,\bth$ are (commuting) involutions on $\cZ_{n+1}$.
\end{theorem}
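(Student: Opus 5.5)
Both maps have the form $(B,A)\mapsto(\Ad_{F(A)}\phi(B),\Ad_{F(A)}\psi(A))$, where $\phi,\psi$ are standard group (anti)automorphisms. Writing $A'=\bsi_0(A)$, applying $\bsi$ twice gives
\[
\bsi^2(B,A)=\bigl(\Ad_{F_\si(A')}\Ad_{F_\si(A)^{-T}}B,\ \Ad_{F_\si(A')}\Ad_{F_\si(A)^{-T}}A\bigr),
\]
since $(\Ad_X Y^{-T})^{-T}=\Ad_{X^{-T}}Y$. So both components are conjugated by the \emph{same} element $G_\si(A)=F_\si(\bsi_0(A))F_\si(A)^{-T}$. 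The analogous computation for $\bth$ gives $G_\th(A)=F_\th(\bth_0(A))\overline{F_\th(A)}$. The plan is therefore in two steps. First, show that $\bsi_0,\bth_0$ are involutions of $\cM_{n+1}$. Second, show that $G_\si(A)$ and $G_\th(A)$ commute with $A$, or at least act trivially on every $B$ in the centralizer of $A$.

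The first step is immediate from Theorem \ref{preservesM}. In terms of $s$, $\bsi_0$ is $t_i=s_{n-i+1}$ and $\bth_0$ is $t_i=\bar s_{n-i+1}$, both of which are involutions. They also commute, by Proposition \ref{stein}. For the second step, note that $\Ad_{G(A)}A=A$ holds automatically, because $\bsi_0^2(A)=A$ and the second component of $\bsi^2(B,A)$ is exactly $\Ad_{G_\si(A)}A$. Since $A$ is regular, its centralizer in $\SL_{n+1}\C$ is abelian: for $\GL$ it consists of the polynomials in $A$. Hence $G(A)$ lies in this centralizer, provided $G(A)\in\SL_{n+1}\C$, which we may arrange by scaling. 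It follows that $\Ad_{G(A)}B=B$ for every $B$ commuting with $A$. This gives $\bsi^2=\id$ and $\bth^2=\id$ with no explicit matrix computation, which is the point of the groupoid viewpoint. (If a determinant issue arises, we rescale $F$ by a scalar; this does not change $\Ad$.)

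For commutativity, take $A''=\bth_0(\bsi_0(A))=\bsi_0(\bth_0(A))$. Both $\bth\bsi(B,A)$ and $\bsi\bth(B,A)$ have the form $(\Ad_{H}\,\overline{B}^{-T},\ \Ad_{H}\,\overline{A}^{-T})$. For $\bth\bsi$ we get $H_1=F_\th(\bsi_0 A)\,\overline{F_\si(A)}$. For $\bsi\bth$ we get $H_2=F_\si(\bth_0 A)\,\overline{F_\th(A)}^{-T}$, using $(\Ad_X\bar Y^{-1})^{-T}=\Ad_{X^{-T}}\bar Y^{T}$ and so on. One should check the inverse/transpose bookkeeping carefully, but in each case the outer operation applied to $B$ and to $A$ is the same. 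The second components agree because $\bsi_0\bth_0=\bth_0\bsi_0$. Hence $H_1^{-1}H_2$ centralizes $\overline{A}^{-T}$, which is again a regular element, and the same argument as above shows that $H_1^{-1}H_2$ centralizes $\overline{B}^{-T}$. Therefore $\bth\bsi=\bsi\bth$.

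The main obstacle is to confirm that the operation applied to $B$ and the operation applied to $A$ compose in exactly the same way, so that a single conjugating element appears. For $\bth$ this requires care: $B\mapsto\bar B$ but $A\mapsto\bar A^{-1}$. Here $\bth^2$ gives $\Ad_{F_\th(A')}\Ad_{\overline{F_\th(A)}}$ applied to $\overline{\bar B}=B$ in the first component, and to $\overline{\overline{A}^{-1}}^{\,-1}=A$ in the second. So the conjugating element is the same, and the mismatch between $B$ and $A^{-1}$ is harmless because inversion commutes with conjugation. Once this bookkeeping is verified, the regularity of $A$ together with the abelian centralizer does the rest, so no explicit formulas for $\tQz_k$ are needed.
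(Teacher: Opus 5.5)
Your proof is correct, and it takes a genuinely different route from the paper's. The paper computes each conjugating element explicitly from the Stokes-factor identities extracted from the proof of Theorem \ref{preservesM}:
\begin{itemize}
\item for $\bsi^2$ it gets $-I$ when $n$ is odd, and $(\tSz_1\tSz_2)^{-1}=A^{-(n+1)}$ when $n$ is even;
\item for $\bth^2$ it gets $I$;
\item for commutativity it gets $P=-Q$ when $n$ is odd, and $\overline{P^{-1}Q}=(\tSz_1\tSz_2)^{-T}$ when $n$ is even.
\end{itemize}
In each case the resulting element visibly commutes with everything in sight.

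You instead note that each conjugating element centralizes $A$ (resp.\ $\bar A^{T}$) automatically. This follows because $\bsi_0,\bth_0$ are commuting involutions of $\cM_{n+1}$, by the reversal/conjugation of $s$ in Theorem \ref{preservesM} together with the bijection of Proposition \ref{stein}. You then use that the centralizer of a regular (nonderogatory) matrix in $M_{n+1}(\C)$ is $\C[A]$, which is commutative, so the element also centralizes $B$.

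This removes the parity case analysis and all explicit matrix identities. It is also more general: for any Steinberg section, a lift $(B,A)\mapsto(\Ad_{F(A)}\phi(B),\Ad_{F(A)}\psi(A))$ of a base involution is automatically an involution, whatever $F$ is. Likewise, commuting base involutions of this type automatically lift to commuting maps. The paper's route gives in addition the exact identity of the conjugating element, but that is not needed for the statement.

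Three small bookkeeping points, none of which affects your argument:
\begin{itemize}
\item Both compositions actually have the form $(\Ad_H\bar B^{-T},\Ad_H\bar A^{T})$, not $\bar A^{-T}$. This is harmless: $\bar A^{T}$ is regular, has the same centralizer as its inverse, and commutes with $\bar B^{-T}$.
\item Your own identity gives $H_2=F_\si(\bth_0(A))F_\th(A)^{-T}$, without the bar. This does not matter, since you only use the existence of $H_1,H_2$.
\item The rescaling into $\SL_{n+1}\C$ is unnecessary. Any invertible matrix commuting with $A$ already lies in $\C[A]$.
\end{itemize}
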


\begin{proof}
Proof of $\bsi\circ\bsi=\id$:
When $n$ is odd we have $\FA=\phn$. As $(\phn)^2=-I$, it is obvious that
$\bsi\circ\bsi(B,A)=(B,A)$.
When $n$ is even we have $F_\si(A)=(\tSz_1)^T$.
By Theorem \ref{preservesM},
$F_\si(\bsi_0(A))$ is the result of reversing $s=(s_1,\dots,s_n)$ in $F_\si(A)$;
let us denote this by $(\tSzr_1)^T$.
Now, the proof of Theorem \ref{preservesM} shows that
$
\tQzr_{k } = (\tQz_{k+1})^{-T}
$
and hence
$
\tSzr_{1} = (\tSz_{2})^{-T}.
$
Using this, we obtain
\[
F_\si(\bsi_0(A)) F_\si(A)^{-T} = (\tSz_{2})^{-1} (\tSz_{1})^{-1} = (\tSz_{1} \tSz_{2})^{-1}.
\]
By equation (\ref{tMz^n+1}) we have
$\tSz_1 \tSz_2=A^{n+1}$. This commutes with both $A$ and $B$, so the result follows.

Proof of $\bth\circ\bth=\id$: When $n$ is even we have $F_\th(A)=C$. As $C^2=I$, it is obvious that
$\bth\circ\bth(B,A)=(B,A)$.
When $n$ is odd, we have $F_\th(A)=
\tC \overline{\tQz_{{\scriptstyle\frac{n}{n+1}} } }$. By Theorem \ref{preservesM},
$F_\th(\bsi_0(A))$ is $\tC \tQzr_{{\scriptstyle\frac{n}{n+1}} }$. This gives
\[
F_\th(\bsi_0(A)) \overline{F_\si(A)} = \tC \tQzr_{{\scriptstyle\frac{n}{n+1}} }
\tC \tQz_{{\scriptstyle\frac{n}{n+1}} }.
\]
The proof of Theorem \ref{preservesM} shows that
$\tC
\left(
\tQzr_{ \scriptstyle\frac{n}{n+1}}
\right)^{-1}
\tC
=
\tQz_{  \scriptstyle\frac{n}{n+1} }.
$
Thus $F_\th(\bsi_0(A)) \overline{F_\si(A)}=I$, and the result follows.

Finally we prove that $\bsi\circ\bth=\bth\circ\bsi$. Let us write
\[
\bth\circ\bsi(B,A)= (\Ad_P \, (\bar B)^{-T} \!, \Ad_P \, (\bar A)^{T} \!,
\bsi\circ\bth(B,A)= (\Ad_Q \, (\bar B)^{-T} \!, \Ad_Q\,  (\bar A)^{T})
\]
where
$
P=F_\th(\bsi_0(A)) \overline{F_\si(A)}
$,
$
Q= F_\si(\bth_0(A)) F_\th(A)^{-T}
$.

Proof of $\bsi\circ\bth=\bth\circ\bsi$ when $n$ is odd: We have $F_\si(A)=\phn$ and $F_\th(A)=
\tC \overline{\tQz_{{\scriptstyle\frac{n}{n+1}} } }$. By Theorem \ref{preservesM},
$F_\th(\bsi_0(A))$ is $\tC \overline{\tQzr_{{\scriptstyle\frac{n}{n+1}} } }$.
The proof of Theorem \ref{preservesM} shows that
$
\tQzr_{{\scriptstyle\frac{n}{n+1}} } = (\tQz_{{\scriptstyle 1+\frac{n}{n+1}} })^{-T}.
$
Using this, we obtain
\[
\bar P= \tC \, (\tQz_{{\scriptstyle 1+\frac{n}{n+1}} })^{-T} \ \phn,\quad
\bar Q= \phn \tC \, (\tQz_{{\scriptstyle \frac{n}{n+1}} })^{-T}.
\]
We claim that $P=-Q$, which will complete the proof.  The claim follows from the cyclic symmetry
$\tQz_{k+\scriptstyle\frac2{n+1}} = \hat\Pi \tQz_k \hat\Pi^{-1}$ and the
identities
$\tC \hat\Pi \tC=\hat\Pi^{-1}$, $\hat\Pi^2=-I$.

Proof of $\bsi\circ\bth=\bth\circ\bsi$ when $n$ is even:
We have
\[
\bar P= C (\tSz_1)^T,
\quad
\bar Q= (\tSzr_1)^T C.
\]
This time a direct computation of $P^{-1}Q$ is necessary. To evaluate $(\tSzr_1)^T$,
we use the fact that the (method of) proof of Theorem \ref{preservesM} shows that
\[
C
\left(
\tQzr_{ \scriptstyle\frac{2n+1}{n+1}-k}
\right)^{-1}
C
=
\tQz_k.
\]
(In fact the proof of Theorem \ref{preservesM} establishes the cases $k=1$ and $k=
1+\scriptstyle\frac1{n+1}$, then the general case follows from cyclic symmetry.)
Writing  $\tSz_1=\tQz_1 \cdots \tQz_{1+\scriptstyle\frac n{n+1}}$, this
leads to $C  \tSzr_1 C= (\tSz_0)^{-1}$, and we obtain
\[
\overline{P^{-1}Q}= (\tSz_1)^{-T} (\tSz_0)^{-T} \ \ (=  (\tSz_1 \tSz_2)^{-T}).
\]
Equation (\ref{tMz^n+1}) gives
$\tSz_1 \tSz_2=A^{n+1}$, which commutes with both $A$ and $B$. This completes the proof.
\end{proof}

\section{Main Results}\label{symp}

In this section we construct a symplectic structure for our
space of monodromy data $\cS_{n+1}^{\text{local}}$. We shall show
that it is in fact a symplectic groupoid, so we begin with some preparations
on this topic.

\subsection{Groupoids.}\label{gpd}

A {\em groupoid} $\cG\rra X$ consists of two sets $\cG$, the set of arrows (called the \ll groupoid\rrr),  
and $X$, the set of units (called the \ll base\rrr), together with
\ll source\rr and \ll target\rr maps
$s,t:\cG\to X$, an object 
inclusion map
$\eps: X\to\cG$, an inverse map $\imath:\cG\to\cG$, and a
\ll partial multiplication map\rr $\m$, subject to a system of axioms.
Roughly speaking, the axioms describe a group whose group operation is only partially defined.

The product $\m(g,h)$ of two elements $g,h\in\cG$ is
defined when  $s(g) =t(h)$. In this case  $g,h$ are said to
be composable, and the set of composable pairs $(g,h)$ is
denoted by $\cG^{(2)}$.
The product $\m(g,h)$ is often denoted by $gh$.
For any $x\in X$, the element $\eps(x)$ is required to act as a unit element
for multiplication,
and is often denoted by $1_x$.

The groupoid is said to be a
{\em Lie groupoid} when $\cG,X,\cG^{(2)}$ are smooth manifolds,
the structure maps $s,t,\eps,\m$ are smooth,  $s,t$ are surjective submersions,
and $\eps$ is an embedding.

To relate two Lie groupoids, there is a notion of Lie groupoid morphisms.

\begin{df}
Let $\cG'$ and $\cG$ be Lie groupoids on $X’$ and $X$ respectively. A Lie groupoid morphism is a pair of smooth maps $F:\cG' \to \cG$, $f:X'\to X$ such that $s\circ F=f\circ s'$, $t\circ F=f\circ t'$ and $F(\fm(h',g'))=\fm(F(h'),F(g'))$, for all $(h',g')\in (\cG')^{(2)}$. Moreover, if $F$ and $f$ are injective immersions, then $\cG'\rightrightarrows X'$ is called a Lie subgroupoid of $\cG\rightrightarrows X$.
\end{df}

The {\em Lie algebroid} of the Lie groupoid is the
normal bundle $A$ to the image of the embedding $\eps$.
If we identify the fibre $A_x$ over $x\in X$
with $T_x s^{-1}(x)$,  then $dt$ gives a map $\rho:A\to TX$, called
the anchor map.
We refer to \cite{Ma01} for the basic theory of Lie groupoids.

Let $Y\subset X$ be a (embedded) submanifold of $X$. Then
$\cG^Y_Y:=s^{-1}(Y)\cap t^{-1}(Y)$ is a groupoid over $Y$ with the source map and target map being the restriction of  $s$ and $t$ on $\cG^Y_Y$ respectively. We have the following
well known
criterion for $\cG^Y_Y \rightrightarrows Y$ to be a Lie groupoid.
\begin{pro}\label{pro:NLiegroupoid}
If
\[ T_{x}Y+\rho(A_x)=T_x X, \quad \mbox{for all }x\in Y,\]
where $\rho:A:=\cup A_x \to TX$ is the anchor map for the associated Lie algebroid, then $\cG^Y_Y:=s^{-1}(Y)\cap t^{-1}(Y)$ is a Lie groupoid over $Y$.
Furthermore, $\cG^Y_Y\rightrightarrows Y$ is a Lie subgroupoid of $\cG\rightrightarrows X$.
\qed
\end{pro}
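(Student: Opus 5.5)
The plan is to realise $\cG^Y_Y$ as a transverse preimage, show that its structure maps are restrictions of those of $\cG$, and then check the Lie groupoid axioms.

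\textbf{Step 1: $\cG^Y_Y$ is a submanifold.} Consider the smooth map $(t,s):\cG\to X\times X$. Then $\cG^Y_Y=(t,s)^{-1}(Y\times Y)$. It suffices to show that $(t,s)$ is transverse to $Y\times Y$ at every $g\in\cG^Y_Y$. Put $x=s(g)$ and $y=t(g)$.
\begin{itemize}
\item Right translation by $g$ identifies $t^{-1}(x)$ near $1_x$ with $t^{-1}(x)\cdot g$. Hence $ds\bigl(T_g(t^{-1}(y))\bigr)$ is the image under $ds$ of the tangent space to the $t$-fibre at $1_x$. Via inversion this image is $\rho(A_x)$.
\item Symmetrically, $dt\bigl(T_g(s^{-1}(x))\bigr)=\rho(A_y)$.
\end{itemize}
Since $s$ is a submersion, we can first choose a vector hitting any prescribed $s$-component. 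We then correct its $t$-component using vectors tangent to $s^{-1}(x)$, which changes $dt$ by elements of $\rho(A_y)$. Therefore
\[
d(t,s)(T_g\cG)\supseteq \rho(A_y)\times T_xX .
\]
By the same argument with the roles of $s$ and $t$ exchanged,
\[
d(t,s)(T_g\cG)\supseteq T_yX\times \rho(A_x).
\]
The hypothesis $T_xY+\rho(A_x)=T_xX$, applied at both $x$ and $y$, then gives
\[
d(t,s)(T_g\cG)+T_yY\times T_xY=T_yX\times T_xX .
\]
So transversality holds, and $\cG^Y_Y$ is an embedded submanifold of codimension $2\,\mathrm{codim}\,Y$.

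\textbf{Step 2: the restricted source and target are submersions.} We need $s|:\cG^Y_Y\to Y$ to be a submersion. Given $v\in T_xY$, we lift it to some $w\in T_g\cG$ with $ds(w)=v$. The component $dt(w)$ need not lie in $T_yY$. We correct $w$ by a vector $u$ tangent to $s^{-1}(x)$ so that $dt(w+u)\in T_yY$. This is possible because $dt\bigl(T_g s^{-1}(x)\bigr)=\rho(A_y)$ and $T_yY+\rho(A_y)=T_yX$. By Step 1, $w+u$ is tangent to $\cG^Y_Y$. The argument for $t|$ is the same. Surjectivity of $s|$ and $t|$ is clear, since $1_x\in\cG^Y_Y$ for every $x\in Y$.

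\textbf{Step 3: structure maps and the subgroupoid property.} The multiplication, inverse and unit maps preserve $\cG^Y_Y$, so they restrict to it. Their restrictions are smooth because $\cG^Y_Y$ is embedded. The space of composable pairs $(\cG^Y_Y)^{(2)}$ is a fibre product over the submersion $s|$, hence a manifold. The unit map $Y\to\cG^Y_Y$ is an embedding. Finally, the inclusions $\cG^Y_Y\hookrightarrow\cG$ and $Y\hookrightarrow X$ are injective immersions that intertwine all structure maps, so $\cG^Y_Y\rightrightarrows Y$ is a Lie subgroupoid of $\cG\rightrightarrows X$.

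The main obstacle is Step 1: identifying $dt\bigl(T_g s^{-1}(x)\bigr)$ with $\rho(A_{t(g)})$ via translation, and getting the roles of $s$ and $t$ right.
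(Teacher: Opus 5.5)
Your overall route is fine: realise $\cG^Y_Y=(t,s)^{-1}(Y\times Y)$ as a transverse preimage, then check that the restricted source and target are submersions. The paper states the criterion as well known and gives no proof, so your argument has to stand on its own. Steps 2 and 3 are correct. Step 1, however, asserts something false.

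\textbf{The error in Step 1.} Your lift-and-correct argument only shows this: for each $v\in T_xX$ there is some $u_0$ with $(u_0+\rho(A_y))\times\{v\}\subseteq d(t,s)(T_g\cG)$. Nothing forces $dt(w)$ into $\rho(A_y)$, so you do not get $\rho(A_y)\times T_xX\subseteq d(t,s)(T_g\cG)$. The claim genuinely fails. At a unit one has $d(t,s)(T_{1_x}\cG)=\{(v+\rho(a),v): v\in T_xX,\ a\in A_x\}$, which contains $(0,v)$ only when $v\in\rho(A_x)$. Moreover, your two displayed containments together would give $d(t,s)(T_g\cG)=T_yX\times T_xX$, i.e.\ $(t,s)$ would be a submersion. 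That is impossible unless the anchor is surjective. It already fails in the paper's own application, where $\rho(A_a)=T_a\cC_a$ is a proper subspace of $T_aG$ (and on $\cZ$ one even has $s=t$).

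\textbf{The fix.} Use the same manoeuvre as in your Step 2. What is true is that $d(t,s)(T_g\cG)$ contains $\rho(A_y)\times\{0\}$ (images of $s$-vertical vectors) and projects onto $T_xX$ (since $s$ is a submersion). Given $(u,v)\in T_yX\times T_xX$:
\begin{itemize}
\item choose $w$ with $ds(w)=v$;
\item use $T_yY+\rho(A_y)=T_yX$ to write $u-dt(w)=p+r$ with $p\in T_yY$ and $r\in\rho(A_y)$;
\item choose $z$ tangent to $s^{-1}(x)$ with $dt(z)=r$.
\end{itemize}
Then $(u,v)=d(t,s)(w+z)+(p,0)$ with $(p,0)\in T_yY\times T_xY$. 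This gives the transversality you need, and in fact uses the hypothesis only at $y=t(g)$. The rest of your proof then goes through unchanged.

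\textbf{A smaller slip.} In your first bullet the translation is on the wrong side. It is left translation $h\mapsto gh$ that carries $t^{-1}(x)$ onto $t^{-1}(y)$, with $1_x\mapsto g$ and $s(gh)=s(h)$. The expression $t^{-1}(x)\cdot g$ is not defined in general.
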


Next, let us consider 2-forms on Lie groupoids.

\begin{df}
Let $\cG\rightrightarrows X$ be a Lie groupoid and $\omega$ a 2-form on $\cG$. The 2-form $\omega$ is called multiplicative if the graph of the multiplication $\Lambda\subset \cG\times \cG\times \bar{\cG}$ is isotropic, i.e.\ if it satisfies  $\fm^*\omega=\pr_1^*\omega+\pr_2^*\omega$, where $\pr_i$ are the natural projection onto $G$. If $\phi$ be a closed 3-form on $X$ then $\omega$ is called $\phi$-relatively closed if $d\omega=s^*\phi-t^*\phi$.
\end{df}

\begin{df} \cite{Xu04}
A Lie groupoid $\cG\rightrightarrows X$ together with a multiplicative 2-form $\omega\in \Omega^2(\cG)$ and a closed 3-form $\phi\in \Omega^3(X)$ such that $\omega$ is $\phi$-relatively closed is called a pre-quasi-symplectic groupoid. If in addition, one requires that $\dim\cG=2\dim X$ and the intersection $\ker(ds)\cap\ker(dt)\cap\ker(\omega)$ vanishes, then it is called a quasi-symplectic groupoid (or a $\phi$-twisted presymplectic groupoid by \cite{BCWZ}).
\end{df}

\begin{ex} The AMM groupoid $G\times G\rightrightarrows G$ is a quasi-symplectic groupoid (\cite{AMM}, \cite{Xu04}). 
Let $G$ be a complex semisimple Lie group, and let $(.,.)$ be an Ad-invariant non-degenerate symmetric bilinear form on $\fg=\Lie(G)$.
Consider the action groupoid $G\times G\rightrightarrows G$, i.e.\ its structure maps are given by $t(g,a)=gag^{-1}$, $s(g,a)=a$, $\epsilon(a)=(id_G,a)$ and $( (g,a), (h,b) )\in (G\times G)^{(2)}$ if $a=hbh^{-1}$.
Denote the left and right invariant Maurer-Cartan forms on $G$ by $\theta^L$ and $\theta^R$ respectively.
Let $\chi\in \Omega^3(G)$ denote the bi-invariant closed 3 form on $G$ given by
\[\chi=\tfrac{1}{12}(\theta^L,[\theta^L,\theta^L])=\tfrac{1}{12}(\theta^R,[\theta^R,\theta^R]). \]
Consider the $2$-form\footnote{Note that this form matches the 2-form in \cite{AMM} Remark 3.2, and has a sign difference from the 2-form in \cite{Xu04}.} $\omega\in\Omega^2(G\times G)$:
\begin{equation}\label{eq:quasi2form}
\omega_{(g,a)}=\tfrac{1}{2}[(\Ad_a\pr_1^*\theta^L,\pr_1^*\theta^L)+(\pr_1^*\theta^L,\pr^*_2(\theta^L+\theta^R))]
\end{equation}
where $(g,a)\in G\times G$ and $\pr_1,~ \pr_2:G\times G \to G$ are the projections. Direct computation shows that $\omega$ is multiplicative and $\chi$-relatively closed, and that $\ker(ds)\cap\ker(dt)\cap\ker(\omega)$ vanishes. The dimension condition is clearly satisfied, thus it is a quasi-symplectic groupoid (\cite{Xu04}).
\qed
\end{ex}

One can check easily that the properties of $\omega$ being $\phi$-relatively closed and $\omega$ being multiplicative are both preserved under Lie groupoid morphisms:

\begin{lm}\label{lm:multiplicative}
Let $\cG'\rightrightarrows X'$ and $\cG \rightrightarrows X$ be Lie groupoids, and let the pair $F:\cG'\to\cG$, $f:X'\to X$ be a Lie groupoid morphism. If $\omega\in\Omega^2(\cG)$ be a multiplicative $2$-form on $\cG\rightrightarrows X$, then $F^*\omega$ is a mutiplicative $2$-form on $\cG'\rightrightarrows X'$. If $\omega\in \Omega^2(\cG)$ is $\phi$-relatively closed, for some closed $3$-form $\phi\in\Omega^3(X)$, then $F^*\omega\in\Omega^2(\cG')$ is $f^*\phi$-relatively closed.
\qed
\end{lm}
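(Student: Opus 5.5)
The plan is to verify both assertions directly from the definitions, using only that $F$ commutes with the structure maps and that pullback commutes with the natural maps.

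For multiplicativity, recall that $\cG'^{(2)}\subset\cG'\times\cG'$ and write $\fm',\pr_1',\pr_2'$ for the multiplication and the two projections on $\cG'^{(2)}$. Since $s\circ F=f\circ s'$ and $t\circ F=f\circ t'$, the map $F\times F$ sends composable pairs to composable pairs. Indeed, $s'(g')=t'(h')$ implies $s(F(g'))=f(s'(g'))=f(t'(h'))=t(F(h'))$. So $F^{(2)}:=(F\times F)|_{\cG'^{(2)}}:\cG'^{(2)}\to\cG^{(2)}$ is well defined and smooth. The morphism property then gives
\[
\fm\circ F^{(2)}=F\circ\fm', \qquad \pr_i\circ F^{(2)}=F\circ\pr_i' \quad (i=1,2).
\]
Pulling back the identity $\fm^*\omega=\pr_1^*\omega+\pr_2^*\omega$ by $F^{(2)}$, I obtain
\[
\fm'^*F^*\omega=(F^{(2)})^*\fm^*\omega=(F^{(2)})^*(\pr_1^*\omega+\pr_2^*\omega)=\pr_1'^*F^*\omega+\pr_2'^*F^*\omega ,
\]
which is exactly multiplicativity of $F^*\omega$.

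For relative closedness, first note that $f^*\phi$ is closed, because $d$ commutes with pullback. Then
\[
d(F^*\omega)=F^*d\omega=F^*(s^*\phi-t^*\phi)=(s\circ F)^*\phi-(t\circ F)^*\phi=(f\circ s')^*\phi-(f\circ t')^*\phi=s'^*f^*\phi-t'^*f^*\phi .
\]

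The only point requiring any care is that $F^{(2)}$ lands in $\cG^{(2)}$ and is smooth as a map of manifolds. Smoothness is automatic, since it is the restriction of the smooth map $F\times F$ to the embedded submanifold $\cG'^{(2)}$, and the target lies in the submanifold $\cG^{(2)}$. Everything else is formal functoriality of pullback.
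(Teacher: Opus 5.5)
Your proof is correct and is precisely the direct verification the paper has in mind; the paper omits the proof as an easy check. The argument is that $F\times F$ maps composable pairs to composable pairs and intertwines multiplication and the projections, and pullback is functorial and commutes with $d$.
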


Let us recall some general properties of multiplicative $2$-forms on Lie groupoids that will be useful for us. One can find them in many references, for example, see \cite{MeXX}.

\begin{lm}\label{lm:zero}
Let $\omega\in\Omega^2(\cG)$ be a multiplicative $2$-form on $\cG$. Then the pull-back $\epsilon^*\omega$ to the base $X$ is the zero $2$-form.
\qed
\end{lm}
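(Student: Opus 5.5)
The statement is Lemma \ref{lm:zero}: for a multiplicative $2$-form $\omega$ on a Lie groupoid $\cG\rightrightarrows X$, the pull-back $\epsilon^*\omega$ vanishes. The plan is to restrict the multiplicativity identity $\fm^*\omega=\pr_1^*\omega+\pr_2^*\omega$ to the submanifold of composable pairs made of units.

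First, I consider the map $\Delta: X\to\cG^{(2)}$, $x\mapsto(\epsilon(x),\epsilon(x))$. It lands in $\cG^{(2)}$ because $s(\epsilon(x))=x=t(\epsilon(x))$. It is smooth: it is the restriction of the smooth map $(\epsilon,\epsilon):X\to\cG\times\cG$, and $\cG^{(2)}$ is an embedded submanifold of $\cG\times\cG$, being the fibre product of the submersions $s$ and $t$. The unit axiom gives $\epsilon(x)\epsilon(x)=\epsilon(x)$, that is,
\[
\fm\circ\Delta=\epsilon,\qquad \pr_1\circ\Delta=\epsilon,\qquad \pr_2\circ\Delta=\epsilon .
\]

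Next, I pull back the multiplicativity identity (as an identity of forms on $\cG^{(2)}$) along $\Delta$, using functoriality of pull-back:
\[
\epsilon^*\omega=\Delta^*\fm^*\omega=\Delta^*\pr_1^*\omega+\Delta^*\pr_2^*\omega=2\,\epsilon^*\omega .
\]
This gives $\epsilon^*\omega=0$.

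There is no real obstacle here. The only points to check are that multiplicativity is an identity of forms on the manifold $\cG^{(2)}$, so pulling back along a smooth map into $\cG^{(2)}$ is legitimate, and that $\Delta$ really is smooth into $\cG^{(2)}$, which the argument above covers. The same argument works verbatim in the holomorphic setting, and would also work for forms of any degree.
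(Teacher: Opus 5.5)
Your argument is correct: pulling back the identity $\fm^*\omega=\pr_1^*\omega+\pr_2^*\omega$ along the smooth map $x\mapsto(\epsilon(x),\epsilon(x))$ into $\cG^{(2)}$ gives $\epsilon^*\omega=2\,\epsilon^*\omega$, hence $\epsilon^*\omega=0$, which is the standard proof. The paper gives no proof of this lemma (it quotes it as a known property of multiplicative forms from the literature), so there is nothing further to compare.
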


\begin{pro}\label{pro:nondegenerate}
Let $\cG\rightrightarrows X$ be a Lie groupoid, and let $\omega\in \Omega^2(\cG)$ be a multiplicative $2$-form. If $\ker \omega_p=\{0\}$,  for all $p\in \epsilon(X)$, then $\ker \omega_g=\{0\}$, for all $g\in\cG$. (i.e.\ if $\omega$ is nondegenerate along the units $\epsilon(X)$, then it is nondegenerate on the whole groupoid)
\qed
\end{pro}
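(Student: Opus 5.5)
Writing $\omega$ for the multiplicative 2-form, the plan is to reduce the nondegeneracy at an arbitrary arrow $g\in\cG$ to nondegeneracy at the unit $1_{t(g)}$ (or $1_{s(g)}$) by using multiplicativity along suitable composable pairs. Two consequences of multiplicativity do most of the work. The first is Lemma \ref{lm:zero}: $\epsilon^*\omega=0$. The second is a pair of compatibility identities with the source and target fibres:
\[
\omega_g(u,v)=0 \quad\text{for } u\in\ker(ds)_g,\ v\in\ker(dt)_g .
\]
This follows by writing $g=\m(g,1_{s(g)})$ and $g=\m(1_{t(g)},g)$ and applying $\m^*\omega=\pr_1^*\omega+\pr_2^*\omega$ to tangent vectors of $\cG^{(2)}$ of the form $(u,0)$ and $(0,v)$.

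Next I would show that left translation $L_h$ by $h$ (an isomorphism $t^{-1}(s(h))\to t^{-1}(t(h))$, more precisely defined on the $t$-fibre) pulls back the restriction of $\omega$ to $t$-fibres in a controlled way. The precise statement is: for $v\in\ker(dt)_g$ and any $w\in T_g\cG$, choose $w'\in T_h\cG$ with $(w',w)\in T\cG^{(2)}$. Multiplicativity then gives
\[
\omega_{hg}(d\m(0,v),d\m(w',w))=\omega_h(0,w')+\omega_g(v,w)=\omega_g(v,w).
\]
The analogous statement holds for right translations on $\ker ds$.

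Given these, the argument is as follows. Let $X_g\in\ker\omega_g$ and put $x=s(g)$. Decompose $T_g\cG=\ker(ds)_g\oplus$ (a complement transverse to the $s$-fibre), using that $s$ is a submersion. The cleaner route is to multiply: consider the composable pair $(g^{-1},g)$ with product $1_x$. For any tangent vector $Y$ at $1_x$, write $Y=d\m(Y_1,Y_2)$ with $(Y_1,Y_2)\in T_{(g^{-1},g)}\cG^{(2)}$, where we may take $Y_2=X_g$ adjusted by the inverse-map derivative. Concretely, I would use the map $\Phi:\cG^{(2)}\to\cG$, $\Phi=\m$, together with the section $k\mapsto(k^{-1}\cdot?,k)$, and transport $X_g$ to a vector $\tilde X=d\m(d\imath(X_g)',X_g)$-type combination at a unit. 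Then I would check, via the multiplicativity identity and $\omega_{g^{-1}}$ being related to $-\imath^*\omega$ (since $\imath^*\omega=-\omega$, a standard consequence of multiplicativity applied to $\m(g,g^{-1})=1$ and Lemma \ref{lm:zero}), that $\tilde X\in\ker\omega_{1_{t(g)}}$. Hence $\tilde X=0$ by hypothesis, and since left multiplication by $g$ is a diffeomorphism between fibres, $X_g=0$.

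The main obstacle is the bookkeeping in the last step: showing that every $w\in T_{1_{t(g)}}\cG$ can be realized as $d\m(w',w'')$ with $w''\in T_g\cG$ paired against $X_g$, so that $\omega_{1}(\tilde X,w)$ equals $\omega_g(X_g,w'')$ up to terms that vanish. I would handle it with a dimension count, splitting $T_{1_y}\cG=T\epsilon(X)\oplus\ker(dt)$ and treating the two summands separately. On $\ker(dt)$, right translation by $g$ surjects onto $\ker(dt)_g$, so the identity above applies. On the unit directions, Lemma \ref{lm:zero} together with the compatibility identity makes the extra term vanish.
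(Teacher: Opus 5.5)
\textbf{There is a genuine gap.} You never show that a vector $X_g\in\ker\omega_g$ is tangent to an $s$- or $t$-fibre. Left and right translations can only carry fibre-tangent vectors to a unit, so this step is essential.

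Your one concrete construction does not replace it. The pair $(d\imath(X_g),X_g)$ is tangent to $\cG^{(2)}$ at $(g^{-1},g)$. But $\m\circ(\imath,\id)=\epsilon\circ s$, so the transported vector is $\tilde X=d\m\bigl(d\imath(X_g),X_g\bigr)=d\epsilon\bigl(ds(X_g)\bigr)\in T_{1_{s(g)}}\cG$. The check you sketch (surjectivity of $d\m$ plus $\imath^*\omega=-\omega$) does show $\tilde X\in\ker\omega_{1_{s(g)}}$. However, $\tilde X=0$ only says $ds(X_g)=0$. The map $X_g\mapsto\tilde X$ kills all of $\ker(ds)_g$, so the closing appeal to ``left multiplication by $g$ is a diffeomorphism between fibres'' has nothing to act on. The genuine translate $d\m(0,X_g)=dL_{g^{-1}}(X_g)$ is only defined when $dt(X_g)=0$, which is exactly what has not been established.

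The ``main obstacle'' paragraph does not fill this gap. It concerns realizing test vectors at the unit, not the component of $X_g$ transverse to the fibres. It also confuses the translations: right translation by $g$ maps $\ker(ds)_{1_{t(g)}}$ onto $\ker(ds)_g$, while it is left translation by $g$ that maps $\ker(dt)_{1_{s(g)}}$ onto $\ker(dt)_g$.

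Your derivation of the compatibility identity has a similar problem. It applies multiplicativity to $(u,0)$ and $(0,v)$ at two different composable pairs, which is not allowed. Both vectors must come from one pair, e.g.\ $u=d\m(a,0)$ and $v=d\m(0,v)$ at $(1_{t(g)},g)$ with $a\in\ker(ds)_{1_{t(g)}}$. The identity is true, but you do not need it.

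\textbf{How to close the argument.} The tools you already have suffice, arranged in two steps.

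First, your computation of $\tilde X$ is precisely a proof that every $X_g\in\ker\omega_g$ satisfies $ds(X_g)=0$.

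Second, knowing this, write $X_g=d\m(a,0)$ at the composable pair $(1_{t(g)},g)$ with $a\in\ker(ds)_{1_{t(g)}}$; this uses that right translation by $g$ is a diffeomorphism of $s$-fibres. For any $w\in T_{1_{t(g)}}\cG$, pick $w''\in T_g\cG$ with $dt(w'')=ds(w)$, which is possible since $t$ is a submersion. Multiplicativity then gives
\[
0=\omega_g\bigl(X_g,d\m(w,w'')\bigr)=\omega_{1_{t(g)}}(a,w)+\omega_g(0,w'')=\omega_{1_{t(g)}}(a,w),
\]
so $a\in\ker\omega_{1_{t(g)}}=\{0\}$ and hence $X_g=0$.

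This is just the right-translation version of the identity you already wrote down. No dimension count and no compatibility identity are required.

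As an alternative for the first step, pair $X_g$ against $d\m(0,b)$ at $(g,1_{s(g)})$, with $b\in\ker(dt)_{1_{s(g)}}$. Then use $\epsilon^*\omega=0$ together with $T_{1_{s(g)}}\cG=d\epsilon(T_{s(g)}X)\oplus\ker(dt)_{1_{s(g)}}$ to conclude $d\epsilon\bigl(ds(X_g)\bigr)\in\ker\omega_{1_{s(g)}}$, hence $ds(X_g)=0$.
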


\begin{pro}\label{pro:unit_to_all}
Let $\cG\rightrightarrows X$ be a Lie groupoid and $\omega, \omega ' \in \Omega^2(\cG)$ be multiplicative $2$-forms on $\cG$. If $d\omega=d\omega '$ on $\cG$, and $\omega=\omega '$ along $\epsilon(X)$, and if $\cG$ is $s$-connected, then $\omega=\omega '$ on $\cG$.
\qed
\end{pro}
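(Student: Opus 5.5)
The plan is to show that $\omega-\omega'$ vanishes on every $s$-fibre direction first, and then propagate this using multiplicativity. Put $\beta=\omega-\omega'$. Then $\beta$ is multiplicative, closed, and vanishes along $\epsilon(X)$, in the sense that $\beta_{1_x}$ is zero on all of $T_{1_x}\cG$. Since $\beta$ is multiplicative and closed, it is determined by its infinitesimal data. I would make this explicit by defining, for a section $a$ of the Lie algebroid $A$, the right-invariant vector field $a^R$ on $\cG$. This field is tangent to the $t$-fibres, or to the $s$-fibres depending on convention. The main step is to establish the standard identity
\[
\iota_{a^R}\beta = t^*\bigl(\mu(a)\bigr),
\qquad \mu(a):=\epsilon^*(\iota_{a^R}\beta)\in\Omega^1(X).
\]
This identity follows from multiplicativity, by differentiating $\fm^*\beta=\pr_1^*\beta+\pr_2^*\beta$ along the curve $(\exp(\tau a), g)$ in $\cG^{(2)}$. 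Because $\beta$ vanishes at the units, $\mu(a)=0$, so $\iota_{a^R}\beta=0$ everywhere. Hence $\beta_g(v,\cdot)=0$ for all $v$ tangent to the fibre of $t$ through $g$, since right-invariant fields span these tangent spaces.

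Next I would handle the remaining directions. Multiplicativity together with $\epsilon^*\beta=0$ (Lemma \ref{lm:zero}) gives $\imath^*\beta=-\beta$. Inversion swaps $s$ and $t$, so the same argument with left-invariant vector fields shows that $\beta_g$ also annihilates $\ker(ds)_g$. At any $g$, we have $T_g\cG=\ker(ds)_g+\ker(dt)_g$ plus a complement isomorphic to $T_{s(g)}X$. The kernels pair to zero with everything, so $\beta_g$ reduces to a form pulled back from the base along a section. To finish, I would use the decomposition $g=g\cdot 1_{s(g)}$: for $(v,w)$ with $v\in T_g\cG$ and $w\in T_{1_{s(g)}}\cG$ composable, $\beta(d\fm(v,w),\cdot)$ splits as $\beta_g(v,\cdot)+\beta_{1}(w,\cdot)$. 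Any tangent vector at $g$ can be written as $d\fm(v,w)$ with $v\in\ker(ds)_g$ and $w\in T_{1_{s(g)}}\cG$. Then both terms vanish, and $\beta_g=0$.

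The role of $s$-connectedness and of $d\omega=d\omega'$ is in making the first step rigorous. A more robust alternative argument uses the flows of the $a^R$, and I would pursue it if the pointwise argument runs into trouble. Closedness gives $\mathcal L_{a^R}\beta=d\iota_{a^R}\beta$. Multiplicativity shows this is a pullback by $t$ of a form computed at the units, which vanishes. So $\beta$ is invariant under the flows of right-invariant fields. Every point of an $s$-connected groupoid is reached from $\epsilon(X)$ by finitely many such flows, and along them $\beta$ is carried from $0$ at the units to $0$ everywhere. The main obstacle I expect is verifying the identity $\iota_{a^R}\beta=t^*\mu(a)$ cleanly from the graph-isotropy formulation of multiplicativity, keeping the source/target conventions straight. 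The connectivity step itself is standard.
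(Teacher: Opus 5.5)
Your proposal has a genuine gap in its main (pointwise) line. The fallback flow argument in your last paragraph is the correct proof.

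\textbf{What is fine.} Your first step is sound, and it needs neither closedness nor connectedness. Evaluate $\fm^*\beta=\pr_1^*\beta+\pr_2^*\beta$ at $(1_{t(g)},g)$ on the composable vectors $(a,0)$ and $(d\epsilon\,dt\,v,\,v)$. This gives $\beta_g(a^R,v)=\beta_{1_{t(g)}}(a,d\epsilon\,dt\,v)$, so $\iota_{a^R}\beta\equiv0$. The same computation at $(g,1_{s(g)})$ shows that $\beta_g$ annihilates $\ker(dt)_g$.

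\textbf{Where it fails.} The gap is the last step. Suppose $v\in\ker(ds)_g$ and $(v,w)$ is tangent to $\cG^{(2)}$. Then $dt(w)=ds(v)=0$, so $d\fm(v,w)=v+dL_g(w)\in\ker(ds)_g+\ker(dt)_g$. Hence your claim that every tangent vector at $g$ has this form is false. The subspace $\ker(ds)_g+\ker(dt)_g$ has codimension $\dim X-\operatorname{rank}\rho_{t(g)}$ in $T_g\cG$; the complement is the normal space to the orbit, not $T_{s(g)}X$. For the universal centralizer, where $s=t$ and $\rho=0$, it is only half of $T_g\cZ$. So the pointwise finish gives nothing beyond what step one already gave, on exactly the directions that remain.

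\textbf{Why no pointwise repair exists.} Take $\cG=X\times\Z$ with $s=t=\pr_X$ and $\fm((x,k),(x,l))=(x,k+l)$. Let $\alpha$ be a nonzero closed $2$-form on $X$, and define a form equal to $k\alpha$ on $X\times\{k\}$. This form is multiplicative, closed, and zero on $T_{1_x}\cG$ for all $x$, yet it is nonzero. So any argument that never uses $s$-connectedness must fail. Your remark that connectedness and $d\omega=d\omega'$ serve to make ``the first step rigorous'' misplaces them: they are needed precisely for the propagation step.

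\textbf{What to do instead.} Make the fallback argument the proof; the paper gives no argument of its own here and only defers to the notes it cites. Since $\iota_{a^R}\beta=0$ and $d\beta=0$, Cartan's formula gives $\mathcal L_{a^R}\beta=0$. Hence the local flows $\Phi$ of right-invariant fields satisfy $\Phi^*\beta=\beta$; these flows are local diffeomorphisms of $\cG$ preserving each $s$-fibre.

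The $a^R$ span $T_h s^{-1}(x)$ at every $h$. So the set of points of $s^{-1}(x)$ reachable from $1_x$ by finite compositions of such flows is open, and so is its complement. By connectedness it is all of $s^{-1}(x)$.

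Finally, $\Phi$ is a local diffeomorphism of $\cG$, not just of the fibre. So $\beta_{1_x}=0$ on all of $T_{1_x}\cG$ transports to $\beta_{\Phi(1_x)}=0$ on all of $T_{\Phi(1_x)}\cG$, transverse directions included. This is exactly where both extra hypotheses enter, and with it the pointwise finish can be discarded.
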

We recall that a groupoid is said to be $s$-connected if all $s$-fibres are connected. Without this assumption, Proposition \ref{pro:unit_to_all} may not hold, see \cite{MeXX} for a counterexample.

We end this section by recalling the definition of a symplectic (Lie) groupoid.
\begin{df}
Let $\cG \rightrightarrows X$ be a Lie groupoid and $\omega$ a 2-form on $\cG$. It is called a symplectic groupoid if $\omega$ is symplectic and multiplicative. If, in addition, the manifolds are complex, the structure maps are holomorphic, and the symplectic form is of type $(2,0)$, then it is called a holomorphic symplectic groupoid \cite{LSX}.
\end{df}

\subsection{The universal centralizer.}\label{doub}
In this section, $G$ is any complex, semisimple, simply connected Lie group.
Recall that the universal centralizer of a Steinberg cross section $\Sigma\subset G$ is $\cZ=\{(g,a)\in G\times \Sigma \mid~ gag^{-1}=a\}\subset G\times G$. 

\begin{pro}\label{pro:universalgroupoid} Let $G$ be a complex, semisimple, and simply connected Lie group. Then the universal centralizer of a Steinberg cross section $\cZ\rightrightarrows \Sigma$ is a holomorphic Lie groupoid. Furthermore it is a Lie subgroupoid of $G\times G\rightrightarrows G$.
\end{pro}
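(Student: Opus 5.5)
The plan is to apply Proposition \ref{pro:NLiegroupoid} to the action groupoid $G\times G\rightrightarrows G$ with $Y=\Sigma$. First I identify $\cG^Y_Y$. Here $s(g,a)=a$ and $t(g,a)=gag^{-1}$, so
\[
s^{-1}(\Sigma)\cap t^{-1}(\Sigma)=\{(g,a)\st a\in\Sigma,\ gag^{-1}\in\Sigma\}.
\]
Each conjugacy class meets $\Sigma$ in exactly one point (Proposition \ref{stein} and its general-$G$ analogue due to Steinberg). Hence $gag^{-1}\in\Sigma$ forces $gag^{-1}=a$, and $\cG^\Sigma_\Sigma$ is exactly $\cZ$. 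On $\cZ$ we have $s=t$, the multiplication is $(g,a)(h,a)=(gh,a)$, the unit is $(\id_G,a)$, and the inverse is $(g^{-1},a)$. All of these are restrictions of the holomorphic structure maps of $G\times G\rightrightarrows G$.

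The main step is the transversality condition $T_a\Sigma+\rho(A_a)=T_aG$ for all $a\in\Sigma$. For the action groupoid, $A_a\cong\fg$ via $\xi\mapsto\frac{d}{d\tau}(\exp\tau\xi,a)$. The anchor is
\[
\rho(\xi)=\frac{d}{d\tau}\Big|_{\tau=0}\exp(\tau\xi)\,a\,\exp(-\tau\xi),
\]
so $\rho(A_a)$ is the tangent space at $a$ to its conjugacy class $C(a)$. Since every $a\in\Sigma$ is regular, $\dim C(a)=\dim G-\mathrm{rank}\,G$, while $\dim\Sigma=\mathrm{rank}\,G$ (for $\SL_{n+1}\C$ this is $\cM_{n+1}\cong\C^n$). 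The dimensions therefore add up, and it suffices to show $T_a\Sigma\cap T_aC(a)=0$.

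For this I use the characteristic map $\chi:G\to G/\!/G\cong\C^{r}$, given by the fundamental characters (the coefficients of the characteristic polynomial for $\SL_{n+1}\C$). The map $\chi$ is constant on conjugacy classes, so $d\chi_a$ kills $T_aC(a)$. By Steinberg's theorem (Proposition \ref{stein}), $\chi|_\Sigma$ is an isomorphism of varieties, so $d\chi_a$ is injective on $T_a\Sigma$. Hence the intersection is zero and the sum is direct, spanning $T_aG$. I expect this to be the only real obstacle. It needs both the regularity of elements of $\Sigma$ (Steinberg) and the fact that $\chi|_\Sigma$ is an isomorphism with nonsingular differential, not merely a bijection; simple connectivity of $G$ is what makes Steinberg's results available in this form.

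Proposition \ref{pro:NLiegroupoid} then gives that $\cZ\rightrightarrows\Sigma$ is a Lie groupoid and a Lie subgroupoid of $G\times G\rightrightarrows G$. The same transversality argument works verbatim in the holomorphic category: the relevant maps are holomorphic, and $\cZ=(s,t)^{-1}(\Sigma\times\Sigma)$ is a clean holomorphic preimage. So $\cZ$ is a complex submanifold of dimension $\dim G$, and the structure maps are holomorphic. The source and target, which coincide, are holomorphic submersions, because transversality makes $s|_{\cZ}$ submersive onto $\Sigma$. This establishes Proposition \ref{pro:universalgroupoid}.
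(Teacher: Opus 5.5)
Your proposal is correct and follows the paper's proof: you apply Proposition \ref{pro:NLiegroupoid} to $G\times G\rightrightarrows G$ with $Y=\Sigma$, identify $\rho(A_a)$ with $T_a\cC_a$, and use the cross-section property to get $s^{-1}(\Sigma)\cap t^{-1}(\Sigma)=\cZ$. The one difference is that you derive $T_a\Sigma\oplus T_a\cC_a=T_aG$ from the dimension count together with $d\chi_a$ (which kills $T_a\cC_a$ and is injective on $T_a\Sigma$), whereas the paper simply cites \cite{Se11} for this transversality. One slip in your last paragraph: $\cZ$ has complex dimension $2\,\mathrm{rank}\,G=2\dim\Sigma$, not $\dim G$, since the preimage $(s,t)^{-1}(\Sigma\times\Sigma)$ has codimension $2(\dim G-\mathrm{rank}\,G)$ in $G\times G$, or equivalently since the fibres $G_a$ of dimension $\mathrm{rank}\,G$ sit over $\Sigma$. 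This error does not affect the argument.
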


\begin{proof}
To show that it is a Lie subgroupoid, we shall use the criterion of Proposition \ref{pro:NLiegroupoid}. We start with the Lie groupoid $G\times G \rightrightarrows G$. Consider the associated Lie algebroid $(A\rightarrow G,\rho:A\to TG)$. For any $a\in \Sigma$, let $A_a=T_{1_a}s^{-1}(a)$ and $\rho=dt$, then
$\rho(A_a)=\{dt_{1_a}(\beta_a)~\mid~\beta_a \in T_{1_a}s^{-1}(a)\}\subset T_a G$. We claim that $\rho(A_a)=T_a\cC_a$ where $\cC_a$ is the conjugacy class/orbit of $a\in G$. To show this, let $(\gamma(\tau),a) \in s^{-1}(a)$ be a curve with $(\gamma(0),a)=(id_G,a)=1_a$, $(\gamma'(0),0)=\beta_a$, $\tau$ being the parameter of the curve. Then
$dt_{1_x}(\beta_x)=\frac{d}{d\tau}\mid_{\tau=0}\gamma(\tau)x \gamma(\tau)^{-1}$, and $\rho(A_a)=T_a\cC_a$.

Recall that a Steinberg cross section is a submanifold of $G$, and is transversal to all conjugacy orbits of regular elements in G - see \cite{Se11}. Thus we have
\[T_a \Sigma+ \rho(A_a)=T_a\Sigma\oplus T_a\cC_a=T_a G,  \quad \forall ~a\in \Sigma. \]
On the other hand,
$s^{-1}(\Sigma)\cap t^{-1}(\Sigma)=\{(g,a)\in G\times \Sigma\mid~gag^{-1}=a\}$,
because $\Sigma$ is a cross section of conjugacy classes so $gag^{-1}$ must be equal to $a$.
Hence, by Proposition \ref{pro:NLiegroupoid}, $\cZ$ is a Lie groupoid over the Steinberg cross section $\Sigma$,
and their structure maps are just the restrictions, which we shall denote by $s'$, $t'$, $\epsilon'$ and $\fm'$.
It is then clear that, with the natural inclusions $i:\cZ \hookrightarrow G\times G$ and $i_o:\Sigma \hookrightarrow G$, the Lie groupoid $\cZ\rightrightarrows \Sigma$ is a Lie subgroupoid of $G\times G \rightrightarrows G$, and the pair $(i,i_o)$ is the morphism of Lie groupoids. Moreover, For $G$ complex semisimple simply connected, a Steinberg cross section is a complex manifold, and the structure maps are clearly all holomorphic, so $\cZ\rightrightarrows \Sigma$ is a holomorphic groupoid.
\end{proof}

The proof shows that the source map $s'$ and the target map $t'$ of $\cZ\rightrightarrows \Sigma$ are identical. This will become crucial later in proving that $\cZ$ is a holomorphic (algebraic) completely integrable system. 

With respect to the $2$-form of the quasi-symplectic groupoid $G\times G\rightrightarrows G$, we have:

\begin{thm}\label{thm:universalsymplectic}
Let $G$ be a complex, semisimple, and simply connected Lie group. Then the universal centralizer of a Steinberg cross section $\cZ\rightrightarrows \Sigma$ is a holomorphic symplectic groupoid with the holomorphic symplectic form $i^*\omega$, where $i:\cZ\hookrightarrow G\times G$ is the natural inclusion and $\omega$ is defined by equation (\ref{eq:quasi2form}).
\end{thm}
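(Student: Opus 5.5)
Our plan is to combine Proposition \ref{pro:universalgroupoid} with the general facts about multiplicative forms recalled above. By Proposition \ref{pro:universalgroupoid}, the pair $(i,i_o)$ is a morphism of Lie groupoids from $\cZ\rightrightarrows\Sigma$ to the AMM groupoid $G\times G\rightrightarrows G$. Lemma \ref{lm:multiplicative} then gives that $i^*\omega$ is multiplicative and $i_o^*\chi$-relatively closed:
\[
d(i^*\omega)=s'^*i_o^*\chi-t'^*i_o^*\chi .
\]
Since $s'=t'$ on $\cZ$, as noted after Proposition \ref{pro:universalgroupoid}, the right-hand side vanishes, so $i^*\omega$ is closed. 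Because $\omega$ is built from holomorphic Maurer--Cartan forms and a complex bilinear form, and $i$ is a holomorphic embedding, $i^*\omega$ is a holomorphic $(2,0)$-form. It remains to prove nondegeneracy.

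For nondegeneracy we use Proposition \ref{pro:nondegenerate}, which reduces the question to the unit points $1_a=(e,a)$ with $a\in\Sigma$. First the dimensions. Each $a\in\Sigma$ is regular, so its centralizer $Z_G(a)$ has dimension $r=\mathrm{rank}\, G=\dim\Sigma$. The source map $s'\colon\cZ\to\Sigma$ is a surjective submersion whose fibres are these centralizers, so $\dim\cZ=2r$. At $1_a$ the tangent space splits as
\[
T_{1_a}\cZ=\{(\xi,0)\st \xi\in\mathfrak z(a)\}\oplus\{(0,v)\st v\in T_a\Sigma\},
\]
where $\mathfrak z(a)=\{\xi\st \Ad_a\xi=\xi\}$ and the second summand is $d\epsilon'(T_a\Sigma)$. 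Lemma \ref{lm:zero} shows that $i^*\omega$ vanishes on $T_a\Sigma\times T_a\Sigma$. On $\mathfrak z(a)\times\mathfrak z(a)$ formula (\ref{eq:quasi2form}) reduces to $\tfrac12(\Ad_a\xi,\eta)=\tfrac12(\xi,\eta)$, which is symmetric; since $\omega$ is alternating, it must vanish there. So both summands are isotropic, and the only contribution is the mixed term
\[
\omega\bigl((\xi,0),(0,v)\bigr)=\tfrac12\bigl(\xi,\ (\theta^L+\theta^R)(v)\bigr).
\]
Writing $v=a\cdot\eta$ with $\eta=\theta^L(v)$, we have $(\theta^L+\theta^R)(v)=\eta+\Ad_a\eta$, and by $\Ad$-invariance and $\Ad_a\xi=\xi$ this gives $(\xi,\eta+\Ad_a\eta)=2(\xi,\eta)$. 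The pairing is therefore $(\xi,\theta^L(v))$ for $\xi\in\mathfrak z(a)$ and $v\in T_a\Sigma$.

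The main step is to show that this pairing $\mathfrak z(a)\times T_a\Sigma\to\C$ is nondegenerate. Since both spaces have dimension $r$, it suffices to show that if $\xi\in\mathfrak z(a)$ pairs to zero with all of $\theta^L(T_a\Sigma)$, then $\xi=0$. By invariance, $\mathfrak z(a)^\perp=\mathrm{im}(\Ad_a-1)$, and $\theta^L(T_a\cC_a)=\mathrm{im}(\Ad_{a^{-1}}-1)=\mathrm{im}(\Ad_a-1)=\mathfrak z(a)^\perp$. Hence such a $\xi$ is orthogonal to $\theta^L(T_a\Sigma)+\theta^L(T_a\cC_a)=\fg$, using the transversality $T_a\Sigma\oplus T_a\cC_a=T_aG$ from the proof of Proposition \ref{pro:universalgroupoid}. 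Nondegeneracy of $(\cdot,\cdot)$ then forces $\xi=0$. Consequently $i^*\omega$ is nondegenerate at every unit, hence everywhere by Proposition \ref{pro:nondegenerate}, and together with multiplicativity this shows that $\cZ\rightrightarrows\Sigma$ is a holomorphic symplectic groupoid.
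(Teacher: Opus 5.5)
Your proposal is correct and follows essentially the same route as the paper: closedness from Lemma \ref{lm:multiplicative} together with $s'=t'$, reduction to the units via Proposition \ref{pro:nondegenerate}, vanishing of the two diagonal blocks, the mixed term computed as $(\xi,\theta^L(v))$, and nondegeneracy deduced from the transversality $T_a\Sigma\oplus T_a\cC_a=T_aG$. The only difference is presentational. Your identification $\mathfrak z(a)^\perp=\mathrm{im}(\Ad_a-1)=\theta^L(T_a\cC_a)$, combined with the dimension count, spells out the step that the paper compresses into $T_a\Sigma\cap(T_aG_a)^\perp=\{0\}$.
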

\begin{proof}
By Lemma \ref{lm:multiplicative}, $i^*\omega$ is multiplicative and $i_o^*\chi$-relatively closed, so
$d(i^*\omega)=0$ because $s'=t'$.
For nondegeneracy, by Proposition \ref{pro:nondegenerate}, we only need to show that $i^*\omega$ is nondegenerate along the unit $\epsilon'(\Sigma)$ inside the universal centralizer $\cZ$.

We begin by observing that there is a natural splitting $T_{1_a}\cZ=T_{1_a}\epsilon'(\Sigma)\oplus T_{1_a}((s')^{-1}(a))$ $\forall a\in\Sigma$. Thus for any vector $u\in T_{1_a}\cZ$, we can write $u=u^H+u^F$ where $u^H\in T_{1_a}\epsilon'(\Sigma)$, and $u^F\in T_{1_a} (s')^{-1}(a)$. Hence the calculation of the 2-form is decomposed into
\[(i^*\omega)_{1_a}(u,v)=(i^*\omega)_{1_a}(u^H,v^H)+(i^*\omega)_{1_a}(u^F,v^F)+(i^*\omega)_{1_a}(u^H,v^F)+(i^*\omega)_{1_a}(u^F,v^H),\]
and we calculate them respectively as follows.

(1) $(i^*\omega)_{1_a}(u^H,v^H)=0$, because, by Lemma \ref{lm:zero}, the pull-back to the base of a multiplicative 2-form vanishes.

(2) Consider $u^F,v^F\in T_{1_a}((s')^{-1}(a))\subset T_{1_a}\cZ$. Since $(s')^{-1}(a)=G_a\times \{a\}$, so $i_*(T_{1_a} (s^{-1}(a)))=(T_{id_G}G_a) \times \{0\}$. Then $i_*u^F=(\xi^L(id_G),0)$, $i_*v^F=(\eta^L(id_G),0)$, for some $\xi,~ \eta \in \fg_a\subset \fg$. Hence \begin{eqnarray*}
(i^*\omega)_{1_a}(u^F,v^F)&=&\tfrac{1}{2}\Big(  ( \Ad_a(\theta^L(\xi^L(id_G))), \theta^L(\eta^L(id_G)))
- (\Ad_a(\theta^L(\eta^L(id_G))),\theta^L(\xi^L(id_G))) \\ &&+ (\theta^L(\xi^L(id_G)),0)-(\theta^L(\eta^L(id_G)),0)\Big)=0,
\end{eqnarray*}
where we have used $\Ad_a\xi=\xi=\Ad_{a^{-1}}\xi$ for the last equality because $\xi\in\fg_a$.

(3) Consider $u^H\in T_{1_a}\epsilon'(\Sigma)$ and $v^F\in T_{1_a}((s')^{-1}(a))$.
Then we can write $i_*u^H=(0,\rho^L(a))$, for some $\rho^L(a)\in T_a\Sigma$, $ \rho\in  \fg$, while  $i_*v^F=(\eta^L(id_G),0)$, for some $\eta\in  \fg_a$ as in (2). Thus\[(i^*\omega)_{1_a}(u^H,v^F)= \tfrac{1}{2}\Big( (0,\eta)-(\Ad_a \eta, 0) + (0,0)-(\eta, (\theta^L+\theta^R)(\rho^L(a)))\Big) =-(\eta,\rho),\]
where we used $\theta^R(a)=\Ad_a\theta^L(a)$ as well as $\Ad_a\eta=\eta=\Ad_{a^{-1}}\eta$.

(4) Consider $u^F\in T_{1_a}((s')^{-1}(a))$ and $v^H\in T_{1_a}\epsilon'(\Sigma)$. Similar to (3), $i_*v^H=(0,\varrho^L(a))$, for some $\varrho^L(a)\in T_a\Sigma$, $ \varrho\in  \fg$, and $i_*u^F=(\xi^L(id_G),0)$, for some $\xi\in  \fg_a $ as in (2). Then
\[
(i^*\omega)_{1_a}(u^F,v^H)= \tfrac{1}{2} (\xi+\Ad_{a^{-1}}\xi, \varrho)=(\xi,\varrho).
\]

So for any $u,~v \in T_{1_a}\cZ$, writing them as $i_*u=(0,\rho^L(a))+(\xi^L(id_G),0)$ and $i_*v=(0,\varrho^L(a)) + (\eta^L(id_G),0)$ as above, to show that $(i^*\omega)_{1_a}$ is nondegenerate, we just need to show that for any $u$, we can find $v$ such that $(\xi,\varrho)-(\eta,\rho)\neq 0$. This is indeed the case, because we have $T_a \Sigma \oplus T_a \cC_a= T_aG$ (see the proof of Proposition \ref{pro:universalgroupoid}), and also $T_a \cC_a=T_a G/T_a G_a$. Thus we have $T_a \Sigma\cap (T_a G_a)^{\perp}=\{0\}$, and we are done.

Finally, we want to show that the 2-form is of type $(2,0)$. Recall, a $2$-form $\Omega$ is of type $(2,0)$ if and only if for any real tangent vectors $u$ and $v$, it satisfies $\Omega(Ju, v) = \sqrt{-1} \Omega(u, v)$. Now the complex structure on $\cZ$ is defined by the complex Lie group $G$. From the definition of the $2$-form $i^*\omega$, direct calculation shows that it indeed satisfies this criterion. This completes the proof that $(\cZ\rightarrow \Sigma,i^*\omega)$ is a holomorphic symplectic groupoid.
\end{proof}

For this reason, we denote this holomorphic symplectic $2$-form $i^*\omega$ on $\cZ$ by $\omega_{\bC}$,  and we 
call this holomorphic symplectic groupoid $(\cZ\rightrightarrows \Sigma,\omega_{\bC}) $ the {\em Steinberg groupoid}.

As noted in the introduction, the fact that $\omega_{\bC}$ defines a
symplectic form on the universal centralizer $\cZ$ had
already been established
(\cite{BFM}, \cite{FiTs19}, \cite{Ba22}). The groupoid viewpoint
in Theorem \ref{thm:universalsymplectic} gives a simpler
alternative proof, and it is the one which we shall need for
our main results.

\begin{rem}\label{remarkX.K}
Using similar arguments, one can show that the universal centralizer with respect to the Kostant cross section is also a symplectic groupoid, which may be called the {\em Kostant groupoid}. We postpone a discussion of the relation between these two groupoids.
\qed
\end{rem}

\begin{rem}\label{remarkX.B}
Another example is the symplectic groupoid introduced by Bondal in \cite{Bo04}.
Let $\cT_{n+1}$ be the subspace of $\GL_{n+1}\bC$
consisting of upper triangular matrices with all diagonal entries equal to $1$ (\ll Stokes matrices\rrr).  Let
$\cG_{n+1}= \{ (B,A) \in \GL_{n+1}\bC \times \cT_{n+1} \st B^{-T} A B^{-1} \in \cT_{n+1} \}$.
A groupoid $\cG_{n+1}\rra\cT_{n+1}$ is obtained by taking $s(B,A)=A$,
$t(B,A)=B^{-T} A B^{-1}$, $\epsilon(A)=(I,A)$, and
$\m( (B_1,A_1),(B_2,A_2) )= (B_1B_2,A_2)$ when $A_1=B_2^{-T} A_2 B_2^{-1}$.

Evidently the unipotent subgroup $\cT_{n+1}$ here may be replaced by any other maximal unipotent subgroup.  Let us use the one
given by the positive roots
$\cRz_1\cup \dots\cup \cRz_{1+\scriptstyle\frac n{n+1}}$
(see the remarks following Proposition \ref{scriptM}).
Then it can be verified that map
\[
\cS_{n+1}^{\text{local}}\to \cG_{n+1},\quad
(\tE,\tMz)\mapsto (\tE, (\tSz_1)^{-T})
\]
is an injective morphism of groupoids. In particular, the matrix $\tMz$ in our theory
plays the same role as the Stokes matrix in Bondal's theory.
\qed
\end{rem}

It is known (see \cite{We87}) that the base of any symplectic groupoid
inherits a Poisson structure and the source map is Poisson while the target map is anti-Poisson.
Since $s=t$ for the universal centralizer, the Poisson structure on the base is trivial. 
Moreover, all the $s$-fibres are Lagrangian submanifolds of the same dimension.

A completely integrable Hamiltonian system is a symplectic manifold which has the maximal number of Poisson commuting independent integrals of motion (i.e.\ one half the dimension of the manifold). To say that the functions are independent  means that their differentials are linearly independent on an open dense subset. The universal centralizer of a Steinberg cross section turns out to be a (holomorphic, in fact algebraic) example of this, and we can view it (in our context) as a group analogue of Hitchin’s completely integrable system.

\begin{thm}
The Steinberg groupoid $\cZ\rightrightarrows \Sigma$ is a holomorphic (in fact, algebraic) completely integrable Hamiltonian system. 
In particular, $\cZ$ is a (singular) Lagrangian fibration over $\Sigma$. The generic fibres are complex algebraic tori and the singular fibres are those over non-semisimple elements of $\Sigma$.
\end{thm}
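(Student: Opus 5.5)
The plan is to take as integrals of motion the pull-backs along $s'=t'$ of coordinates on the base. By Proposition \ref{stein} (in its general form for simply connected $G$, due to Steinberg), $\Sigma\cong\C^r$, where $r=\mathrm{rank}\,G$. Explicit coordinates are given by the fundamental characters $\chi_1,\dots,\chi_r$ restricted to $\Sigma$. Put $H_i=\chi_i\circ s'$ on $\cZ$. Then $\dim\cZ=2\dim\Sigma=2r$, because the centralizer of a regular element has dimension $r$ and $\cZ\to\Sigma$ has these centralizers as fibres.

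The key steps run as follows.

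\textbf{Step 1: the $H_i$ Poisson commute.} By Weinstein's result (\cite{We87}), quoted just before the statement, $s'$ is a Poisson map to the base with its induced Poisson structure. Since $s'=t'$ and $t'$ is anti-Poisson, that structure is trivial. Hence $\{H_i,H_j\}=\{\chi_i,\chi_j\}_\Sigma\circ s'=0$.

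\textbf{Step 2: the $H_i$ are independent.} The map $s'$ is a submersion, because $\cZ$ is a Lie groupoid by Proposition \ref{pro:universalgroupoid}. The $\chi_i$ are global coordinates on $\Sigma$, so $dH_1,\dots,dH_r$ are linearly independent everywhere on $\cZ$, not just on a dense open set. The fibres $s'^{-1}(a)=G_a\times\{a\}$ are Lagrangian. Indeed, they are isotropic by computation (2) in the proof of Theorem \ref{thm:universalsymplectic}, carried over to an arbitrary point $(g,a)$ by multiplicativity, and they have half dimension. All data (the $\chi_i$, $\Sigma$, $\cZ$, and $\omega_\C$) are algebraic, so the system is algebraic.

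\textbf{Step 3: identify the fibres.} Each fibre is the centralizer $G_a$ of a regular element $a$, which is an abelian algebraic group of dimension $r$ (Steinberg, \cite{St65}). If $a$ is regular semisimple, then $G_a$ is a maximal torus $T\cong(\C^\ast)^r$. Here simple connectedness is used: it guarantees that centralizers of semisimple elements are connected, so the generic fibre is exactly a complex algebraic torus. If $a$ is regular but not semisimple, write $a=a_sa_u$ with $a_u\ne1$. Then $G_a=Z(G_{a_s})\cdot Z_{G_{a_s}}(a_u)$ has a nontrivial unipotent part, so it is not a torus. These are the singular fibres. The regular semisimple elements of $\Sigma$ form a Zariski open dense set, namely the complement of the discriminant divisor, and this justifies the word ``generic''.

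The main obstacle is making the ``singular fibre'' statement precise. Since $s'$ is a submersion everywhere, the fibres over non-semisimple points are smooth. So ``singular'' has to mean that the abelian group structure degenerates: the fibre contains a $\mathbb{G}_a$ factor and is no longer a torus. It does not mean a critical point of the moment map. I would phrase the result in this sense and verify the decomposition $G_a\cong Z(G_{a_s})^\circ$-torus part $\times$ unipotent part via the Jordan decomposition, following Steinberg's description of regular centralizers. A secondary check concerns the claim in Step 2 that isotropy propagates from $1_a$ to all of $s'^{-1}(a)$. For this I would use the fact that left multiplication by $(g,a)$ maps $s'^{-1}(a)$ to itself, together with $\fm^*\omega=\pr_1^*\omega+\pr_2^*\omega$ and Lemma \ref{lm:zero}.
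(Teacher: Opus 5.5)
Your proposal is correct and is essentially the paper's argument. The integrals are the fundamental characters pulled back along $s=t$. They Poisson commute because the induced Poisson structure on $\Sigma$ is trivial, and they are independent because $\chi$ identifies $\Sigma$ with $\C^r$ and $s$ is a submersion. The fibres are the centralizers $G_a$, which are tori for semisimple $a$ by simple connectedness.

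Your extras correctly fill in points the paper leaves implicit: the isotropy of every $s$-fibre via multiplicativity, and the remark that, since $s$ is a submersion, ``singular'' can only mean that $G_a$ contains the nontrivial unipotent element $a_u$ and so is not a torus. One small fix: your formula $G_a=Z(G_{a_s})\cdot Z_{G_{a_s}}(a_u)$ is tautological as written, though the conclusion you draw from it is right.
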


\begin{proof}
We have already seen that the groupoid $\cZ\rightrightarrows\Sigma$ is a Lagrangian (singular) foliation because the s-fibers are Lagrangian submanifolds in $\cZ$. 
Since the Poisson structure is trivial on $\Sigma$, any functions on $\Sigma$ Poisson commute, and since the source map $s$ is Poisson, any functions pulled back to $\cZ$ by $s$ also Poisson commute.
Thus, in order to show that this is an integrable system, we only need to find $\frac{1}{2}\dim \cZ$ independent functions on $\cZ$.

Let $\chi_i:~G\to \bC~ (1\leq i \leq \ell)$ denote the fundamental characters of $G$. 
Steinberg showed that if $G$ is a semisimple simply connected (algebraic) group, an element $a\in G$ is regular if and only if the differentials of $\chi_1,\cdots,\chi_\ell$ are linearly independent at $a$ (Theorem 1.5 of \cite{St65}).
Thus, the $\chi_i$ are independent on $\Sigma$. In fact, Steinberg used the map $\chi=(\chi_1,\cdots,\chi_\ell)$ to define the isomorphism from $\Sigma$ to $\bC^\ell$. 
Since $s$ is a submersion (it is the projection map to the second component), if functions on $\Sigma$ are independent, their pull-backs under $s$ are independent on $\cZ$. 
Thus we obtain a set of $\ell$ independent functions on $\cZ$ which pairwise Poisson commute, and $\dim\cZ=2\ell$.

The $s$-fibre of $a\in \Sigma$ is the stabilizer $G_a$, which will be an $\ell$-dimensional complex algebraic torus if $a$ is semisimple. 
\end{proof}

\begin{remark}
We have verified explicitly in the cases $G=\SL_2\bC$ and $\SL_3\bC$, that, when $g$ and $a$ are both semisimple, then (the logarithms of) their eigenvalues give action-angle coordinates.
\end{remark}

\subsection{Anti-symmetry and Reality.}\label{antisymm}

Recall from Section \ref{conn} that both $\sigma:\cZ\to\cZ$ and $\theta:\cZ\to\cZ$ are involutions, and that $\sigma$ commutes with $\theta$. In fact, they are also compatible with the groupoid structure. To emphasise the dependence of the involutions on the Stokes data $M$, we shall use $(E,M)$ to denote the points in $\cZ$ throughout this section.
Since we shall not be using the AMM groupoid $G\times G\rightrightarrows G$ anymore, we simply use the notation $s,\ t,\ \epsilon, \fm$ to represent the corresponding maps for the universal centralizer groupoid $\cZ\rightrightarrows \Sigma$ and drop the $'$ notations that were used up to this point.

\begin{pro}\label{pro:commute}
The pairs of maps $(\sigma,\sigma_o)$ and $(\theta,\theta_o)$ are both (involutory) Lie groupoid morphisms from $\cZ\rightrightarrows \Sigma$ onto itself. Furthermore, these two Lie groupoid morphisms commute with each other.
\end{pro}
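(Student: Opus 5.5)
The pairs here are $(\bsi,\bsi_0)$ and $(\bth,\bth_0)$ from Definition \ref{morphism}, written $\sigma,\theta$ in this section. The plan is to check directly that each pair satisfies the axioms of a Lie groupoid morphism of $\cZ\rightrightarrows\Sigma$ to itself. For the groupoid $\cZ$ we have $s(E,M)=t(E,M)=M$, the unit is $\epsilon(M)=(I,M)$, and multiplication is $\fm((E_1,M),(E_2,M))=(E_1E_2,M)$, defined exactly when the second components agree. We take $F=F_\si$ or $F_\th$ as in Definition \ref{AdF}.

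The compatibility with $s$ and $t$ is immediate. The second component of $\sigma(E,M)$ is $\sigma_o(M)$ by definition, so $s\circ\sigma=\sigma_o\circ s$, and likewise for $t$ since $s=t$. The same holds for $\theta$.

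For multiplicativity, take a composable pair $(E_1,M),(E_2,M)$. Both elements use the same conjugating matrix $F(M)$, because $F$ depends only on the base point $M$. For $\sigma$ this gives
\[
\Ad_{F_\si(M)}(E_1E_2)^{-T}=\Ad_{F_\si(M)}E_1^{-T}\,\Ad_{F_\si(M)}E_2^{-T},
\]
since $(E_1E_2)^{-T}=E_1^{-T}E_2^{-T}$ and conjugation is a homomorphism. For $\theta$ we use $\overline{E_1E_2}=\bar E_1\bar E_2$ in the same way. The images are composable, because both have base point $\sigma_o(M)$ (respectively $\theta_o(M)$). Units go to units, since $\Ad_F I^{-T}=I$.

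Smoothness needs a little more care. $F_\si$ and $F_\th$ are polynomial in $s$, by Proposition \ref{Qfroms} and the formula $\tSz_1=\tQz_1\cdots\tQz_{1+\frac n{n+1}}$. The coordinate $s$ is a polynomial function of $M$, given by the characteristic polynomial coefficients of Proposition \ref{stein}. So $\sigma,\theta$ are real-algebraic maps of $\cZ\subset G\times G$; $\sigma$ is holomorphic and $\theta$ antiholomorphic. Both maps are well defined on $\cZ$, by the remark after Definition \ref{morphism} together with Theorem \ref{preservesM}. This makes $(\sigma,\sigma_o)$ and $(\theta,\theta_o)$ smooth Lie groupoid morphisms of the underlying real Lie groupoid.

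Theorem \ref{comminv} gives $\sigma^2=\theta^2=\id$ and $\sigma\theta=\theta\sigma$ on $\cZ$. Restricting to second components gives the same relations for $\sigma_o,\theta_o$. The involutive property makes each morphism a diffeomorphism and hence onto. The commutation statement is then just Theorem \ref{comminv} applied to both components.

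The only point needing any care is that $F$ is evaluated at the base point, which composable pairs share. Beyond that, the main thing to verify is that the Theorem \ref{comminv} relations were proved on all of $\cZ$, and not only on its fixed locus. In the even case they rely on $A^{n+1}$ commuting with $B$, which holds throughout $\cZ$. Nothing further beyond these earlier results should be required.
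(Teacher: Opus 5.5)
Your proposal is correct and follows essentially the same route as the paper. The checks are the same: compatibility with $s$ (hence with $t$, since $s=t$ on $\cZ$), multiplicativity because composable pairs in $\cZ$ share the base point $M$ so a single matrix $F(M)$ conjugates both factors, and involutivity and commutation taken from Theorem~\ref{comminv}. Your extra remarks on smoothness (polynomial dependence of $F_\sigma, F_\theta$ on $s$, and of $s$ on $M$) and on units only make explicit details the paper leaves implicit.
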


\begin{proof}
From the definition of $\sigma$ and $\sigma_o$ , clearly $\sigma_o\circ s=s\circ\sigma$, so we only need to show
$$\sigma( \fm((E_1,M_1),(E_2,M_2)))=\fm(\sigma(E_1,M_1),\sigma(E_2,M_2)),\quad \forall((E_1,M_1),(E_2,M_2))\in \cZ^{(2)}.$$
Given any $((E_1,M_1),(E_2,M_2))\in \cZ\times \cZ$, this is in $\cZ^{(2)}$ if and only if $M_1=E_2M_2E_2^{-1}$. 
As $M_1$ and $M_2$ are in $\Sigma$, a Steinberg cross section, we have $M_1=M_2$ and
$\fm((E_1,M_1),(E_2,M_2))=(E_1E_2,M_2)$. Thus
\begin{eqnarray*}
\fm(\sigma(E_1,M),\sigma(E_2,M_2))&=& (\Ad_{F_\sigma(M)} E_1^{-T} E_2^{-T}, \Ad_{F_\sigma(M)} M^{-T} )  \\
&=& \sigma(E_1E_2,M)=\sigma(\fm((E_1,M),(E_2,M)) ).
\end{eqnarray*}
Thus $(\sigma,\sigma_o)$ is a Lie groupoid morphism from $\cZ\rightrightarrows \Sigma$ onto itself.
The same argument shows that $(\theta,\theta_o)$ is a Lie groupoid morphism. By Theorem \ref{comminv}, they are commuting Lie groupoid morphisms.
\end{proof}

This gives us:
\begin{cor}\label{cor:fixedpoint_groupoid}
The fixed points sets $ \cZ^{\sigma} \rightrightarrows \Sigma^{\sigma_o} $, $ \cZ^{\theta} \rightrightarrows \Sigma^{\theta_o} $  are Lie groupoids. Moreover, they are both Lie subgroupoids of the universal centralizer groupoid $\cZ\rightrightarrows \Sigma$ with the natural inclusions $j_1:\cZ^{\sigma}\hookrightarrow \cZ$, $(j_1)_0:\Sigma^{\sigma_o}\hookrightarrow \Sigma$, and $j_2:\cZ^{\theta}\hookrightarrow \cZ$, $(j_2)_0:\Sigma^{\theta_o}\hookrightarrow \Sigma$.
\end{cor}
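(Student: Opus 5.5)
The plan is to use the general fact that the fixed point set of an involutive Lie groupoid automorphism is a Lie subgroupoid, and to verify it concretely in our setting. By Proposition \ref{pro:commute}, $(\sigma,\sigma_o)$ is an involutive Lie groupoid morphism of $\cZ\rightrightarrows\Sigma$. The first step is to check that $\cZ^\sigma\rightrightarrows\Sigma^{\sigma_o}$ is a subgroupoid algebraically. Since $s=t$ is the projection $(E,M)\mapsto M$ and $s\circ\sigma=\sigma_o\circ s$, every $(E,M)\in\cZ^\sigma$ has $M\in\Sigma^{\sigma_o}$. Conversely, for $M\in\Sigma^{\sigma_o}$ we have $\sigma(I,M)=(\Ad_{F_\sigma(M)}I,\sigma_o(M))=(I,M)$, so $\epsilon(\Sigma^{\sigma_o})\subset\cZ^\sigma$. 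The morphism property gives closure of $\cZ^\sigma$ under $\fm$. It also gives closure under inversion, since $\sigma$ commutes with the inverse map $(E,M)\mapsto(E^{-1},M)$ by the explicit formula. The same argument applies verbatim to $\theta$.

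Next I treat the manifold structure. I regard $\sigma$ and $\theta$ as smooth involutions of the underlying real manifolds. This is automatic for $\sigma$, which is holomorphic. For $\theta$ it holds because $\theta$ is antiholomorphic: $F_\theta$ is polynomial in $\bar s$ by Proposition \ref{Qfroms}. The fixed point set of a smooth involution of a manifold is a closed embedded submanifold; one proves this by averaging a Riemannian metric and using the exponential map, or by linearizing at a fixed point via Bochner's theorem for the group $\Z/2$. Hence $\cZ^\sigma\subset\cZ$ and $\Sigma^{\sigma_o}\subset\Sigma$ are embedded submanifolds. For $\Sigma^{\sigma_o}$ this is also explicit from Theorem \ref{preservesM}: in the coordinates $s$ it is the linear subspace $s_i=s_{n-i+1}$, and for $\theta_o$ it is the real subspace $s_i=\bar s_{n-i+1}$. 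The structure maps are then restrictions of smooth maps, hence smooth, and $\epsilon$ restricted to $\Sigma^{\sigma_o}$ remains an embedding.

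The main obstacle is showing that $s|_{\cZ^\sigma}:\cZ^\sigma\to\Sigma^{\sigma_o}$ is a surjective submersion. Surjectivity is immediate from the unit section. For the submersion property, I work at a point $g\in\cZ^\sigma$ with $x=s(g)$. The differential $d\sigma_g$ is an involution of $T_g\cZ$, and $T_g\cZ^\sigma$ is its $+1$ eigenspace; likewise $T_x\Sigma^{\sigma_o}$ is the $+1$ eigenspace of $d(\sigma_o)_x$. Since $ds_g\circ d\sigma_g=d(\sigma_o)_x\circ ds_g$ and $ds_g$ is surjective, take $v\in T_x\Sigma^{\sigma_o}$ and choose any $w$ with $ds_g(w)=v$. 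Then $\tfrac12(w+d\sigma_g w)$ is $\sigma$-invariant and still maps to $v$. So $ds$ restricted to the fixed subspaces is onto, and the same holds for $t=s$. This gives the Lie groupoid structure, and the inclusions $(j_1,(j_1)_0)$ and $(j_2,(j_2)_0)$ are injective immersions compatible with all structure maps. By the Definition of Lie subgroupoid, the fixed point groupoids are Lie subgroupoids of $\cZ\rightrightarrows\Sigma$.

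As an alternative route, I would apply Proposition \ref{pro:NLiegroupoid} to $Y=\Sigma^{\sigma_o}$. However, the anchor of $\cZ$ is zero, since $s=t$, so that criterion does not apply directly. That is why I would rely on the averaging argument above instead.
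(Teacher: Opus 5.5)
Your proposal is correct and follows essentially the same route as the paper: the fixed point sets are embedded submanifolds because $\sigma,\sigma_o$ (resp.\ $\theta,\theta_o$) are smooth involutions, $s|_{\cZ^\sigma}$ is surjective via the units $(I,M)$, it is a submersion by compatibility of $s$ with the involutions, and $t=s$ handles the target. Your averaging step $\tfrac12(w+d\sigma_g w)$ and your closure checks under multiplication and inversion spell out what the paper leaves implicit in its one-line justification \emph{since $s$ is a surjective submersion and $\sigma,\sigma_o$ are involutions}.
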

\begin{proof}
The fixed point sets $\cZ^{\sigma}$ and $\Sigma^{\sigma_o}$ are smooth submanifolds because $\sigma$ and $\sigma_o$ are both (isometric) involutions. The restriction $s|_{\cZ^\sigma}:\cZ^\sigma\rightrightarrows \Sigma^{\sigma_o}$  is surjective, because $(1,M)\in\cZ^\sigma$ for all $M\in\Sigma^{\sigma_o}$ and $s(1,M)=M$. Moreover, since $s$ is a surjective submersions, and $\sigma$, $\sigma_o$ are involutions, the restriction $s|_{\cZ^\sigma}$ is a submersions also. Thus $s|_{\cZ^\sigma}\to\Sigma^{\sigma_o}$ is a surjective submersion. The map $t|_{\cZ^\sigma}$ is also a surjective submersion because $t=s$. The same argument works for $(\theta,\theta_0)$.
\end{proof}

In general, we have:
\begin{lm}\label{lm:commute}
Given a Lie groupoid $\cG\rightrightarrows X$ and two commuting Lie groupoid morphisms $(\sigma,\sigma_o)$ $(\theta,\theta_o)$ onto itself, which are both involutions, and such that $\cG^\sigma\rightrightarrows X^{\sigma_0}$ is a Lie groupoid, then $(\theta|_{\cG^\sigma},\theta_0|_{X^{\sigma_0}})$ is a Lie groupoid morphism from $\cG^\sigma\rightrightarrows X^{\sigma_0}$ onto itself and $\theta|_{\cG^\sigma}$, $\theta_0|_{X^{\sigma_0}}$ are both involutions.
\end{lm}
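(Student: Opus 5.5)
The plan is to check directly the three ingredients: that the restrictions land in the right spaces, that they are involutions, and that they are compatible with the structure maps.

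\emph{Restrictions are well defined.} Let $g\in\cG^\sigma$. Since $\sigma\circ\theta=\theta\circ\sigma$, we get $\sigma(\theta(g))=\theta(\sigma(g))=\theta(g)$, so $\theta(g)\in\cG^\sigma$. In the same way, $\sigma_o\circ\theta_o=\theta_o\circ\sigma_o$ gives $\theta_o(X^{\sigma_o})\subset X^{\sigma_o}$. Here the commutativity of the base maps is read off from the commutativity of the groupoid morphisms. Indeed, $\sigma_o\circ s=s\circ\sigma$, $\theta_o\circ s=s\circ\theta$, and $s$ is surjective, so
\[
\sigma_o\theta_o s=s\,\sigma\theta=s\,\theta\sigma=\theta_o\sigma_o s .
\]
Smoothness of $\theta|_{\cG^\sigma}$ and $\theta_o|_{X^{\sigma_o}}$ follows because they are restrictions of smooth maps to embedded submanifolds, and their images lie in those submanifolds. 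The fixed point sets are embedded because $\cG^\sigma\rightrightarrows X^{\sigma_o}$ is assumed to be a Lie groupoid, and it is a Lie subgroupoid as in Corollary \ref{cor:fixedpoint_groupoid}.

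\emph{Involutions.} Both restrictions still square to the identity, since $\theta^2=\id$ and $\theta_o^2=\id$. In particular each is a bijection of $\cG^\sigma$, respectively of $X^{\sigma_o}$, onto itself, which gives the ``onto itself'' part of the statement.

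\emph{Morphism property.} The structure maps of $\cG^\sigma\rightrightarrows X^{\sigma_o}$ are restrictions of those of $\cG$. The identities $s\circ\theta=\theta_o\circ s$ and $t\circ\theta=\theta_o\circ t$ therefore restrict immediately. If $(h,g)\in(\cG^\sigma)^{(2)}$, then $(h,g)\in\cG^{(2)}$, so $\theta(\fm(h,g))=\fm(\theta(h),\theta(g))$ holds in $\cG$. Both sides lie in $\cG^\sigma$, and the multiplication of $\cG^\sigma$ is the restricted one. Hence the identity holds in $\cG^\sigma$.

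The main point needing care is smoothness of the restricted maps as maps into the fixed point submanifolds, which requires these to be embedded. This is supplied by the Lie subgroupoid hypothesis; for $X^{\sigma_o}$ it also follows from the fact that fixed point sets of involutions are embedded.
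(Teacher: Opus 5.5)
Your proof is correct and follows the paper's argument: commutativity gives $\theta(\cG^\sigma)\subset\cG^\sigma$ and $\theta_o(X^{\sigma_o})\subset X^{\sigma_o}$, the involution property is inherited by the restrictions, and your explicit checks (base commutation via surjectivity of $s$, the morphism identities, smoothness) fill in what the paper leaves implicit. One small adjustment: the lemma only assumes that $\cG^\sigma\rightrightarrows X^{\sigma_o}$ is a Lie groupoid, and the paper's notion of subgroupoid only requires injective immersions, so embeddedness of $\cG^\sigma$ is better justified the way you justify it for $X^{\sigma_o}$, namely that fixed point sets of smooth involutions are embedded submanifolds.
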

\begin{proof}
Since $\theta\circ\sigma=\sigma\circ\theta$ so $\theta(\cG^\sigma)\subset \cG^\sigma$ and $\theta|_{\cG^\sigma}$ is an involution on $\cG^\sigma$ because $\theta$ is an involution on $\cG$. Similarly, since $\theta_0\circ\sigma_0=\sigma_0\circ\theta_0$, so $\theta_0(X^{\sigma_0})\subset X^{\sigma_0}$ and $\theta_0|_{X^{\sigma_0}}$ is an involution on $X^{\sigma_0}$. 
\end{proof}

Next, combining Proposition \ref{pro:commute}, Lemma \ref{lm:commute}, and Corollary \ref{cor:fixedpoint_groupoid}, we obtain:
\begin{cor}\label{cor:desired_groupoid}
$ (\cZ^{\sigma})^\theta \rightrightarrows (\Sigma^{\sigma_0})^{\theta_0} $ is a Lie groupoid and a Lie subgroupoid of the universal centralizer groupoid with the natural inclusions $k: (\cZ^{\sigma})^\theta\hookrightarrow \cZ$ and  $k_0: (\Sigma^{\sigma})^{\theta}\hookrightarrow \Sigma$.
\qed
\end{cor}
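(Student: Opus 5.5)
The plan is to assemble the statement from three earlier results: Proposition \ref{pro:commute}, Corollary \ref{cor:fixedpoint_groupoid} and Lemma \ref{lm:commute}.

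\emph{Step 1: the $\sigma$-fixed groupoid.} Proposition \ref{pro:commute} shows that $(\sigma,\sigma_0)$ and $(\theta,\theta_0)$ are commuting involutory Lie groupoid morphisms of $\cZ\rightrightarrows\Sigma$. Corollary \ref{cor:fixedpoint_groupoid} then gives that $\cZ^\sigma\rightrightarrows\Sigma^{\sigma_0}$ is a Lie groupoid, and a Lie subgroupoid of $\cZ$ via $j_1,(j_1)_0$.

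\emph{Step 2: restrict $\theta$.} Lemma \ref{lm:commute} shows that $(\theta|_{\cZ^\sigma},\theta_0|_{\Sigma^{\sigma_0}})$ is an involutory Lie groupoid morphism of $\cZ^\sigma\rightrightarrows\Sigma^{\sigma_0}$. Compatibility with $s$ and with multiplication is inherited directly from $\cZ$.

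\emph{Step 3: rerun the corollary on $\cZ^\sigma$.} Apply the argument of Corollary \ref{cor:fixedpoint_groupoid} to this smaller groupoid and this involution.

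The fixed sets $(\cZ^\sigma)^\theta$ and $(\Sigma^{\sigma_0})^{\theta_0}$ are smooth submanifolds, being fixed point sets of involutions on manifolds. The cleanest justification is to write them as fixed sets of a compact group action, namely the group $\Z_2\times\Z_2$ generated by the two commuting involutions, so that each is a closed embedded submanifold of $\cZ$ or $\Sigma$.

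The unit map lands in the fixed set: $(I,M)$ is $\theta$-fixed whenever $M$ is $\theta_0$-fixed, because $\Ad_{F_\theta(M)}\bar I=I$. Hence $s$ restricted to $(\cZ^\sigma)^\theta$ is surjective onto $(\Sigma^{\sigma_0})^{\theta_0}$. Since $t=s$, the same holds for $t$.

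The closure properties are routine. Multiplication, inverse and units preserve the fixed set because $\theta$ is a groupoid morphism, and inversion $(E,M)\mapsto(E^{-1},M)$ visibly commutes with $\theta$.

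\emph{Step 4: the submersion property (main obstacle).} The real work is showing that $s|_{(\cZ^\sigma)^\theta}$ is a submersion. At a point $g$ fixed by the involution, $T_g$ of the fixed set is the $+1$ eigenspace of $d\theta$, and likewise at $s(g)$ on the base. The relation $s\circ\theta=\theta_0\circ s$ says $ds$ intertwines $d\theta$ and $d\theta_0$. It therefore maps $+1$ eigenspaces to $+1$ eigenspaces, and surjectivity of $ds$ forces surjectivity on these eigenspaces: any $v$ in the base $+1$ eigenspace lifts to some $w$, and then $\tfrac12(w+d\theta\, w)$ is a $+1$ eigenvector lifting $v$.

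This averaging argument is exactly what Corollary \ref{cor:fixedpoint_groupoid} tacitly uses. I would spell it out once and apply it to both involutions in succession.

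\emph{Step 5: conclusion.} The groupoid $(\cZ^\sigma)^\theta\rightrightarrows(\Sigma^{\sigma_0})^{\theta_0}$ is a Lie groupoid. Composing its inclusion into $\cZ^\sigma$ with $j_1$ gives injective immersions $k$ and $k_0$ that respect the structure maps, so it is a Lie subgroupoid of $\cZ\rightrightarrows\Sigma$.
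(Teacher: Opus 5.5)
Your proposal is correct and follows essentially the same route as the paper, which obtains the corollary by combining Proposition \ref{pro:commute}, Lemma \ref{lm:commute} and Corollary \ref{cor:fixedpoint_groupoid}, i.e.\ by rerunning the fixed-point argument for $\theta$ on $\cZ^\sigma$. Your averaging argument $\tfrac12(w+d\theta\,w)$ and the $\Z_2\times\Z_2$ description of the fixed set simply make explicit the smoothness and submersion steps that the paper leaves tacit.
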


Before stating the main results, let us fix some notations. Recall that $\omega_\bC=\omega_1+\sqrt{-1}\omega_2$ is a holomorphic symplectic form, where $\omega_1$ and $\omega_2$ are real valued symplectic forms on $\cZ$.
Denote $\overline{\omega}_\bC:=\omega_1-\sqrt{-1}\omega_2$. 
The key result of this section is the following:

\begin{thm}\label{thm:main}
(i) $\sigma^*\omega_\bC=\omega_\bC$ along the units $\epsilon(\Sigma)$. (ii) $\theta^*\omega_{\bC}=-\overline{\omega}_\bC$ along the units $\epsilon(\Sigma)$.
\end{thm}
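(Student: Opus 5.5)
At a unit $1_M=(I,M)$, $M\in\Sigma$, I will compute both sides using the splitting from the proof of Theorem \ref{thm:universalsymplectic}. Write
\[T_{1_M}\cZ=T_{1_M}\epsilon(\Sigma)\oplus T_{1_M}s^{-1}(M).\]
In that proof the value of $\omega_\bC$ at $1_M$ was found to be
\[
\omega_\bC(u,v)=(\xi,\varrho)-(\eta,\rho).
\]
Here $i_*u=(\xi,\rho^L(M))$ and $i_*v=(\eta,\varrho^L(M))$, with $\xi,\eta\in\fg_M$ and $\rho^L(M),\varrho^L(M)\in T_M\Sigma$. The horizontal–horizontal and vertical–vertical terms vanish. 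By Proposition \ref{pro:commute}, $\sigma$ and $\theta$ are groupoid morphisms, so they fix the unit manifold $\epsilon(\Sigma)$ setwise, mapping $1_M$ to $1_{\sigma_0(M)}$ and $1_{\theta_0(M)}$ respectively. Their differentials therefore preserve the splitting: horizontal vectors go to horizontal vectors, and $s$-vertical vectors go to $s$-vertical vectors. It then suffices to compare the single cross term $(\xi,\varrho)-(\eta,\rho)$ after applying $d\sigma$ (resp.\ $d\theta$).

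For (i), I compute the differentials explicitly. Put $F=F_\sigma(M)$ and $M'=\sigma_0(M)=\Ad_F M^{-T}$.

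On vertical vectors, a curve $(\exp(\tau\xi),M)$ is sent to $(F\exp(-\tau\xi^T)F^{-1},M')$. Hence $\xi\mapsto -\Ad_F\xi^T$, and this lies in $\fg_{M'}$.

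On horizontal vectors, $(I,M(\tau))$ is sent to $(F(M(\tau))M(\tau)^{-T}F(M(\tau))^{-1})$ in the second slot. Its left-trivialized derivative at $M'$ is
\[
\rho'=-\Ad_F\rho^T+\big(\text{terms from }dF\big),
\]
and the $dF$ contribution has the form $\Ad_{M'^{-1}}\zeta-\zeta$ with $\zeta=(dF)F^{-1}$.

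Pairing with a vertical $\xi'\in\fg_{M'}$, the $dF$ term drops out, because $(\xi',\Ad_{M'^{-1}}\zeta-\zeta)=0$ when $\Ad_{M'}\xi'=\xi'$. The remaining pairing is
\[
(\Ad_F\xi^T,\Ad_F\varrho^T)=(\xi,\varrho),
\]
using the Ad-invariance and transpose-invariance of the trace form. The two minus signs cancel, so $\sigma^*\omega_\bC=\omega_\bC$ at the units.

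For (ii) the argument is the same. With $F=F_\theta(M)$, vertical vectors go by $\xi\mapsto\Ad_F\bar\xi$, with no sign because $\theta$ acts on $B$ by $\bar B$ rather than $\bar B^{-1}$. Horizontal vectors go by $\rho\mapsto-\Ad_F\bar\rho$ plus a $dF$-term, because $\theta$ acts on $M$ by $\bar M^{-1}$. Again the $dF$-term pairs to zero against $\fg_{M'}$. The cross term then becomes
\[
-(\bar\xi,\bar\varrho)+(\bar\eta,\bar\rho)=-\overline{\omega_\bC(u,v)}.
\]
Here I use that the Killing/trace form is real on real matrices, so $(\bar X,\bar Y)=\overline{(X,Y)}$. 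This gives $\theta^*\omega_\bC=-\overline{\omega}_\bC$ along $\epsilon(\Sigma)$.

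The main obstacle I expect is the horizontal differential. First, $F$ depends on $M$ (for instance $\tSz_1$ or $\tQz_{n/(n+1)}$), so its derivative must be controlled. Second, one must check that the left-trivialization conventions produce exactly the $\Ad_{M'^{-1}}\zeta-\zeta$ form, which is what lets the $dF$-term pair to zero with $\fg_{M'}$. Third, one must verify that the transpose and inverse identities give the stated signs. I would first carry out the differentiation of $t\mapsto F(t)X(t)F(t)^{-1}$ in left trivialization to confirm the annihilation.
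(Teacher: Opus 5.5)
Your proposal is correct and follows essentially the same route as the paper: split $T_{1_M}\cZ$ into unit and $s$-fibre directions, keep only the cross terms $(\xi,\varrho)-(\eta,\rho)$, compute $d\sigma$ and $d\theta$ explicitly, note that the $dF$-contribution $\Ad_{M'^{-1}}\zeta-\zeta$ pairs to zero with $\fg_{M'}$, and finish with the transpose and conjugation invariance of the trace form. One small correction in (ii): in left trivialization the image of $\rho$ is $-\Ad_{F\bar M}\bar\rho=-\Ad_{M'^{-1}}\Ad_F\bar\rho$ rather than $-\Ad_F\bar\rho$, because inversion without a transpose reverses the order; the extra $\Ad_{M'^{-1}}$ disappears when paired with elements of $\fg_{M'}$, so your conclusion $-\overline{\omega_\bC(u,v)}$ is unaffected.
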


\begin{proof}
We shall use a similar idea to that in the proof of Theorem \ref{thm:universalsymplectic}. Recall the natural splitting of the tangent space $T_{(id_G,M)}\cZ=T_{(id_G,M)}\epsilon(\Sigma)\oplus T_{(id_G,M)}(s^{-1}(M))$ along the units, and write $u=u^H+u^F$ for the associated decomposition of $u\in T_{(id_G,M)}\cZ$.

We begin by expressing the tangent vectors as velocity vectors of suitable curves.
A tangent vector $u\in T_{(id_G,M)}\cZ$ can be expressed by
$$u=\tfrac{d}{dt}|_{t=0}(r_E(t),L_M(r_M(t)))\in T_{(id_G,M)}\cZ,$$
where $r_E(t)$ and $r_M(t)$ are curves in $G$ with $r_E(0)=id_G=r_M(0)$, such that the curve $L_M(r_M(t))\in\Sigma$ and $r_E(t)\in G$, which satisfy the commuting property for some neighborhood of $0$. Here $L_M$ denotes the left group multiplication by $M$. Hence the tangent vector
$$u=(r'_E(0),(L_M)_*(r'_M(0)) )=( (r'_E(0))^L(id_G), (r'_M(0))^L(M) ),$$
has the decomposition $u^H=\frac{d}{dt}|_{t=0}(id_G,L_M(r_M(t)))=(0,(r'_M(0))^L(M))$ and $u^F=u-u^H=( (r'_E(0))^L(id_G),0 )$. We shall express $u^F=\frac{d}{dt}|_{t=0}(\tilde{r}_E(t),M)$ for some $\tilde{r}_E(t)\in G_M$, where $\tilde{r}_E(0)=id_G$, $\tilde{r}'_E(0)=r'_E(0) \in \fg_M$.
Here the notation $\xi^L(g)$ means the left invariant vector field of $\xi\in\fg$ evaluated at $g\in G$.

Similarly, we express another tangent vector $v$ by
$$v=\tfrac{d}{dt}|_{t=0}(\delta_E(t),L_M(\delta_M(t)))\in T_{(id_G,M)}\cZ,$$ with the same condition.
Then $v^H=\frac{d}{dt}|_{t=0}(id_G,L_M(\delta_M(t)))=(0,(\delta'_M(0))^L(M))$ and $v^F=v-v^H=( (\delta'_E(0))^L(id_G),0 )$. We shall express $v^F=\frac{d}{dt}|_{t=0}(\tilde{\delta}_E(t),M)$ for some $\tilde{\delta}_E(t)\in G_M$, where $\tilde{\delta}_E(0)=id_G$, $\tilde{\delta}'_E(0)=\delta'_E(0) \in \fg_M$.

To simplify notation, the holomorphic symplectic form $\omega_\bC$ will be denoted by $\omega$ for the rest of the proof. Recall from the calculation in the proof of Theorem \ref{thm:universalsymplectic} that
$$\omega_{(id_G,M)}(u,v)=\omega_{(id_G,M)}(u^H,v^F)+\omega_{(id_G,M)}(u^F,v^H)$$
because $\omega_{(id_G,M)}(u^H,v^H)=0$ and $\omega_{(id_G,M)}(u^F,v^F)=0$. Thus we shall only need to compute the cross terms.

{\bf (i) The map $\sigma$}

Since $(\sigma,\sigma_0)$ is a groupoid morphism, direct calculation shows that $(\sigma_*u)^H=\sigma_*(u^H)$ and $(\sigma_*u)^F=\sigma_*(u^F)$. 
Thus $\omega_{\sigma(id_G,M)}(\sigma_*u,\sigma_*v)=\omega_{\sigma(id_G,M)}(\sigma_*(u^H),\sigma_*(v^F))
+\omega_{\sigma(id_G,M)}(\sigma_*(u^F),\sigma_*(v^H)),$  and  we only need to find the formula for $\sigma_*(u^H)$ and $\sigma_*(u^F)$.

Denote the tangent vectors by $\xi_1=\tilde{r}_E'(0),~\eta_1=\tilde{\delta}_E'(0)\in \fg_M$, and $\xi_2=r_M'(0), ~\eta_2=\delta_M'(0)\in\fg$, so $u=( \xi_1^L(id_G),\xi_2^L(M) )$ and $v=(\eta_1^L(id_G),\eta^L_2(M))$.
Then using the curves $\tilde{r}_E(t)$ and $\tilde{\delta}_E(t)$, we obtain
\begin{eqnarray*}
&&\sigma_*(u^F)=\tfrac{d}{dt}|_{t=0} \left( F_\sigma(M)(\tilde{r}_E(t)^{-T})F_\sigma(M)^{-1},  \sigma_o(M) \right)=\left( (-\Ad_{F_\sigma(M)} \xi_1^T)^L (id_G), 0 \right) \\
&&\sigma_*(v^F)=\tfrac{d}{dt}|_{t=0} \left( F_\sigma(M)(\tilde{\delta}_E(t)^{-T})F_\sigma(M)^{-1},  \sigma_o(M) \right)=\left( (-\Ad_{F_\sigma(M)}\eta_1^T)^L(id_G),0 \right).
\end{eqnarray*}
Next, $\sigma_*(u^H)=\frac{d}{dt}|_{t=0} (id_G, F_\sigma(L_M(r_M(t))) (L_M(r_M(t)))^{-T} F_\sigma(L_M(r_M(t)))^{-1} \in T_{(id_G,\sigma_o(M))}\epsilon(\Sigma)$, so
\begin{eqnarray*}
&&\sigma_*(u^H)=\left( 0, \left(\Ad_{F_\sigma(M) M^T F_\sigma(M)^{-1}}( \tfrac{d}{dt}|_{t=0}F_\sigma(L_M(r_M(t)))F_\sigma(M)^{-1}) \right)^L (\sigma_o(M)) \right) \\&&
+ \left( 0, \left( -\Ad_{F_\sigma(M)}\xi_2^T \right)^L(\sigma_o(M) ) \right) + \left( 0,\left( -\tfrac{d}{dt}|_{t=0}F_\sigma(L_M(r_M(t)))F_\sigma(M)^{-1} \right)^L( \sigma_o(M) ) \right),
\end{eqnarray*}
and
\begin{eqnarray*}
&&\sigma_*(v^H)=\left( 0, \left(\Ad_{F_\sigma(M)M^TF_\sigma(M)^{-1}}( \tfrac{d}{dt}|_{t=0}F_\sigma(L_M(\delta_M(t)))F_\sigma(M)^{-1}) \right)^L (\sigma_o(M) ) \right) \\&&
+ \left( 0, \left(-\Ad_{F_\sigma(M)}\eta_2^T\right)^L(\sigma_o(M) )\right) + \left( 0, \left(-\tfrac{d}{dt}|_{t=0}F_\sigma(L_M(\delta_M(t)))F_\sigma(M)^{-1} \right)^L( \sigma_o(M) ) \right).
\end{eqnarray*}

Calculating the value of the symplectic form with these curves, we obtain
\begin{eqnarray*}
\lefteqn{2 \omega_{(id_G,\sigma_o(M))}(\sigma_*(u^H),\sigma_*(v^F)) }\\
&&=\left( \eta_1^T,-\xi_2^T-\Ad_{M^{-T}}(\xi_2^T) \right)+\left( \Ad_{M^{-T}} (\eta_1^T) -\Ad_{M^T} (\eta_1^T), \Ad_{F_\sigma(M)^{-1}}( \tfrac{d}{dt}|_{t=0} F_\sigma(L_Mr_M(t))F_\sigma(M)^{-1}) \right)
\end{eqnarray*}
Since $\eta_1\in\fg_M$, $\Ad_M\eta_1=\eta_1=\Ad_{M^{-1}}\eta_1$ and $\Ad_{M^{-T}}(\eta_1^T)=\eta_1^T=\Ad_{M^T}\eta^T_1$. 
Hence the second term in the last equation vanishes, and we have obtained
\begin{equation}\label{eq:sigma_uHvF}
2\omega_{(id_G,\sigma_o(M) )}(\sigma_*(u^H),\sigma_*(v^F))=(\eta_1^T,-\xi_2^T-\Ad_{M^{-T}}(\xi_2^T) )=2(\eta_1^T, -\xi_2^T)
\end{equation}

A similar calculation gives 
\begin{equation}\label{eq:sigma_uFvH}
2\omega_{(id_G, \sigma_o(M) )}(\sigma_*(u^F),\sigma_*(v^H))=-(\xi_1^T,-\eta_2^T-\Ad_{M^{-T}}(\eta_2^T) )=-2(\xi_1^T,-\eta_2^T)
\end{equation}

On the other hand, we know from the calculation in the proof of Theorem \ref{thm:universalsymplectic} that
\begin{equation}\label{eq:uHvF}
2\omega_{(id_G,M)}(u^H,v^F)=(-\eta_1,\xi_2+\Ad_M\xi_2)=2(-\eta_1,\ \xi_2)
\end{equation}
and
\begin{equation}\label{eq:uFvH}
2\omega_{(id_G,M)}(u^F,v^H)=(\xi_1,\eta_2+\Ad_M\eta_2)=2(\xi_1,\ \eta_2)
\end{equation}
Since $(X,Y)=\Tr(XY)$ for $\SL(n,\bC)$, we have
$$(\eta_1^T,-\xi_2^T )=-\Tr(\xi_2\eta_1)=-(\eta_1,\xi_2), \quad\mbox{and}\quad (-\xi_1^T,-\eta_2^T)=\Tr(\xi_1\eta_2)=(\xi_1,\eta_2),$$
so equation (\ref{eq:sigma_uHvF}) is equation (\ref{eq:uHvF}), and equation (\ref{eq:sigma_uFvH}) is equation (\ref{eq:uFvH}). 
Since $u$, $v$ are arbitrary, we conclude that $\sigma^*\omega=\omega$ along $\epsilon(\Sigma)$ in $\cZ$.

{\bf (ii) The map $\theta$}

For the same reason as before, we only need to calculate $\theta_*(u^H)$ and $\theta_*(u^F)$.
We have
\[(\theta^*\omega)_{(id_G,M)}(u,v)
=\omega_{\theta(id_G,M)}(\theta_*(u^H),\theta_*(v^F))+\omega_{\theta(id_G,M)}(\theta_*(u^F),\theta_*(v^H)).
\]

Using the same description of the tangent vector $u$ and $v$ and arguing as in (i), we have
\[\theta_*(u^F)=( (\Ad_{F_\theta(M)} \overline{\xi}_1)^L (id_G), 0 )\quad \mbox{and} \quad
\theta_*(v^F)=( (\Ad_{F_\theta(M)}\overline{\eta}_1)^L(id_G),0 ).\]
for the vertical part.

For the horizontal part we have:
\begin{eqnarray*}
&&\theta_*(u^H)=\left( 0, \left( \Ad_{F_\theta(M) \overline{M} F_\theta(M)^{-1}}( \tfrac{d}{dt}|_{t=0}F_\theta(L_M(r_M(t)))F_\theta(M)^{-1}) \right)^L (\theta_o(M) ) \right) \\&&
+ \left( 0, \left( -\Ad_{F_\theta(M)}\overline{\xi}_2 \right)^L(\theta_o(M) ) \right) + \left( 0,\left( -\tfrac{d}{dt}|_{t=0}F_\theta(L_M(r_M(t)))F_\theta(M)^{-1} \right)^L( \theta_o(M) ) \right),
\end{eqnarray*}
and
\begin{eqnarray*}
&&\theta_*(v^H)=\left( 0, \left( \Ad_{F_\theta(M) \overline{M} F_\theta(M)^{-1}}( \tfrac{d}{dt}|_{t=0}F_\theta(L_M(\delta_M(t)))F_\theta(M)^{-1}) \right)^L (\theta_o(M) ) \right) \\&&
+ \left( 0, \left( -\Ad_{F_\theta(M)}\overline{\eta}_2 \right)^L(\theta_o(M) ) \right) + \left( 0,\left( -\tfrac{d}{dt}|_{t=0}F_\theta(L_M(\delta_M(t)))F_\theta(M)^{-1} \right)^L( \theta_o(M) ) \right).
\end{eqnarray*}

Substituting into $\omega$ we have:
\begin{eqnarray*}
\lefteqn{2\omega_{(id_G,\theta_o(M))}(\theta_*(u^H),\theta_*(v^F)) }\\
&&= \left( -\overline{\eta}_1,-\overline{\xi}_2-\Ad_{\overline{M}^{-1}}\overline{\xi}_2 \right)
 + \left( \Ad_{\overline{M}}\overline{\eta}_1-\Ad_{\overline{M}^{-1}}\overline{\eta}_1,\Ad_{F_\theta(M)^{-1}}\tfrac{d}{dt}|_{t=0}F_\theta(L_M(r_M(t)))B(M)^{-1} \right)
\end{eqnarray*}

Since $\eta_1\in\fg_M$, the first term and the third term of the last equation cancel out. Thus we obtain
$$2\omega_{(id_G,\theta_o(M))}(\theta_*(u^H),\theta_*(v^F))=(\overline{\eta}_1,\overline{\xi}_2+\Ad_{\overline{M}^{-1}}\overline{\xi}_2)=2(\overline{\eta}_1,\overline{\xi}_2)$$

A similar calculation gives us
$$2\omega_{(id_G,\theta_o(M))}(\theta_*(u^F),\theta_*(v^H))=-2\omega_{(id_G,\theta_o(M))}(\theta_*(v^H),\theta_*(u^F))=-2(\overline{\xi}_1,\overline{\eta}_2).$$
So $\omega_{(id_G,\theta_o(M))}(\theta_*(u),\theta_*(v))=(\overline{\eta}_1,\overline{\xi}_2)-(\overline{\xi}_1,\overline{\eta}_2)\in\bC$,
comparing to $\omega_{(id_G,M)}(u,v)=(\xi_1,\eta_2)-(\eta_1,\xi_2) \in\bC$, we see that $$\omega_{(id_G,\theta_o(M)}(\theta_*(u),\theta_*(v))=-\overline{\omega_{(id_G,M)}(u,v)}. $$
We conclude that $\theta^*\omega=-\overline{\omega}$ along $\epsilon(\Sigma)$ in $\cZ$.
\end{proof}

Since not all s-fibres are not connected in our case, we shall need the following modification of Proposition \ref{pro:unit_to_all}:
\begin{pro}\label{pro:dense}
Given a Lie groupoid $\cG\rightrightarrows X$.  Let $\omega$ and $\omega'$ be two multiplicative 2-forms on $\cG$. Suppose that $d\omega=d\omega'$ on $\cG$ and $\omega=\omega'$ along the unit $\epsilon(X)$. If the set
$$\check{X}=\{x\in X ~|\ s^{-1}(x) \mbox{ is connected} \} $$ is an open dense subset of $X$, then $\omega=\omega'$ on the whole groupoid $\cG$.
\qed
\end{pro}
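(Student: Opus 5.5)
The plan is to run the standard argument behind Proposition \ref{pro:unit_to_all} on the open set $s^{-1}(\check X)$, and then pass to all of $\cG$ by density and continuity. Put $\alpha=\omega-\omega'$. Then $\alpha$ is multiplicative, because multiplicativity is linear in the form. It is closed, since $d\omega=d\omega'$. It also vanishes at every point of $\epsilon(X)$, meaning on all of $T_{1_x}\cG$ and not merely on $T\epsilon(X)$. We must show $\alpha\equiv 0$.

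The first step is the infinitesimal statement. Fix $g\in\cG$ with $s(g)=x$. Multiplicativity together with closedness give the standard identities: for a left-invariant vector field $\xi^L$ generated by a section $\xi$ of the Lie algebroid, $\iota_{\xi^L}\alpha = s^*\beta_\xi$ for some form $\beta_\xi$ on $X$ determined by $\alpha$ along the units. Here $\beta_\xi$ is the pullback by $\epsilon$ of $\iota_{\xi}\alpha$ restricted to units, which is zero because $\alpha|_{\epsilon(X)}=0$. Consequently $\iota_{\xi^L}\alpha=0$, and by Cartan's formula $\mathcal L_{\xi^L}\alpha=d\iota_{\xi^L}\alpha+\iota_{\xi^L}d\alpha=0$, using $d\alpha=0$. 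The flows of such $\xi^L$ preserve the $s$-fibres. Starting from $1_x$, they reach every point of the identity component of $s^{-1}(x)$. Along these flows $\alpha$ is transported, so $\alpha$ vanishes at every point lying in the connected component of $s^{-1}(x)$ that contains $1_x$. I would cite the argument of \cite{MeXX} here rather than redo it. If completeness of the flows is an issue, I would use compactly supported sections $\xi$ and concatenate local flows; the connectedness of the fibre still lets us reach any point by finitely many such pieces.

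The second step uses the hypothesis. For $x\in\check X$ the fibre $s^{-1}(x)$ is connected, so $\alpha$ vanishes on all of $s^{-1}(x)$. Hence $\alpha=0$ on $s^{-1}(\check X)$.

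The third step, density, is the delicate one. Since $s$ is a surjective submersion, it is an open map. Preimages of dense sets under open continuous maps are dense: if an open set $U\subset\cG$ missed $s^{-1}(\check X)$, then $s(U)$ would be a nonempty open set missing $\check X$, which is a contradiction. So $s^{-1}(\check X)$ is dense in $\cG$. Since $\alpha$ is a smooth, hence continuous, section of $\Lambda^2T^*\cG$ vanishing on a dense set, $\alpha\equiv0$, that is, $\omega=\omega'$ on $\cG$.

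I expect the main obstacle to be the first step. One has to check carefully that the identity $\iota_{\xi^L}\alpha=s^*(\cdot)$ follows from multiplicativity alone, with the sign and left/right conventions of this paper. One also has to check that vanishing at the units is exactly what makes the right-hand side zero. The topological density step is routine.
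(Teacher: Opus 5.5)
Your proposal is correct and is the argument the paper has in mind; the paper gives no proof and presents the statement as a modification of Proposition \ref{pro:unit_to_all}, namely the flow-invariance argument on the connected $s$-fibres over $\check X$ followed by density of $s^{-1}(\check X)$, which you justify correctly via openness of $s$ (so only density of $\check X$ is actually used). The one slip is a left/right convention: the invariant vector fields whose flows preserve the $s$-fibres satisfy $\iota_{a^r}\alpha=t^*\mu(a)$ with $\mu(a)=\epsilon^*(\iota_a\alpha)$, whereas those with $\iota\alpha=s^*(\cdot)$ are tangent to the $t$-fibres, but this is harmless because $\mu=0$ once $\alpha$ vanishes on all of $T_{1_x}\cG$.
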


\begin{cor}\label{cor:main}
(i) $\sigma:\cZ \to \cZ$ is a holomorphic symplectomorphism, i.e.\ $\sigma^*(\omega_\bC)=\omega_\bC$ on $\cZ$ (ii) $\theta^*(\omega_\bC)=-\overline{\omega}_\bC$ on $\cZ$.
\end{cor}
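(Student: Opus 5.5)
The plan is to deduce Corollary \ref{cor:main} from Theorem \ref{thm:main} by means of Proposition \ref{pro:dense}. That proposition needs two multiplicative forms that agree along the units, have equal exterior derivatives, and live on a groupoid whose base contains an open dense set over which the $s$-fibres are connected.

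For (i), I take $\omega=\sigma^*\omega_\bC$ and $\omega'=\omega_\bC$.
\begin{enumerate}
\item By Proposition \ref{pro:commute}, $(\sigma,\sigma_o)$ is a Lie groupoid morphism of $\cZ\rightrightarrows\Sigma$. Lemma \ref{lm:multiplicative} then shows that $\sigma^*\omega_\bC$ is multiplicative.
\item Since $d\omega_\bC=0$ (Theorem \ref{thm:universalsymplectic}), we get $d\sigma^*\omega_\bC=\sigma^*d\omega_\bC=0=d\omega_\bC$.
\item Theorem \ref{thm:main}(i) gives $\sigma^*\omega_\bC=\omega_\bC$ along $\epsilon(\Sigma)$.
\end{enumerate}

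For (ii), I take $\omega=\theta^*\omega_\bC$ and $\omega'=-\overline{\omega}_\bC$.
\begin{enumerate}
\item Complex conjugation commutes with pullback by the structure maps, since these are real maps of the underlying real manifolds. So $\overline{\omega}_\bC$ is multiplicative because $\omega_\bC$ is, and hence so is $-\overline{\omega}_\bC$.
\item As before, $d\theta^*\omega_\bC=0=d(-\overline{\omega}_\bC)$.
\item Agreement along the units is Theorem \ref{thm:main}(ii).
\end{enumerate}
Proposition \ref{pro:dense} is stated for real-valued forms. I will apply it to real and imaginary parts separately, or simply note that its proof is linear in the form and so works verbatim for complex-valued forms.

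The remaining hypothesis, and the main obstacle, is that
$\check\Sigma=\{a\in\Sigma\mid s^{-1}(a)=G_a \text{ connected}\}$
is open and dense in $\Sigma$. I would argue as follows.
\begin{enumerate}
\item The regular semisimple elements form a Zariski open dense subset of $G$. Their intersection with $\Sigma$ is nonempty, because $\Sigma$ meets every regular conjugacy class. It is therefore Zariski open dense in the irreducible variety $\Sigma\cong\C^\ell$ (Proposition \ref{stein}), and hence open dense in the classical topology.
\item For a regular semisimple $a$, the centralizer $G_a$ is a maximal torus when $G$ is simply connected (Steinberg), so it is connected.
\item A further subtlety: the non-semisimple regular elements may have disconnected centralizers, for instance $-I$ times a unipotent element in $\SL_2$. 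This is why Proposition \ref{pro:dense} is needed rather than Proposition \ref{pro:unit_to_all}.
\end{enumerate}
If openness of $\check\Sigma$ itself is an issue, it suffices to exhibit an open dense subset of $\check\Sigma$, which the regular semisimple locus provides. This is what the proof of Proposition \ref{pro:dense} actually uses: agreement of the forms on $s^{-1}$ of an open dense set, followed by continuity.

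With these hypotheses verified, Proposition \ref{pro:dense} yields $\sigma^*\omega_\bC=\omega_\bC$ and $\theta^*\omega_\bC=-\overline{\omega}_\bC$ on all of $\cZ$.

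Finally, $\sigma$ is holomorphic: it is built from $X\mapsto X^{-T}$, conjugation, and $F_\sigma$. In the odd case $F_\sigma$ is constant, and in the even case it is a polynomial in $s$, i.e.\ algebraic in $A$. Together with $\sigma^*\omega_\bC=\omega_\bC$, this makes $\sigma$ a holomorphic symplectomorphism.
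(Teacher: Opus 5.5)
Your proposal is correct and follows essentially the same route as the paper. The paper likewise combines Theorem \ref{thm:main} (agreement along $\epsilon(\Sigma)$) with multiplicativity from Lemma \ref{lm:multiplicative} and closedness, then extends to all of $\cZ$ via Proposition \ref{pro:dense}, using that the semisimple elements of $\Sigma$, whose centralizers are connected tori, are open and dense. You also spell out points the paper leaves implicit: that $-\overline{\omega}_\bC$ is itself multiplicative and closed, and that $\sigma$ is holomorphic because $F_\sigma$ is polynomial in $s$.
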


\begin{proof}
From Theorem \ref{thm:main} we know that  $\sigma^*\omega_\bC=\omega_\bC$ and $\theta^*\omega_\bC=-\overline{\omega}_\bC$ along the units $\epsilon(\Sigma)$. From Lemma \ref{lm:multiplicative}, we know that both $\sigma^*\omega_\bC$ and $\theta^*\omega_\bC$ are multiplicative, and $d(\sigma^*\omega_\bC)=d\omega_\bC$ and $d(\theta^*\omega_\bC)=0=d\omega_\bC$ on $\cZ$.  For $G=\SL_{n+1}\bC$, if $a\in G$ is semisimple, then the stabilizer $G_a$ is connected (in fact a complex algebraic torus), and the set of semisimple elements forms an open dense subset of the set of regular elements. Now apply Proposition \ref{pro:dense} to obtain $\sigma^*(\omega_\bC)=\omega_\bC$ on $\cZ$, and $\theta^*(\omega_\bC)=-\overline{\omega}_\bC$ on $\cZ$.
\end{proof}

We end this section with the following conclusion:

\begin{thm}\label{thm:result}
$ (\cZ^{\sigma} \rightrightarrows \Sigma^{\sigma_o}, j_1^*(\omega_\bC))$ is a holomorphic symplectic groupoid, while  
$ (\cZ^{\theta} \rightrightarrows \Sigma^{\theta_o}, j_2^*(\omega_2)) $ and $( (\cZ^{\sigma})^\theta \rightrightarrows (\Sigma^{\sigma_o})^{\theta_o}, k^*(\omega_2))$ are real symplectic groupoids.
\end{thm}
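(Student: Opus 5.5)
We combine Corollary \ref{cor:fixedpoint_groupoid}, Corollary \ref{cor:desired_groupoid} and Corollary \ref{cor:main}. By the first two, all three fixed point sets are Lie subgroupoids of $\cZ\rightrightarrows\Sigma$, with inclusions $(j_1,(j_1)_0)$, $(j_2,(j_2)_0)$ and $(k,k_0)$ being Lie groupoid morphisms. Lemma \ref{lm:multiplicative} then shows that $j_1^*\omega_\bC$, $j_2^*\omega_2$ and $k^*\omega_2$ are multiplicative; here $\omega_2=\mathrm{Im}\,\omega_\bC$ is multiplicative because multiplicativity is a linear condition. Closedness is inherited from $d\omega_\bC=0$, which was established in Theorem \ref{thm:universalsymplectic}. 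Hence only nondegeneracy remains. By Proposition \ref{pro:nondegenerate}, it suffices to check it at the units, and this is pure linear algebra on the fixed subspace of an involution acting on $T_{1_M}\cZ$.

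\textbf{Case $\sigma$.} Let $V=T_{1_M}\cZ$ with $M\in\Sigma^{\sigma_o}$, and let $\sigma_*$ act on $V$ as a complex-linear involution. Since $\sigma$ is holomorphic, the fixed set $\cZ^\sigma$ is a complex submanifold, so $j_1^*\omega_\bC$ is again of type $(2,0)$. Write $V=V^+\oplus V^-$ for the $\pm1$ eigenspaces. By Corollary \ref{cor:main}(i), for $u\in V^+$ and $v\in V^-$ we have $\omega_\bC(u,v)=\omega_\bC(\sigma_*u,\sigma_*v)=-\omega_\bC(u,v)$, so $\omega_\bC(u,v)=0$. The decomposition is therefore $\omega_\bC$-orthogonal. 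Because $\omega_\bC$ is nondegenerate on $V$, its restriction to $V^+=T_{1_M}\cZ^\sigma$ is nondegenerate.

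\textbf{Case $\theta$.} Here $\theta_*$ is a real-linear (antiholomorphic) involution. Corollary \ref{cor:main}(ii) gives $\theta^*\omega_1=-\omega_1$ and $\theta^*\omega_2=\omega_2$. The same eigenspace argument now applies to the real symplectic form $\omega_2$, which is nondegenerate since it is the imaginary part of a holomorphic symplectic form. We get $\omega_2(V^+,V^-)=0$, so $\omega_2$ is nondegenerate on $V^+=T\cZ^\theta$.

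\textbf{Double fixed point set.} On $\cZ^\sigma$ we use the restricted involution $\theta$ from Lemma \ref{lm:commute}, together with the form $\omega_2|_{\cZ^\sigma}$. This form is nondegenerate: $j_1^*\omega_\bC$ is holomorphic symplectic, so its imaginary part is real symplectic. It is also preserved by $\theta$, since $\theta^*\omega_2=\omega_2$ holds on all of $\cZ$. Repeating the eigenspace argument inside $T\cZ^\sigma$ gives nondegeneracy of $k^*\omega_2$.

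The main subtlety is that we need the identities of Corollary \ref{cor:main} on all of $\cZ$, or at least along the units, and this is already provided. We must also check that the tangent space to a fixed point set is the $+1$ eigenspace of the differential, which is standard for smooth involutions as used in Corollary \ref{cor:fixedpoint_groupoid}. Both nondegeneracy at the units, via the orthogonality argument, and the extension to the whole groupoid, via Proposition \ref{pro:nondegenerate} applied to each subgroupoid, are then routine.
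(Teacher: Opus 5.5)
Your proposal is correct and follows the paper's proof. You get the Lie subgroupoid structure from Corollaries \ref{cor:fixedpoint_groupoid} and \ref{cor:desired_groupoid}, and multiplicativity and closedness from Lemma \ref{lm:multiplicative}. Nondegeneracy then comes from $\sigma^*\omega_\bC=\omega_\bC$ and from $\theta^*\omega_2=\omega_2$, $\theta^*\omega_1=-\omega_1$, via the eigenspace-orthogonality argument that the paper leaves implicit in the phrase ``$\sigma$ is a holomorphic symplectomorphism''.

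The only minor variation is that you reduce nondegeneracy to the units via Proposition \ref{pro:nondegenerate}. So Theorem \ref{thm:main} alone would suffice for you, whereas the paper uses the global identities of Corollary \ref{cor:main} directly.
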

\begin{proof}
We have shown in Corollary \ref{cor:fixedpoint_groupoid} and Corollary \ref{cor:desired_groupoid} that $ \cZ^{\sigma} \rightrightarrows \Sigma^{\sigma_o}$, $ \cZ^{\theta} \rightrightarrows \Sigma^{\theta_o}$, and $(\cZ^{\sigma})^\theta \rightrightarrows (\Sigma^{\sigma_o})^{\theta_o}$ are all Lie groupoids. It remains to show that the pulled back 2-forms are symplectic and multiplicative. By Lemma \ref{lm:multiplicative}, $j_1^*\omega_\bC$, $j_2^*\omega_\bC$ and $k^*\omega_\bC$ are multiplicative and closed because $(j_1,(j_1)_o)$, $(j_2,(j_2))_o)$, and $(k,k_o)$ are all Lie subgroupoid morphisms. The 2-form $j_1^*\omega_\bC$ is holomorphic sympletic because $\sigma$ is a holomorphic symplectomorphism. Since $\omega_\bC=\omega_1+\sqrt{-1}\omega_2$, $\theta^*\omega_\bC=-\overline{\omega}_\bC$ implies $j_2^*\omega_1=0$ and $j_2^*\omega_2=-\sqrt{-1}j_2^*\omega_\bC$ is a real symplectic form on $\cZ^\theta$. Since the groupoid morphisims $\theta$ and $\sigma$ commute,  $(\theta|_{\cZ^\sigma})^*(j_1^*\omega_\bC)=-\overline{j_1^*\omega_\bC}$ and since the inclusion $k$ can be viewed as the composition map $(j_1|_{\cZ^\sigma})\circ (j_2|_{(\cZ^\sigma)^\theta})$, we have that $k^*\omega_2=-\sqrt{-1}k^*\omega_\bC$ is a real symplectic form on $(\cZ^\theta)^\sigma$.
\end{proof}

\begin{cor}\label{cor:result}
The space of all local solutions $S^{\text{local}}_{n+1}$ is a real symplectic groupoid over the Stokes data $\cM_{n+1}^{\text{local}}$. 
\qed
\end{cor}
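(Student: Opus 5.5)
The corollary should follow from Theorem \ref{thm:result} once we identify $\cS_{n+1}^{\text{local}}$ and $\cM_{n+1}^{\text{local}}$ with the doubly fixed groupoid and its base. The plan is first to recall from Section \ref{uc} that, by Definition \ref{thespace} and Definition \ref{morphism}, the space of monodromy data is exactly
\[
\cS_{n+1}^{\text{local}}=\cZ_{n+1}^{\bsi}\cap\cZ_{n+1}^{\bth}=(\cZ^{\sigma})^{\theta},
\]
with $\Sigma=\cM_{n+1}$. A pair $(B,A)$ is fixed by $\bsi$ precisely when $A=\Ad_{F_\si(A)}A^{-T}$ and $B=\Ad_{F_\si(A)}B^{-T}$. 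These are condition (i) of Proposition \ref{basesymmetries} and condition (i) of Proposition \ref{fibresymmetries}. The $\bth$-fixed condition gives the two conditions (ii) in the same way.

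Next we identify the base. We need $(\Sigma^{\sigma_0})^{\theta_0}=\cM_{n+1}^{\text{local}}$. By Theorem \ref{preservesM}, $\bsi_0$ acts on Stokes parameters by $(s_1,\dots,s_n)\mapsto(s_n,\dots,s_1)$, and $\bth_0$ acts by $(s_i)\mapsto(\bar s_{n-i+1})$. Their common fixed points are therefore exactly the $s$ with $s_i=s_{n-i+1}$ and $s_i=\bar s_{n-i+1}=\bar s_i$. This is the set of real $s$ satisfying the symmetry, which is Definition \ref{scriptMlocal}. We must also check that the source and target maps of $(\cZ^\sigma)^\theta$ land in this base. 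That holds because $s=t$ is the projection $(B,A)\mapsto A$, and fixed points of $(\bsi,\bth)$ project to fixed points of $(\bsi_0,\bth_0)$. Surjectivity onto the base is clear, since $(I,A)$ is fixed whenever $A$ is.

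With these identifications in place, Corollary \ref{cor:desired_groupoid} shows that $\cS_{n+1}^{\text{local}}\rightrightarrows\cM_{n+1}^{\text{local}}$ is a Lie subgroupoid of the Steinberg groupoid. Theorem \ref{thm:result} then shows that $k^*\omega_2$ is a real multiplicative symplectic form on it, which is the claim.

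The only step needing care is the smoothness of the fixed-point sets, which Corollary \ref{cor:fixedpoint_groupoid} asserts via the involution property. Theorem \ref{comminv} supplies the involution and commuting properties. I would double-check that the fixed locus of an antiholomorphic involution composed with a commuting holomorphic one is a manifold of half real dimension. If the fixed locus were empty or non-manifold anywhere, the claim would fail. I expect this to be routine, using the local linearization of a finite group action on a smooth manifold, and I anticipate no real obstacle.
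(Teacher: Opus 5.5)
Your proposal is correct and follows the paper's route. The corollary is an immediate consequence of Theorem \ref{thm:result} applied to $(\cZ^{\sigma})^{\theta}\rightrightarrows(\Sigma^{\sigma_0})^{\theta_0}$, combined with the identification $\cS_{n+1}^{\text{local}}=\cZ_{n+1}^{\bsi,\bth}$ from Section \ref{uc}. Your explicit check via Theorem \ref{preservesM} that $(\Sigma^{\sigma_0})^{\theta_0}=\cM_{n+1}^{\text{local}}$ fills in a step the paper leaves implicit, and the smoothness concern you raise is already covered by Corollary \ref{cor:fixedpoint_groupoid}.
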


\end{document}